\newtheorem{theorem}{Theorem}
\newtheorem{Hypotheses (H)}{Hypotheses (H)}
\newtheorem{definition}{Definition}
\newtheorem{proposition}{Proposition}
\newtheorem{corollary}{Corollary}
\newtheorem{lemma}{Lemma}
\newtheorem{remark}{Remark}
\newtheorem{example}{Example}
\newtheorem{prop}{Proposition}
\def\neweq#1{\begin{equation}\label{#1}}
\def\endeq{\end{equation}}
\def\phi{\varphi}
\def\RR{{\mathbb R} }
\def\ri{\rightarrow}
\date{}
\def\RR{{\mathbb R} }
\def\ri{\rightarrow}
\def\pp{\overrightarrow{p}(\cdot) }
\def\qq{\overrightarrow{q}(\cdot) }
\def\rr{\overrightarrow{r}(\cdot) }
\def\ss{\overrightarrow{s}(\cdot) }
\def\pxti{\partial_{x_{\tau_j}}}
\def\pxi{\partial_{x_i}}
\def\vpq{\mathcal{V}_{\pp,\qq}}
\def\vpqx{\mathcal{V}_{\pp,\qq}}
\def\vpqb{\mathbf{V}_{\pp,\qq}}
\def\vrsx{\mathcal{V}_{\rr,\ss}}
\def\vrsb{\mathbf{V}_{\rr,\ss}}
\def\fun(#1,#2,#3){\mathcal{E}_{_{#1}}(#2, #3)}
\def\sob(#1,#2){W^{1,#1}(#2)}
\def\forma(#1){\fun({\pp,\qq},\cdot,\cdot)}
\def\l2{L^2(\Omega)}
\def\funn(#1,#2,#3,#4,#5){\langle J_{#1} #2, #3 \rangle_{_{#4,#5}}}
\def\nn(#1,#2){\|#1\|_{_{#2}}}
\newcounter{appendix}
\newenvironment{apequation}{
\addtocounter{equation}{-1}
\refstepcounter{appendix}

\begin{equation}}
{\end{equation}}
\begin{document}

\title[Anisotropic dynamical Wentzell heat equation with nonstandard growth]
{The generalized anisotropic dynamical Wentzell heat equation with nonstandard growth conditions}

\author[C. Carvajal\,-\,Ariza,\,\,\,J. Henr\'iquez\,-\,Amador,\,\,\,A. V\' elez\,-\,Santiago]
{Carlos Carvajal\,-\,Ariza,\,\,\,J. Henr\'iquez\,-\,Amador,\,\,\,Alejandro V\' elez\,-\,Santiago}

\address{Carlos Carvajal\,-\,Ariza\hfill\break
Department of Mathematics\\
University of Puerto Rico at R\'io Piedras\\
R\'io Piedras, PR \,00936}
\email{carlos.carvajal@upr.edu}

\address{Javier Henr\'iquez\,-\,Amador\hfill\break
Departamento de Ciencias Naturales y Exactas\\
Universidad de la Costa\\
Barranquilla, Atl\'antico, Colombia \,080002}
\email{jhenriqu13@cuc.edu.co}

\address{Alejandro V\'elez\,-\,Santiago\hfill\break
Department of Mathematical Sciences\\
University of Puerto Rico at Mayag\"uez\\
Mayag\"uez, PR \,00681}
\email{alejandro.velez2@upr.edu,\,\,\,alejovelez32@gmail.com}

\subjclass[2020]{35K05, 35K59, 35K92, 35D35, 35B65}
\keywords{Anisotropic heat equation, Wentzell boundary conditions, Nonstandard growth conditions, nonlinear $C_0$-semigroups,
Ultracontractivity properties}

\numberwithin{equation}{section}

\begin{abstract}
Let $\Omega\subseteq\mathbb{R\!}^N$ be a bounded Lipschitz domain with the anisotropic extension property, for $N\geq3$.
The aim of this paper is to establish the solvability and global regularity theory for a new class
of generalized anisotropic heat-type boundary value problems involving the anisotropic $\overset{\rightarrow}p(\cdot)$-Laplace operator
$\Delta_{\overset{\rightarrow}p(\cdot)}$, with (pure) dynamical anisotropic Wentzell boundary conditions
$$u_t+\displaystyle\sum_{i=1}^N |\partial_{x_i}u|^{p_i(\cdot)-2}\partial_{x_i}u\; \nu_{i}-\Delta_{_{\overset{\rightarrow}q(\cdot),\Gamma}}u+
\beta|u|^{q_M(\cdot)-2}u\,\ni\,0.$$
Here $\overset{\rightarrow}p(\cdot)$ and $\overset{\rightarrow}q(\cdot)$ are Lipschitz continuous vector fields, which can be unrelated between each other,
and $\Delta_{_{\overset{\rightarrow}q(\cdot),\Gamma}}$ represents the anisotropic $\overset{\rightarrow}q(\cdot)$-Laplace-Beltrami operator
acting on $\Gamma:=\partial\Omega$. We first prove that the operator $\Delta_{\overset{\rightarrow}p(\cdot)}$ with the above boundary conditions
generates a nonlinear order-preserving submarkovian $C_0$-semigroup $\{T_{\sigma}(t)\}$ over $\mathbb{X\!}^{\,r(\cdot)}(\overline{\Omega}):=L^{r(\cdot)}(\Omega)
\times L^{r(\cdot)}(\Gamma)$ for all measurable functions $r(\cdot)$ on $\overline{\Omega}$ with $1\leq r^-\leq r^+<\infty$. Consequently, the
corresponding anisotropic dynamical Wentzell problem is well-posed over $\mathbb{X\!}^{\,r(\cdot)}(\overline{\Omega})$. Furthermore, we show
that the nonlinear $C_0$-semigroup $\{T_{\sigma}(t)\}$ enjoys a H\"older-type ultracontractivity property in the sense that
there exist constants $C_1,\,C_2,\,\kappa>0$, and $\gamma\in(0,1)$, such that
$$|\|T_{\sigma}(t)\mathbf{u}_0-T_{\sigma}(t)\mathbf{v}_0\||_{_{\infty}}\,\leq\,
C_1\,e^{C_2t}t^{-\kappa}|\|\mathbf{u}_0-\mathbf{v}_0\||^{\gamma}_{_{r(\cdot),s(\cdot)}}$$
for every $\mathbf{u}_0,\,\mathbf{v}_0\in\mathbb{X\!}^{\,r(\cdot),s(\cdot)}(\overline{\Omega})$ and for all $t>0$, where
$r:\Omega\rightarrow[2,\infty]$ and $s:\Gamma\rightarrow[2,\infty]$ are measurable functions. We will provide explicit
formulations for the constants $\kappa$ and $\gamma$, and will discuss a particular case in which $\gamma\equiv1$.
\end{abstract}
\maketitle

\section{Introduction}\label{sec1}

\indent During the last years, the subject of variable exponent differential equations and problems with nonstandard growth structure
has drawn a lot of attention in the mathematical world (e.g. \cite{RadRep}), motivated by their abstract mathematical formulations, extensions, and beauty,
but even more, by their multiple types of applications. Among the list of applications,
we have elastic materials \cite{ANS,z0},  image restoration
\cite{ABOU-MESK-SOU08,C-G-L-Z08,CHEN-LEV-RAO06,D-G-H-Y-Z-Z}, electrorheological fluids \cite{ACER-MING02,DIE2002, DIE-RUZ03, RUZ04, RUZ00, SobSimmonds, Sob5},
thermorheological fluids
\cite{AnRo}, mathematical biology \cite{frag}, dielectric breakdown, and electrical
resistivity and polycrystal plasticity \cite{BMP} (among others). Thus one can say that allowing variations in an exponent $p$ of a function space
or a differential equation produce a big wave of interest, which keeps growing as the theory continues being mastered and applied to particular models.\\
\indent Adding anisotropy to the exponents opens the opportunity for an increase in the scale of applications, but it also adds major complications,
and these models are less known, and less investigated. Here we treat problems that are cast in the framework of the anisotropic nonstandard structures
given by vectorial functions
$$\overset{\rightarrow}{r}:D\rightarrow[1,\infty]^d;\,\,\,\,\,\,\,\,\,\,\,\,\,\,
\overset{\rightarrow}r(\cdot):=\left(r_1(\cdot),r_2(\cdot),\dots,r_d(\cdot)\right),$$
where $r_i:D\rightarrow[1,\infty]$ are measurable functions ($i\in\{1,\ldots,d\}$). We then denote
\begin{equation}\label{pM si pm}
r_M(\cdot)=\max\{r_1(\cdot), \ldots,r_d(\cdot)\},\quad r_m(\cdot)=\min\{r_1(\cdot),\ldots,r_d(\cdot)\}.
\end{equation}
Before we amplify the discussion about anisotropic function spaces and differentia equations and their importance in the field, let us
formulate explicitly the problem under consideration in this paper.\\
\indent Let $\Omega\subseteq\mathbb{R\!}^N$ be a bounded domain with the (anisotropic) $W^{1,\overset{\rightarrow} p(\cdot)}$-extension property
(see Definition \ref{p-extension}), and with Lipschitz boundary
$\Gamma:=\partial\Omega$, for $N\geq3$, and let $p_i(\cdot),\,q_j(\cdot)$ be
Lipschitz continuous functions over $\overline{\Omega}$, such that $1<p^-_i:=\textrm{ess}\displaystyle
\inf_{\overline{\Omega}}p_i(x)\leq p^+_i:=\textrm{ess}\displaystyle\sup_{\overline{\Omega}}p_i(x)<\infty$ and $1<
q^-_j:=\textrm{ess}\displaystyle\inf_{\overline{\Omega}}q_j(x)\leq q^+_j:=\textrm{ess}\displaystyle\sup_{\overline{\Omega}}q_j(x)<\infty$,
for each $i\in\{1,\ldots,N\}$ and $j\in\{1,\ldots,N-1\}$. Since $\Omega$ is bounded and has Lipschitz boundary, we stress that $\Gamma$
becomes a $(N-1)$-dimensional compact Riemannian manifold; this plays a crucial role.
We are concerned with the solvability of the generalized quasi-linear parabolic equation formally given by\\
\begin{equation}\label{1.1.03}
\frac{\partial u(t,x)}{\partial t}\,=\,\Delta_{\overset{\rightarrow}p(\cdot)}u(t,x)-\alpha|u(t,x)|^{p_M(\cdot)-2}u(t,x),\,\,\,\,\,\,\textrm{for each}
\,\,\,(t,x)\in(0,\infty)\times\Omega,
\end{equation}
\indent\\
with an initial condition $u(0,x)=u_0(x)$ for $x\in\overline{\Omega}$, and dynamical Wentzell boundary conditions\\
\begin{equation}\label{1.1.04}
\frac{\partial u(t,x)}{\partial t}|_{_{\Gamma}}+\displaystyle\sum_{i=1}^N |\partial_{x_i}u(t,x)|^{p_i(\cdot)-2}\partial_{x_i}u(t,x)\; \nu_{i}
-\Delta_{_{\overset{\rightarrow}q(\cdot),\Gamma}}u(t,x)+\beta|u(t,x)|^{q_M(\cdot)-2}u(t,x)\,=\,0,\,\,\,\,\,(t,x)\in(0,\infty)\times\Gamma,
\end{equation}
\indent\\
for $\alpha\in L^{\infty}(\Omega)$ and $\beta\in L^{\infty}(\Gamma)$ with $\displaystyle\inf_{x\in\Omega}\alpha(x)\geq\alpha_0$
and $\displaystyle\inf_{x\in\Gamma}\beta(x)\geq\beta_0$ for some constants $\alpha_0,\,\beta_0>0$, for
$\nu_i$ the $i$-th component of the outward unit normal at $\Gamma$, where
$$\Delta_{\overset{\rightarrow}p(\cdot)}u:=\displaystyle\sum_{i=1}^N\displaystyle\partial_{x_i}
\left(|\partial_{x_i}u|^{p_i(\cdot)-2}\partial_{x_i}u \right)$$ denotes the (anisotropic) $\overset{\rightarrow}p(\cdot)$-Laplace operator,
and $$\Delta_{_{\overset{\rightarrow}q(\cdot),\Gamma}}u:=\displaystyle\sum_{j=1}^{N-1}\displaystyle\pxti
\left(|\pxti u|^{q_j(\cdot)-2}\pxti u \right)$$ represents the (anisotropic)
$\overset{\rightarrow}q(\cdot)$-Laplace-Beltrami operator, where
$\pxti u$ denotes the directional derivative of $u$ along the tangential directions $\tau_j$ at each point on $\Gamma$.\\
\indent The above model equation (\ref{1.1.03}) with boundary conditions (\ref{1.1.04}) is regarded as an {\it anisotropic partial differential equation}, since
the growth of the operator with respect to each $i$-th partial derivative is governed by
different powers (see for example \cite[Chapter 1, Section 4.2]{ANT-DIAZ-SHM02}). As pointed out in \cite{B-D-F-G-M},
very often, in many relevant applications, the materials and phenomena have an important
{\it anisotropy}, presenting different properties in different directions, in contrast to the more usual isotropy property. Anisotropy leads to
mathematical models presenting peculiar constitutive laws which correspond to material's physical or mechanical
properties with different behaviour according the directions. The spectrum of fields in which such situations arise is
broad; it integrates areas, such as: computer science (e.g. image processing), physics (e.g. atmospheric radiative transfer), chemistry (e.g.
materials science \cite{NEWNH05}), geophysics and geology, engineering (e.g. wastewater reactors \cite{DIAZ-GOM16}), or neuroscience (among others).
Commonly, this anisotropy comes as a diffusion operator described by three principal different directional
coefficients. This is the case, for example, of thermal and electrical conductivity in heterogeneous
media, or of the study of crystals (see \cite{FONS91}). Moreover, many of the recent innovations
in the field of electroceramics have exploited the anisotropy of nonlinearities modelling different material properties
such as electric field, mechanical stress or temperature (see for example \cite[Chapter 15]{NEWNH05}). We also mention here that it is
well-known that homogenization techniques generate anisotropic diffusion limit problems (e.g. \cite{LIONS-SOUG05,ZHIK99}).\\
\indent The study of anisotropic boundary value problems involving variable exponents is less known in the mathematical community.
Some existence and regularity results for particular classes of Neumann-type or Robin-type anisotropic elliptic problem (with variable exponents) on
smooth domains can be found in \cite{NABR,ELLA-HACH2017,FanNoDEA2010,HEN-AVS19-1,IBRA-OUA2015}. However, the realization of the corresponding
parabolic problem with nonstandard anisotropic structure has been considered mainly for the Dirichlet problem; the realization
of a Robin problem involving nonstandard anisotropic structure has been developed in \cite{BOU-AVS18}, but the dynamical
problem has not been investigated, up to the present time.\\
\indent On the other hand, Wentzell boundary conditions (also referred as pure Wentzell boundary conditions), are viewed as a
kind of ``dynamical" boundary condition, and the physical interpretation
of this kind of boundary condition was given by Goldstein \cite{GOL06}. Moreover, Wentzell boundary conditions arise in many applications, such as
phase-transition phenomena, fluid diffusion, heat flow subject to nonlinear cooling on the boundary, suspended transport energy,
fermentation, population dynamics, and climatology, among many others (see \cite{ARON-WIN78,DIAZ-TEL08,EVANS76-77,GAL-GRAS-MIR08,GAL-MIR09,PAO92}
and the references therein), and have been investigated and used by several authors
(e.g. \cite{AG-DOU59,PEE61,VAS-VIT11,VISH52,WAR12-2}, among others). However, the realization of Wentzell-type problems
of nonstandard growth structure is a new subject, first considered by V\'elez\,-\,Santiago \cite{VELEZ2012-3}.\\
\indent In the present paper, we deal with a general class of time-dependent Wentzell-type evolution equations involving the anisotropic Laplace operator
$\Delta_{\overset{\rightarrow}p(\cdot)}$ in the interior of the region, while its boundary is governed by a second-order anisotropic Laplace-Beltrami
operator $\Delta_{_{\overset{\rightarrow}q(\cdot),\Gamma}}$. Our goal consists in establishing the well-posedness and ultracontractivity
bounds for the parabolic problem (\ref{1.1.03}) with dynamical boundary conditions
(\ref{1.1.04}). In fact, we first show that for each measurable function $r(\cdot)$ over $\overline{\Omega}$ with $1\leq r^-\leq r^+<\infty$,
the subdifferential associated with the Wentzell functional $\Phi_{\sigma}$ (see Section \ref{sec3})
generates an order-preserving non-expansive nonlinear $C_0$-semigroup $\{T_{\sigma}(t)\}$
over $\mathbb{X\!}^{\,r(\cdot)}(\overline{\Omega}):=L^{r(\cdot)}(\Omega)\times L^{r(\cdot)}(\Gamma)$. Consequently, for each
$\mathbf{u}_0:=(u_0,u_0|_{_{\Gamma}})\in\mathbb{X\!}^{\,r(\cdot)}(\overline{\Omega})$, the function $\mathbf{u}(t):=T_{\sigma}(t)\mathbf{u}_0$
turns out to be the unique strong solution to problem (\ref{1.1.03}) with boundary conditions (\ref{1.1.04}). Then, under the additional assumption
$\min\{p^-_m,q^-_m\}\geq2$, we prove that the nonlinear Wentzell semigroup $\{T_{\sigma}(t)\}$ satisfies a H\"older-type ultracontractivity
property, in the sense that there exist constants $C_1,\,C_2,\,\kappa>0$, and $\gamma\in(0,1]$, such that
\begin{equation}\label{HUP}
|\|T_{\sigma}(t)\mathbf{u}_0-T_{\sigma}(t)\mathbf{v}_0\||_{_{\infty}}\,\leq\,
C_1\,e^{C_2t}t^{-\kappa}|\|\mathbf{u}_0-\mathbf{v}_0\||^{\gamma}_{_{r(\cdot),s(\cdot)}}
\end{equation}
for each $\mathbf{u}_0,\,\mathbf{v}_0\in\mathbb{X\!}^{\,r(\cdot),s(\cdot)}(\overline{\Omega})$ and for all $t>0$, where
$r:\Omega\rightarrow[2,\infty]$ and $s:\Gamma\rightarrow[2,\infty]$ are measurable functions. The constants $C_1,\,C_2$, $\kappa$, and $\gamma$,
have been explicitly calculated. In particular, the ultracontractivity property shows that solutions of problem (\ref{1.1.03}) with
boundary conditions (\ref{1.1.04}) are globally bounded.\\
\indent The above general model equation brings several important novelties. First, problems
involving the anisotropic Laplace-Beltrami operator correspond to a completely new subject in the field. In fact, the appearance of such anisotropic operator
is only known in the recent work \cite{DIAZ-VELEZ20} (for an elliptic boundary value problem), but it has never appeared in a parabolic-type
differential equation. The framework in the present paper not only motivates the study of anisotropic problems over the interior of a domain,
but also the realization of anisotropic parabolic-type differential equations over Riemannian manifolds. This is a subject never considered
before, up to the present time.\\
\indent H\"older-type ultracontractivity results for nonlinear semigroups have been established in \cite{CIP-GR01,CIP-GR02,WAR09}
(for the constant isotropic case $p=q$),
where elegant methods have been developed, which rely on logarithmic Sobolev inequalities (a subject motivated by the work of Davies \cite{DAV}).
In \cite{BOU-AVS18}, the authors presented for the first time ultracontracitvity results for a class of anisotropic Robin problems with variable exponents.
However, the authors found a mistake in \cite[proof of Lemma 10]{BOU-AVS18} (which allowed for the remaining of the proofs to run as in
the previous cases in the literature), which has happened as a consequence of absence of a positive constant at the last
inequality of \cite[proof of Lemma 10]{BOU-AVS18}. It turns out that such missing positive constant may bring complications in the subsequent results,
so one needs a way to avoid it, or try another approach. Fortunately, in our present situation we
were able to find the way to correct this mistake, by keeping the problem in the anisotropic structure
(instead of reducing it to $p_m(\cdot)$-gradient term; see the last inequality of \cite[proof of Lemma 10]{BOU-AVS18}), which complicates
substantially several subsequent calculations. Nevertheless, we were able to develop a technical procedure to deal with the anisotropy
and furthermore, the fact that we have two (possibly independent) vector fields exponents acting on the interior and boundary, respectively.
A more detailed explanation is provided in Remark \ref{JDE}.
In \cite{VELEZ2012-3}, the results in \cite{WAR09} were extended to the variable exponent isotropic case $p(\cdot)=q(\cdot)$, by establishing
(\ref{HUP}); however, the explicit value of the constants in (\ref{HUP}) (with the exception of $\gamma$) were not achieved, and in addition
some key typos in the proofs of key lemmas in \cite{VELEZ2012-3} were corrected.
It is also important to outline the works in \cite{BON-CIP-GR03,BON-GR05}, where the authors developed H\"older-type ultracontractivity bounds for
nonlinear semigroups over compact Riemannian manifolds. However, ultracontractivity properties associated with semigroups generated by nonlinear
anisotropic-type operators have never been considered before (over Riemannian manifolds), even in the constant case. This will be an important result which we
will investigate in this paper. Furthermore, we will obtain ultracontractivity properties, not only on the interior of the domain, but
also at the boundary, which in our case is regarded as a compact Riemannian manifold. This opens the idea of having a $C_0$-semigroup $\{T_{\sigma}(t)\}$
over $\mathbb{X\!}^{\,r(\cdot)}(\overline{\Omega})$ associated with problem (\ref{1.1.03}) with boundary conditions (\ref{1.1.04}), which could
behave as an order pair $$T_{\sigma}(t)=\left(T_{\overset{\rightarrow}p(\cdot)}(t),T_{\overset{\rightarrow}q(\cdot)}(t)\right),$$
where $\{T_{\overset{\rightarrow}p(\cdot)}(t)\}$ is a semigroup associated with a differential equation involving the
$\overset{\rightarrow}p(\cdot)$-Laplace operator $\Delta_{\overset{\rightarrow}p(\cdot)}$, and $\{T_{\overset{\rightarrow}q(\cdot)}(t)\}$
is a semigroup associated with a differential equation (over a Riemannian manifold) involving the $\overset{\rightarrow}q(\cdot)$-Laplace-Beltrami operator $\Delta_{_{\overset{\rightarrow}q(\cdot),\Gamma}}$. The validity of the above
equality is unknown, up for the present time; however, we will show that the Wentzell semigroup generated in this paper will possess some properties exhibiting a sort
of ``splitting" behavior into interior and boundary operators.\\
\indent Another important point to consider, is the fact that the boundary value problem (\ref{1.1.03}) with (\ref{1.1.04}) deals with multiple variables, in
the sense that it contains an interior vector field $\overset{\rightarrow}p(\cdot)$, and a boundary vector field $\overset{\rightarrow}q(\cdot)$. These two
vectorial functions are in general not equal, and unrelated each other. We comment that Gal and Warma \cite{GAL-WAR} were the first ones to consider a
Wentzell-type quasi-linear boundary value problem involving two (constant) exponents $p\neq q$, but in their case they only consider an elliptic-type
differential equation, in which $p\neq q$, but $q$ is related to $p$ in some way. This is no longer the case in the present situation. In
\cite{DIAZ-VELEZ20}, the authors recently generalized the results in \cite{GAL-WAR} to a class of elliptic anisotropic boundary value problems
where $\overset{\rightarrow}p(\cdot)\neq\overset{\rightarrow}q(\cdot)$ are unrelated each other. Here we turn our attention into the corresponding
time-dependent problem with dynamical boundary conditions, which has interest on its own, since the methods employed in the parabolic case are
completely different than the ones applied in the elliptic case.\\
\indent The organization of the paper is the following. Section \ref{sec2} is divided into five parts, in which we outline the main definitions, known tools,
and known results, which will be applied in the subsequent section. We will present an overview on all the machinery related to variable exponent isotropic
and anisotropic function spaces, relative capacity, and the corresponding embedding results.
Some useful analytical tools are provided, and at the end, we give an overview on nonlinear semigroups. Then, section \ref{sec3} concerns the
realization and global regularity theory for problem (\ref{1.1.03}) with boundary conditions (\ref{1.1.04}), and it is divided in two main parts.
In the first subsection, we show that the boundary value problem (\ref{1.1.03}) with (\ref{1.1.04}) is well-posed over
$\mathbb{X\!}^{\,r(\cdot)}(\overline{\Omega})$ for any measurable function $r:\overline{\Omega}\rightarrow[1,\infty)$ with $r^+<\infty$.
We also show that the associated Wentzell $C_0$-semigroup is order-preserving and non-expansive over $\mathbb{X\!}^{\,r(\cdot)}(\overline{\Omega})$.
In the second part, we establish an H\"older-type ultracontractivity result for the Wentzell semigroup generated in the first part. In particular,
we establish the validity of the inequality (\ref{HUP}), where the constants in (\ref{HUP}) are explicitly given. We also discuss a particular
case in which we get a Lipschitz ultracontractivity property, namely, the fulfillment of (\ref{HUP}) for $\gamma=1$. At the end, we add an Appendix A,
in which we derive the explicit values of the constants appearing in (\ref{HUP}).

\section{Preliminaries and intermediate results}\label{sec2}

\indent In this section we set up the notations, and collect the necessary intermediate results that will be frequently used throughout the
rest of the paper.\\

\subsection{Variable exponent function spaces}\label{subsec2.1}

Everywhere in this section, let $\Omega\subset \RR^N$, $N\geq2$, with $\text{meas}(\Omega)>0$, be a bounded open set and let us denote by
${\mathfrak{D}}$ either $\Omega$, or its boundary $\Gamma$, in order to uniformly recall definitions and properties.
Also,
$${\mathcal P}({\mathfrak{D}})=\{h:\mathfrak{D}\ri\RR  \mbox{ a
	measurable function such that }h^-\geq 1\}.$$
For $r\in\mathcal{P}({\mathfrak{D}})$  we define $\mathfrak{D}_{r,\infty}:=\{x\in\mathfrak{D}\mid r(x)=\infty\}$,
and $\mathfrak{D}_{r,0}:=\mathfrak{D}\setminus\mathfrak{D}_{\infty}$. Then, for a measure $\eta$ on $\mathfrak{D}$,
we define the Lebesgue space with variable exponent,
$$L^{r(\cdot)}({\mathfrak{D}}):=L^{r(\cdot)}({\mathfrak{D}},d\eta):=\left\{u:\mathfrak{D}\ri\RR  \mbox{ a
	measurable function }\mid
\Theta_{_{r(\cdot),\mathfrak{D}}}(u)<\infty\right\},$$
where the functional $\Theta_{_{r(\cdot),\mathfrak{D}}}:L^{r(\cdot)}({\mathfrak{D}})\rightarrow\RR$ is defined as
\begin{equation}\label{p-functional}
\Theta_{_{r(\cdot),\mathfrak{D}}}(u):=\displaystyle\int_{\mathfrak{D}_{r,0}}|u|^{r(x)}\,d\eta+\|u\|_{_{\infty,\mathfrak{D}_{r,\infty}}}.
\end{equation}
$\Theta_{_{r(\cdot),\mathfrak{D}}}$ is called the $r(\cdot)$-modular of the $L^{r(\cdot)}({\mathfrak{D}})$ space.
We endow the space $L^{r(\cdot)}({\mathfrak{D}})$ with the {\it  Luxemburg norm}
\begin{equation}\nonumber\label{elnorm}
\|u\|_{_{r(\cdot),\mathfrak{D}}}=\inf\left\{\mu>0\mid\,\Theta_{_{p(\cdot),\mathfrak{D}}}(|u|/\mu)\,\leq\,1\right\}.
\end{equation}

The space $\left(L^{r(\cdot)}({\mathfrak{D}}),\, \|\cdot\|_{_{r(\cdot),\mathfrak{D}}}
\right)$ is a Banach space (see \cite[Theorem 2.5]{KOV-RAK91}), which is reflexive whenever $1<r^-\leq r^+<\infty$ (see \cite[Corollary 2.7]{KOV-RAK91}).
For simplicity, in the case when $1<r^-\leq r^+<\infty$, we write $\mathfrak{D}_{r,0}=\mathfrak{D}$ and $\mathfrak{D}_{r,\infty}=\emptyset$.
In the subsequent sections, the $L^{r(\cdot)}$-spaces under consideration will be $L^{r(\cdot)}(\Omega):=L^{p(\cdot)}(\Omega,dx)$,
and $L^{r(\cdot)}(\Gamma):=L^{r(\cdot)}(\Gamma,d\sigma)$ (for $\sigma$ the surface measure on $\Gamma:=\partial\Omega$).
Also, for $r, \,s\in\mathcal{P}({\mathfrak{D}})$ we set
$$\mathbb{X\!}^{\,r(\cdot),s(\cdot)}(\overline{\Omega}):=L^{r(\cdot)}({\Omega})\times L^{s(\cdot)}({\Gamma}),$$
endowed with the norm
$$|\|(f,g)\||_{r(\cdot),s(\cdot)}:=\|f\|_{_{r(\cdot),\Omega}}+\|g\|_{_{s(\cdot),\Gamma}}\,,
\indent\,\textrm{if}\,\,\,r,\,s\in[1,\infty),$$
and, for $r=s=\infty$, we endow $\mathbb{X\!}^{\,\infty,\infty}(\overline{\Omega})$ with
$$|\|(f,g)\||_{\infty}:=
\max\left\{\|f\|_{_{\infty,\Omega}},\,\|g\|_{_{\infty,\Gamma}}\right\}.$$
If $r(\cdot)=s(\cdot)$ over $\overline{\Omega}$, we write $\mathbb{X\!}^{\,r(\cdot),s(\cdot)}(\overline{\Omega})=\mathbb{X\!}^{\,r(\cdot)}(\overline{\Omega})$
and $|\|(\cdot,\cdot)\||_{r(\cdot),s(\cdot)}=|\|(\cdot,\cdot)\||_{r(\cdot)}$. Also, if the trace $u|_{_{\Gamma}}$ of $u$ exists, we
will denote $\mathbf{u}:=(u,u|_{_{\Gamma}})$. More properties concerning the Lebesgue spaces are recalled below.
For $r\in L^\infty({\mathfrak{D}})$, the following H\"older-type
inequality
\begin{equation}\label{Hol}
\left|\int_{\mathfrak{D}} u(x)v(x)\;d\eta\right|
\leq2\,\|u\|_{_{r(\cdot),\mathfrak{D}}}\|v\|_{_{r'(\cdot),\mathfrak{D}}}
\end{equation}
holds for all $u\in L^{r(\cdot)}({\mathfrak{D}})$ and $v\in
L^{r'(\cdot)}({\mathfrak{D}})$ (see \cite[Theorem 2.1]{KOV-RAK91}), where  $L^{r'(\cdot)}({\mathfrak{D}})$ represents the conjugate space of
$L^{r(\cdot)}({\mathfrak{D}})$, obtained by conjugating the exponent
pointwise, that is,  $1/r(x)+1/r'(x)=1$ (see \cite[Corollary 2.7]{KOV-RAK91}).

If $u\in L^{r(\cdot)}({\mathfrak{D}})$ and $r^+<\infty$ we have the following well-known properties:
\begin{equation}\label{L40}
\|u\|_{_{r(\cdot),\mathfrak{D}}}<1\;(=1;\,>1)\quad\mbox{if and only if}\quad\Theta_{_{r(\cdot),\mathfrak{D}}}(u) <1\;(=1;\,>1);
\end{equation}
\begin{equation}\label{L4}
\mbox{if}\quad\|u\|_{_{r(\cdot),\mathfrak{D}}}>1\quad\mbox{then}\quad\|u\|_{_{r(\cdot),\mathfrak{D}}}^{r^-}\leq\Theta_{_{r(\cdot),\mathfrak{D}}}(u)
\leq\|u\|_{_{r(\cdot),\mathfrak{D}}}^{r^+};
\end{equation}
\begin{equation}\label{L5}
\mbox{if}\quad\|u\|_{_{r(\cdot),\mathfrak{D}}}<1\quad\mbox{then}\quad\|u\|_{_{r(\cdot),\mathfrak{D}}}^{r^+}\leq \Theta_{_{r(\cdot),\mathfrak{D}}}(u)\leq\|u\|_{_{r(\cdot),\mathfrak{D}}}^{r^-};
\end{equation}
\begin{equation}\label{L06}
\|u\|_{_{r(\cdot),\mathfrak{D}}}\rightarrow
0\;(\rightarrow\infty)\quad\mbox{if and only if}\quad\Theta_{_{r(\cdot),\mathfrak{D}}} (u)\rightarrow 0\;(\rightarrow\infty);
\end{equation}
see for example \cite[Theorem 1.3, Theorem 1.4]{FAN01}.
\smallskip

Let us pass to the (isotropic) Sobolev space with variable exponent, that is,
$$W^{1,r(\cdot)}({\Omega})=\left\{u\in L^{r(\cdot)}({\Omega})\mid\,\pxi  u\in L^{r(\cdot)}(\Omega),\,\, i\in\{1,\dots,N\} \right\},$$
where $\pxi  u$,  $i\in\{1,\dots,N\}$, represent the partial derivatives of $u$ with respect to $x_i$ in the weak sense.
This space, endowed with the norm
\begin{equation}\label{norm isotropic}
\|u\|_{W^{1,r(\cdot)}({\Omega})}=\|u\|_{_{r(\cdot),\Omega}}+\|\nabla u\|_{_{r(\cdot),\Omega}},
\end{equation}
is a separable and reflexive Banach space
(see \cite[Theorem 3.1]{KOV-RAK91}) whenever $r\in {\mathcal P}({\Omega})$ with $1<r^-\leq r^+<\infty$.
For more classical properties of the variable exponent
Lebesgue and Sobolev spaces, refer to \cite{D-H-H-R11,FAN01,KOV-RAK91,MUS}, among others.

Next, we introduce the anisotropic space $W^{1,\rr}(\Omega)$, where $\rr:\Omega\longrightarrow \mathbb{R}^N$ is the vectorial function
\begin{align*}
	\rr = \left( r_1(\cdot),\cdots, r_N(\cdot)\right)
\end{align*}
and $r_i\in \mathcal{P}(\Omega)$ with $1<r_{i}^{-}\leq r_{i}^{+}<\infty$ for all $i\in\left\lbrace 1,\ldots,N\right\rbrace$. The \textbf{anisotropic variable exponent Sobolev space} is defined by
\begin{eqnarray}
W^{1,\rr}(\Omega)=\left\lbrace u\in L^{r_{M}(\cdot)}(\Omega)\mid \partial_{x_i}u\in L^{r_{i}(\cdot)}(\Omega)\quad\mbox{for all} \quad i\in \{1,\ldots ,N\} \right\rbrace
\end{eqnarray}
endowed with the norm
\begin{eqnarray}
\|u\|_{W^{1,\rr}(\Omega)} =  \|u\|_{_{r_M(\cdot),\Omega}}+\sum_{i=1}^{N}\|\partial_{x_i}u\|_{_{r_{i}(\cdot),\Omega}}.
\end{eqnarray}
\indent On the other hand, for $s\in\mathcal{P}(\Gamma)$,
we define the variable exponent Sobolev space at the boundary $W^{1,s(\cdot)}(\Gamma)$
as the completion of the space $C^1(\Gamma)$ with respect to the norm $$\|u\|_{_{W^{1,q(\cdot)}(\Gamma)}}:=
\inf\left\{\lambda>0\mid\Theta_{_{q(\cdot),\Gamma}}(u/\lambda)+\Theta_{_{q(\cdot),\Gamma}}(|\nabla_{^{\Gamma}}u|/\lambda)\leq1\right\},$$
where we recall that $\nabla_{^{\Gamma}}u:=(\partial_{x_{\tau_1}}u,\ldots,\partial_{x_{\tau_{N-1}}}u)$ denotes
the tangential gradient at $\Gamma:=\partial\Omega$ (also called the Riemannian gradient of $u$),
for $\partial_{x_{\tau_j}}u$ the directional derivative of $u$ along the tangential directions $\tau_j$ at each point on $\Gamma$.
For a treatment of this space for $q\in(1,\infty)$ constant, we refer to \cite{HEB96,JOST}. Then proceeding as above,
given $\ss:\Gamma\longrightarrow \mathbb{R}^N$ a measurable vectorial function, one can define
the corresponding anisotropic Sobolev space over $\Gamma$ by:
\begin{eqnarray}
W^{1,\ss}(\Gamma)=\left\lbrace u\in L^{s_{M}(\cdot)}(\Gamma)\mid\pxti u\in L^{s_{j}(\cdot)}(\Gamma)\quad\mbox{for all} \quad j\in \{1,\ldots ,N-1\} \right\rbrace
\end{eqnarray}
endowed with the norm
\begin{eqnarray}
\|u\|_{W^{1,\ss}(\Gamma)}:=\|u\|_{_{s_M(\cdot),\Gamma}}+\sum_{j=1}^{N-1}\|\pxti u\|_{_{s_{j}(\cdot),\Gamma}}.
\end{eqnarray}
Then one can deduce in the same way as in the variable exponent Lebesgue spaces that
$W^{1,\rr}(\Omega)$ and $W^{1,\ss}(\Gamma)$ are both Banach spaces, which are reflexive,
provided that $1<r^-_m\leq r^{+}_M<\infty$ and $1<s^-_m\leq s^{+}_M<\infty$ (e.g. \cite{fananis}). \\
\indent Now, we define the space\\
\begin{equation}\label{2.1.01}
\vrsx:=\left\{u\in W^{1,\rr}(\Omega)\mid u|_{_{\Gamma}}\in W^{1,\ss}(\Gamma)\right\}.
\end{equation}\indent\\
Then, one have that the space $\vrsx$ is a Banach space with respect to the norm\\
\begin{equation}\label{2.1.02}
\|\cdot\|_{_{\vrsx}}:=\|\cdot\|_{_{W^{1,\rr}(\Omega)}}+\|\cdot\|_{_{W^{1,\ss}(\Gamma)}}.
\end{equation}
Finally, we define our target function space by
\begin{equation}\label{Pair-Space}
\vrsb:=\left\{\mathbf{u}:=(u,u|_{_{\Gamma}})\mid u\in\vrsx\right\},\,\,\,\,\,\textrm{endowed with the norm:}\,\,\,\,\,
|\|\mathbf{u}\||_{_{\vrsb}}:=\|u\|_{_{\vrsx}}.
\end{equation}
\indent The next definitions will play a crucial role on identifying the class of domains under consideration in this paper.
We start by introducing to the mathematical literature the notion of anisotropic $p(\cdot)$-extension domains.\\

\begin{definition}\label{p-extension}
	Let $r_i\in C(\mathbb{R\!}^N\,)$, for $i\in\{1,\ldots,N\}$. A domain
	$\Omega\subseteq\mathbb{R\!}^N$ is called an \textbf{anisotropic $r(\cdot)$-extension domain},
	if $\Omega$ has the $W^{1,\overset{\rightarrow} r(\cdot)}$-\textbf{extension property},
	that is, if there exists a bounded linear operator
	$S:W^{1,\overset{\rightarrow} r(\cdot)}(\Omega)
	\rightarrow W^{1,\overset{\rightarrow} r(\cdot)}(\mathbb{R\!}^N)$ such that $Su=u$\, a.e. on $\Omega$.
	In the case when $r_i(\cdot)=r(\cdot)$ over $\mathbb{R\!}^N$ for all $i\in\{1,\ldots,N\}$, we
	call $\Omega$ a $r(\cdot)$-\textbf{extension domain}.\\
\end{definition}

There are many examples of domains satisfying the statement of Definition \ref{p-extension} (for the case when $r_1=r_2=\ldots=r_N=$\,constant; see for example
\cite[Example 6.7]{VELEZ2013-1}), and it is
well-known that Lipschitz domains are $r(\cdot)$-extension domains for all ``sufficiently smooth" $r(\cdot)$ with $1\leq r^-\leq r^+\leq\infty$.
However, in the present case, as we are dealing with
anisotropic Sobovev spaces, the situation becomes more critical, namely because for many domains, the extension operator may
``mix" the derivatives in such way that the extended function no more belongs to the required anisotropic Sobolev space
(see for instance \cite{monatsh}). So in order for a domain to be an anisotropic $r(\cdot)$-extension domain, we are assuming
that the situation previously mentioned does not occur, that is, we are requiring that the extended function may lie in the corresponding
anisotropic Sobolev space without going through the ``mixing" problem mentioned above. Consequently, the class of domains with the anisotropic property
in Definition \ref{p-extension} is smaller than the one considered in \cite{VELEZ2013-1}. Known examples of anisotropic $r(\cdot)$-extension domains include rectangular domains, rectangular-like domains, domains with the semirectangular restriction, and
admissible domains (in the sense of \cite{RAK81}); see for instance \cite{fananis, monatsh, RAK81}. In particular, it is unknown (up to the present time) whether
bounded Lipschitz domains are anisotropic $r(\cdot)$-extension domain, even for a smooth vector field $\overset{\rightarrow} p(\cdot)$. Thus throughout
the body of the paper, we will assume this condition (in addition to the preliminary assumption over the domain $\Omega$).

\begin{remark}\label{Equiv-Extension}
If $p_i\in C^{0,1}(\overline{\Omega})$, then it is known that each $p_i$ can be extended to a function $\mathfrak{p}_i\in C^{0,1}(\mathbb{R\!}^N)$
with $\mathfrak{p}^+_i=p^+_i$, and $\mathfrak{p}^-_i=p^-_i$ (in fact this holds even for log-H\"older continuous functions; see for instance \cite{DIE04}).
Then for such case, one can regard an anisotropic $p(\cdot)$-extension domain as an order pair $(\pp,\Omega)$ possessing an extension property in the following way:
\begin{enumerate}
\item[$\bullet$]\,\, $\pp=(p_1(\cdot),\ldots,p_{_N}(\cdot))$ can be extended to a vectorial function $\overset{\rightarrow}{\mathfrak{p}}(\cdot)=
(\mathfrak{p}_1(\cdot),\ldots,\mathfrak{p}_{_N}(\cdot))$.
\item[$\bullet$]\,\, There exists a bounded extension operator from $W^{1,\pp}(\Omega)$ into $W^{1,\overset{\rightarrow}{\mathfrak{p}}(\cdot)}(\mathbb{R\!}^N)$
such that the derivatives may not get mixed under the extension operator.
\end{enumerate}
In \cite{fananis, monatsh}, the authors show that a rectangular domain satisfies all the conditions in the formulation given in Remark \ref{Equiv-Extension}.
\end{remark}
\indent\\

\subsection{Sobolev embedding theorems, and inequalities}\label{subsec2.2}

\indent The following embedding results will be frequently applied in the subsequent sections. We recall that for now on we will assume
that $\Omega\subseteq\mathbb{R\!}^N$ is a bounded Lipschitz domain, for $N\geq2$. We begin by stating the following know results.\\

\begin{theorem}\,(see \cite{BOU-AVS18})\,\,\label{tembcont}
Let ${\Omega}\subset\mathbb{R\!}^N$ ($N\geq 2$) be a bounded $p_m(\cdot)$-extension domain, for
$p_m(\cdot)\in C^{0,1}(\overline{\Omega})$ with $1<p^-_m\leq p^+_m<N$,
and suppose that $\overset{\rightarrow} p(\cdot)\in C^{0,1}(\overline\Omega)^N$.
\begin{enumerate}
\item[(a)]\,\,\,If $r(\cdot)\in {\mathcal P}(\overline{{\Omega}})$ is such that $r^->1$, and
\begin{equation}\label{critical-I}
r(\cdot)\,\leq\,\displaystyle\frac{Np_m(\cdot)}{N-p_m(\cdot)}:=p^{\star}_m(\cdot)\,,\qquad\textrm{over}\,\,\overline{\Omega},
\end{equation}
then the interior embedding $ W^{1,\overset{\rightarrow} p(\cdot)}(\Omega)\hookrightarrow L^{r(\cdot)} (\Omega)$ is continuous. Moreover,
if there exists $\epsilon>0$ such that $$r(x)\,\leq\,p^{\ast}_m(\cdot)-\epsilon,\qquad\forall\, x\in \overline{\Omega},$$
then the embedding $W^{1,\overset{\rightarrow} p(\cdot)}(\Omega)\hookrightarrow L^{r(\cdot)} (\Omega)$ is compact.
\item[(b)]\,\,\,Assume now that the boundary $\Gamma$ of $\Omega$ is an upper $d$-set with respect to a measure $\mu$ on $\Gamma$,
for $d\in(N-p^-_m,N)$. If $s(\cdot)\in {\mathcal P}(\overline{{\Omega}})$ is such that $s^->1$, and
$$s(x)\,\leq\,\displaystyle\frac{dp_m(\cdot)}{N-p^-_m}\,,\qquad\forall\, x\in \overline{\Omega},$$
then the trace embedding $ W^{1,\overset{\rightarrow} p(\cdot)}(\Omega)\hookrightarrow L^{s(\cdot)} (\Gamma)$ is continuous. Furthermore,
if there exists $\epsilon>0$ such that $$s(x)\,\leq\,\displaystyle\frac{dp_m(\cdot)}{N-p^-_m}-\epsilon,\qquad\forall\, x\in \overline{\Omega},$$
Then the trace embedding $W^{1,\overset{\rightarrow} p(\cdot)}(\Omega)\hookrightarrow L^{s(\cdot)} (\Gamma)$ is compact.
\end{enumerate}
\end{theorem}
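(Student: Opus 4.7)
The plan is to reduce the anisotropic embeddings in (a) and (b) to their well-studied isotropic counterparts for $W^{1,p_m(\cdot)}(\Omega)$, exploiting that $p_m(\cdot)$ is the pointwise minimum of the components of $\overset{\rightarrow}p(\cdot)$. Since $p_m(x)\le p_i(x)$ and $p_m(x)\le p_M(x)$ for all $x\in\overline{\Omega}$, and since $\Omega$ is bounded, an elementary modular argument based on \eqref{L4}--\eqref{L06} together with H\"older's inequality yields the continuous inclusions $L^{p_i(\cdot)}(\Omega)\hookrightarrow L^{p_m(\cdot)}(\Omega)$ for every $i$, and $L^{p_M(\cdot)}(\Omega)\hookrightarrow L^{p_m(\cdot)}(\Omega)$. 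Substituting these into the norm of $W^{1,\overset{\rightarrow}p(\cdot)}(\Omega)$ gives a continuous embedding
\[
W^{1,\overset{\rightarrow}p(\cdot)}(\Omega)\hookrightarrow W^{1,p_m(\cdot)}(\Omega),
\]
and note that $p_m(\cdot)\in C^{0,1}(\overline{\Omega})$ automatically, since the minimum of finitely many Lipschitz functions is Lipschitz. Thus the hypotheses $1<p_m^-\le p_m^+<N$ and the $p_m(\cdot)$-extension property place us squarely in the classical variable-exponent isotropic setting.

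For part (a), after this reduction it suffices to prove $W^{1,p_m(\cdot)}(\Omega)\hookrightarrow L^{r(\cdot)}(\Omega)$ under the stated subcritical condition. I would follow the standard route: use the $p_m(\cdot)$-extension operator to pass from $\Omega$ to $\mathbb{R}^N$, smooth by mollification, estimate the extension by Riesz-type potentials, and close the argument using boundedness of the Hardy--Littlewood maximal operator on $L^{p_m(\cdot)}(\mathbb{R}^N)$ (available because Lipschitz regularity of $p_m(\cdot)$ implies the log-H\"older decay condition). This yields $W^{1,p_m(\cdot)}(\Omega)\hookrightarrow L^{p_m^{\star}(\cdot)}(\Omega)$ continuously, and the target statement for any $r(\cdot)\le p_m^{\star}(\cdot)$ follows by a further elementary inclusion of Lebesgue spaces on a bounded set. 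Compactness under the strict subcritical gap $r(\cdot)\le p_m^{\star}(\cdot)-\varepsilon$ is then obtained by interpolating the continuous critical embedding with the compact Rellich--Kondrachov embedding $W^{1,p_m^-}(\Omega)\hookrightarrow L^{p_m^-}(\Omega)$.

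For part (b), the same reduction applies, and it remains to produce the trace-type embedding $W^{1,p_m(\cdot)}(\Omega)\hookrightarrow L^{s(\cdot)}(\Gamma)$. The construction mirrors (a): extend $u$ to $\mathbb{R}^N$, mollify, and apply the variable-exponent version of the Maz'ya--Triebel trace theorem on upper $d$-sets; the hypothesis $d\in(N-p_m^-,N)$ is precisely what ensures the critical boundary exponent $dp_m(\cdot)/(N-p_m^-)$ is finite and exceeds $1$, so the trace is well-defined and bounded as a map into $L^{s(\cdot)}(\Gamma)$ whenever $s(\cdot)$ lies below this critical exponent. Compactness under strict inequality is again obtained by an interpolation argument using a Rellich-type boundary compactness at a reference exponent.

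The main obstacle, which is why the statement is quoted from \cite{BOU-AVS18} rather than reproved, lies in keeping two pieces of variable-exponent machinery compatible after the anisotropic-to-isotropic reduction: the $p_m(\cdot)$-extension property must be used to invoke the Euclidean maximal-function estimates, while simultaneously the upper $d$-set condition with $d>N-p_m^-$ must be enforced to get the boundary-trace inequality in a form compatible with the variable exponent $s(\cdot)$. Once one accepts the reduction step, no genuinely anisotropic embedding argument is required, and the proof proceeds by invoking the corresponding isotropic variable-exponent Sobolev and trace embeddings already available in the literature.
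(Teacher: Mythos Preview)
Your approach is correct and matches the standard argument: the paper does not give its own proof of this theorem but simply cites it from \cite{BOU-AVS18}, where the proof proceeds exactly via the reduction $W^{1,\overset{\rightarrow}p(\cdot)}(\Omega)\hookrightarrow W^{1,p_m(\cdot)}(\Omega)$ followed by the isotropic variable-exponent Sobolev and trace embeddings. Your outline of the isotropic steps (extension, maximal operator bounds under log-H\"older continuity, interpolation for compactness) is the right one, and your observation that $p_m(\cdot)$ is automatically Lipschitz as a minimum of Lipschitz functions is the only point one might add to make the reduction self-contained.
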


The next embedding result for the variable exponent case has been established over
general compact Riemannian manifolds by Gaczkowski and G\'orka \cite{GACZ-GORK13}. In the case of a bounded Lipschitz domain,
clearly $\Gamma:=\partial\Omega$ is a compact Riemannian manifold, and if $q\in C^{0,1}(\Gamma)$ with $1\leq q^-\leq q^+<\infty$,
a simpler proof has been developed in \cite{DIAZ-VELEZ20}.\\

\begin{theorem}\,(see \cite{DIAZ-VELEZ20,GACZ-GORK13})\,\,\label{q-b}
Given $N\geq3$ and $q\in C^{0,1}(\Gamma)$ with $1\leq q^-\leq q^+<N-1$,
let $\Omega\subseteq\mathbb{R\!}^N$ be a bounded domain with Lipschitz boundary $\Gamma$. Then there exists a linear continuous mapping
$W^{1,q(\cdot)}(\Gamma)\hookrightarrow L^{^{\frac{(N-1)q(\cdot)}{N-1-q(\cdot)}}}(\Gamma)$ and a constant $c_3>0$ such that\\
\begin{equation}\label{2.2.03}
\|u\|_{_{\frac{(N-1)q(\cdot)}{N-1-q(\cdot)},\Gamma}}\,\leq\,c_3\|u\|_{_{W^{1,q(\cdot)}(\Gamma)}},
\indent\,\textit{for all}\,\,\,u\in W^{1,q(\cdot)}(\Gamma).
\end{equation}\indent\\
{\it Also, if $s\in\mathcal{P}(\Gamma)$ fulfills $1\leq s(x)\leq (N-1)q(x)(N-1-q(x))^{-1}$ for all $x\in\Gamma$, then
the linear mapping $W^{1,q(\cdot)}(\Gamma)\hookrightarrow L^{^{\frac{(N-1)q(\cdot)}{N-1-q(\cdot)}}}(\Gamma)$ is continuous.
Furthermore, if $\tilde{q}\in\mathcal{P}(\Gamma)$ is such that}\\
$$q(x)\leq\tilde{q}(x)\,\,\,\textit{for a.e.}\,\,\,x\in\Gamma\indent\textit{and}\indent
\textrm{ess}\displaystyle\inf_{x\in\Gamma}\left\{\frac{(N-1)q(x)}{N-1-q(x)}-\tilde{q}(x)\right\}>0,$$\indent\\
{\it then the embedding $W^{1,q(\cdot)}(\Gamma)\hookrightarrow L^{\tilde{q}(\cdot)}(\Gamma)$ is compact}.
\end{theorem}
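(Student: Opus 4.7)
My plan is to exploit the fact that $\Gamma=\partial\Omega$ is a compact $(N-1)$-dimensional Lipschitz Riemannian manifold, so the statement should reduce, via local coordinates and a partition of unity, to the variable exponent Sobolev embedding on open subsets of $\mathbb{R}^{N-1}$. Concretely, since $\Gamma$ is compact Lipschitz, one can cover it by finitely many coordinate patches $\{U_k\}_{k=1}^K$ together with bi-Lipschitz charts $\varphi_k:U_k\to V_k\subset\mathbb{R}^{N-1}$, where each $V_k$ can be chosen to be a Lipschitz (even smooth) bounded domain. Subordinate to this cover one fixes a Lipschitz partition of unity $\{\eta_k\}$ with $\sum_k\eta_k\equiv1$ on $\Gamma$. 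In local coordinates the surface measure $d\sigma$ and the tangential gradient $\nabla_\Gamma$ are comparable to the Euclidean ones up to bounded multiplicative factors coming from the metric tensor (which is bounded above and below away from $0$ by Lipschitz regularity of $\Gamma$), so for $u\in W^{1,q(\cdot)}(\Gamma)$ the localized pullback $(\eta_k u)\circ\varphi_k^{-1}$ lies in $W^{1,q_k(\cdot)}_0(V_k)$, where $q_k:=q\circ\varphi_k^{-1}$ inherits the Lipschitz regularity of $q$.

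Next I would invoke the Euclidean variable exponent Sobolev embedding on $V_k\subset\mathbb{R}^{N-1}$: since $q_k$ is Lipschitz (in particular log-Hölder) and $1\leq q_k^-\leq q_k^+<N-1$, there is a continuous embedding $W^{1,q_k(\cdot)}(V_k)\hookrightarrow L^{q_k^\star(\cdot)}(V_k)$, with the Sobolev conjugate exponent in dimension $d=N-1$ given precisely by $q_k^\star(\cdot)=(N-1)q_k(\cdot)/(N-1-q_k(\cdot))$. This is the standard Diening-type embedding and is the analogue of part (a) of Theorem \ref{tembcont} but in dimension $N-1$. Pushing forward and using the comparability of measures yields
\[
\|\eta_k u\|_{_{(N-1)q(\cdot)/(N-1-q(\cdot)),\,\Gamma}}\,\leq\,C_k\,\|\eta_k u\|_{_{W^{1,q(\cdot)}(\Gamma)}}\,\leq\,C_k'\,\|u\|_{_{W^{1,q(\cdot)}(\Gamma)}},
\]
the second inequality by Leibniz together with the Lipschitz bound on $\eta_k$. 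Summing in $k$ and using the norm-modular relations \eqref{L4}--\eqref{L5} to pass from the modular sum $\sum_k\Theta_{_{(N-1)q(\cdot)/(N-1-q(\cdot)),\Gamma}}(\eta_k u)$ back to a single norm on $\Gamma$ gives the continuous embedding with a global constant $c_3$.

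For the monotonicity statement (the embedding into $L^{s(\cdot)}(\Gamma)$ for any $s\leq (N-1)q/(N-1-q)$ a.e.), one simply combines the critical embedding with the trivial inclusion $L^{\alpha(\cdot)}(\Gamma)\hookrightarrow L^{s(\cdot)}(\Gamma)$ valid for $s\leq\alpha$ a.e.\ on a finite measure space, which follows from Hölder's inequality \eqref{Hol} together with the fact that $\sigma(\Gamma)<\infty$. For the compactness of $W^{1,q(\cdot)}(\Gamma)\hookrightarrow L^{\tilde q(\cdot)}(\Gamma)$ under the strict gap $\text{ess\,inf}\{(N-1)q/(N-1-q)-\tilde q\}>0$, I would localize again through the charts and apply the compact variable exponent Rellich--Kondrachov theorem on each $V_k\subset\mathbb{R}^{N-1}$ (available whenever the target exponent is strictly subcritical); by a standard diagonal-subsequence argument across the finitely many charts one obtains a convergent subsequence in $L^{\tilde q(\cdot)}(\Gamma)$ from any bounded sequence in $W^{1,q(\cdot)}(\Gamma)$.

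The main obstacle I anticipate is bookkeeping rather than conceptual: one must verify carefully that the Lipschitz change of variables $\varphi_k$ preserves the log-Hölder regularity of the exponent and does not introduce coordinate-dependent constants that degenerate when summing the local estimates, and one must justify the passage between modular and norm estimates when the exponent $(N-1)q(\cdot)/(N-1-q(\cdot))$ is itself variable. Both issues are resolved by the compactness of $\Gamma$ (which yields a finite subcover with uniform Lipschitz constants) and by the modular inequalities \eqref{L40}--\eqref{L06}, so the argument should go through cleanly.
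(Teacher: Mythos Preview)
Your localization-via-charts argument is correct and is precisely the standard route to this result. Note, however, that the paper does not actually give a proof of this theorem: it is stated as a known result, with the general compact Riemannian manifold case attributed to Gaczkowski--G\'orka \cite{GACZ-GORK13} and a ``simpler proof'' for the Lipschitz-boundary case attributed to \cite{DIAZ-VELEZ20}. Your sketch --- finite bi-Lipschitz atlas, Lipschitz partition of unity, pullback to $W^{1,q_k(\cdot)}_0(V_k)\subset W^{1,q_k(\cdot)}(\mathbb{R}^{N-1})$, application of the Euclidean Diening-type embedding in dimension $N-1$, then summation and modular--norm conversion --- is exactly the ``simpler proof'' alluded to, and your identification of the two technical points (stability of log-H\"older regularity under bi-Lipschitz composition, and uniform control of the patching constants by compactness of $\Gamma$) is accurate. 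The compactness assertion via local Rellich--Kondrachov plus a finite diagonal extraction is likewise the standard argument.
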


Now, the anisotropic case follows easily by applying H\"older's inequality, together with Theorem \ref{q-b}.

\begin{theorem}(see \cite{DIAZ-VELEZ20})\label{turmanoua2cont}
	Let ${\Omega}\subset\mathbb{R\!}^N$ ($N\geq 3$) be a bounded $q_m(\cdot)$-extension domain and be a bounded domain with Lipschitz boundary $\Gamma$, for
	$q_i\in C(\Gamma)$ with $q_i^->1$ for all $i\in\{1,\dots,N-1\}$, with $q_m(\cdot)$ log-H\"older continuous
	over $\Gamma$ with $q_m^+<N-1$. If $q\in {\mathcal P}(\Gamma)$ with $q^->1$ satisfies the condition
	\begin{equation}\label{critical-b}
	q(\cdot) \,\leq\, \displaystyle\frac{(N-1)q_m(\cdot)}{N-1-q_m(\cdot)}:=q_m^\partial(\cdot),\qquad\textrm{over}\,\,\Gamma,
	\end{equation}
	then there is a continuous embedding
	$$W^{1,\overset{\rightarrow} q(\cdot)}(\Gamma) \hookrightarrow L^{q(\cdot)}
	({\Gamma}).$$
	Moreover, if $\tilde{q}\in\mathcal{P}(\Gamma)$ is such that
$$q(x)\leq\tilde{q}(x)\,\,\,\textit{for a.e.}\,\,\,x\in\overline{\Omega}\indent\textit{and}\indent
\textrm{ess}\displaystyle\inf_{x\in\Gamma}\left\{q^\partial(x)-\tilde{q}(x)\right\}>0,$$
then the embedding $W^{1,q(\cdot)}(\Gamma)\hookrightarrow L^{\tilde{q}(\cdot)}(\Gamma)$ is compact.
\end{theorem}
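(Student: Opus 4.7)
The plan is to reduce the anisotropic embedding on $\Gamma$ to the isotropic embedding of Theorem \ref{q-b} applied with the minimal exponent $q_m(\cdot)$, exactly as the text hints. The essential input is that $\Gamma$ has finite surface measure, so for any pair of variable exponents $a,b\in\mathcal{P}(\Gamma)$ with $a(\cdot)\le b(\cdot)$ on $\Gamma$, the H\"older inequality \eqref{Hol} applied to $|u|^{a(\cdot)}\cdot 1$ with conjugate exponents $b(\cdot)/a(\cdot)$ and $(b(\cdot)/a(\cdot))'$, together with the modular--norm relations \eqref{L40}--\eqref{L5}, yields a continuous inclusion $L^{b(\cdot)}(\Gamma)\hookrightarrow L^{a(\cdot)}(\Gamma)$ whose operator norm depends only on $|\Gamma|$, $a^\pm$, and $b^\pm$.

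First, I would establish the auxiliary continuous embedding $W^{1,\qq}(\Gamma)\hookrightarrow W^{1,q_m(\cdot)}(\Gamma)$. For $u\in W^{1,\qq}(\Gamma)$, the pointwise inequality $q_m(x)\le q_j(x)$ together with the preceding H\"older step gives $\pxti u\in L^{q_m(\cdot)}(\Gamma)$ for every $j\in\{1,\ldots,N-1\}$, with a norm bound controlled by $\|\pxti u\|_{_{q_j(\cdot),\Gamma}}$. Likewise $q_M(\cdot)\ge q_m(\cdot)$ forces $u\in L^{q_m(\cdot)}(\Gamma)$ with $\|u\|_{_{q_m(\cdot),\Gamma}}\lesssim\|u\|_{_{q_M(\cdot),\Gamma}}$. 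Summing these estimates produces the claimed continuous embedding.

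Next, since $q_m(\cdot)$ is log-H\"older continuous on $\Gamma$ with $1<q_m^-\le q_m^+<N-1$, Theorem \ref{q-b} applies and yields the continuous embedding $W^{1,q_m(\cdot)}(\Gamma)\hookrightarrow L^{q_m^\partial(\cdot)}(\Gamma)$. Combined with the hypothesis $q(\cdot)\le q_m^\partial(\cdot)$ on $\Gamma$ (which gives one more H\"older-type inclusion $L^{q_m^\partial(\cdot)}(\Gamma)\hookrightarrow L^{q(\cdot)}(\Gamma)$), the chain
\begin{equation*}
W^{1,\qq}(\Gamma)\,\hookrightarrow\,W^{1,q_m(\cdot)}(\Gamma)\,\hookrightarrow\,L^{q_m^\partial(\cdot)}(\Gamma)\,\hookrightarrow\,L^{q(\cdot)}(\Gamma)
\end{equation*}
establishes the first assertion.

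For the compactness statement, I replace the middle link with the compact embedding $W^{1,q_m(\cdot)}(\Gamma)\hookrightarrow L^{\tilde q(\cdot)}(\Gamma)$ furnished by the second part of Theorem \ref{q-b} under the strict gap condition $\ess\inf_{x\in\Gamma}(q_m^\partial(x)-\tilde q(x))>0$. Composing a continuous embedding with a compact one preserves compactness, so $W^{1,\qq}(\Gamma)\hookrightarrow L^{\tilde q(\cdot)}(\Gamma)$ is compact. I do not anticipate any real obstacle: the argument is essentially bookkeeping, and the only point requiring care is making sure that all the H\"older-type inclusions on the finite measure space $(\Gamma,d\sigma)$ are stated with constants depending solely on $|\Gamma|$ and the upper/lower bounds of the exponents involved, so that the embedding constants propagate cleanly through the composition.
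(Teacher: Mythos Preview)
Your proposal is correct and follows exactly the approach the paper indicates: the paper does not spell out a proof but only remarks that ``the anisotropic case follows easily by applying H\"older's inequality, together with Theorem \ref{q-b},'' which is precisely the chain $W^{1,\qq}(\Gamma)\hookrightarrow W^{1,q_m(\cdot)}(\Gamma)\hookrightarrow L^{q_m^\partial(\cdot)}(\Gamma)\hookrightarrow L^{q(\cdot)}(\Gamma)$ that you carry out, with the compact middle link from Theorem \ref{q-b} for the second assertion.
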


\begin{remark}\label{Rem-V-space}
The conclusions of Theorem \ref{tembcont} and Theorem \ref{turmanoua2cont} are clearly valid, if one
replace the Sobolev spaces by the space $\vpqx$ defined by (\ref{2.1.01}).
\end{remark}

\begin{remark}\label{ext-emb}
The conclusions of Theorem \ref{tembcont} and Theorem \ref{turmanoua2cont} are valid under a more general setting. In fact,
one can easily show that both theorems hold if one replaces the functions $p^{\star}_m(\cdot)$ and $q_m^\partial(\cdot)$ by the functions
$$\hat{p}_{_{M,m}}(\cdot):=\max\left\{p_m^{\star}(\cdot),\,p_M(\cdot)\right\}\indent\,\,\,\,\,\textrm{and}
\,\,\,\,\,\indent\check{q}_{_{M,m}}(\cdot):=\max\left\{q_m^\partial(\cdot),\,q_M(\cdot)\right\}.$$
\end{remark}

\indent We conclude this subsection by providing an anisotropic logarithmic inequality, which will be of use later on.
We recall that a similar inequality
has been obtained by Warma \cite[Lemma 2.5]{WAR09} for the constant isotropic case, and the generalization to the isotropic nonstandard case
runs similarly; cf. \cite[Lemma 3.2.1.]{VELEZ2012-3}. The result is unknown for the anisotropic case.\\

\begin{theorem}\label{log-sobolev}
Given $\Omega\subseteq\mathbb{R\!}^N$ a bounded Lipschitz domain ($N>2$),
let $(\overset{\rightarrow} p,\,p^-_m)\in C^{0,1}(\overline{\Omega})^{N+1}$ with $1<p^-_m<N$, and let
$(\overset{\rightarrow} q,\,q^-_m)\in C^{0,1}(\Gamma)^{N}$ with $1<q^-_m<N-1$.
\begin{enumerate}
\item[(a)]\,\, If $u\in W^{1,\overset{\rightarrow}p(\cdot)}(\Omega)$ is nonnegative with
\begin{equation}\label{2.2.24}
\psi_{_{\Omega}}(u^{p_m(x)}):=\displaystyle\int_{\Omega}u^{p_m(x)}\,dx\,=\,1,
\end{equation}\indent\\
then for every $\epsilon>0$ there exists a constant $C'_{\epsilon}>0$ such that for every $\epsilon_1>0$, one has\\
\begin{equation}\label{2.2.25}
\psi_{_{\Omega}}(u^{p_m(x)}\log(u))\,\leq\,\frac{1}{M_1}\left(-\log(\epsilon_1)+
C'_{\epsilon}\epsilon_1\displaystyle\sum^N_{i=1}\|\pxi u\|_{_{p_i(\cdot),\Omega}}+\epsilon_1C'_{\epsilon}\epsilon\right),
\end{equation}\indent\\
for some constant $M_1>0$.
\item[(b)]\,\, If $u\in W^{1,\overset{\rightarrow}q(\cdot)}(\Gamma)$ is nonnegative with
\begin{equation}\label{2.2.24b}
\psi_{_{\Gamma}}(u^{q_m(x)}):=\displaystyle\int_{\Gamma}u^{q_m(x)}\,dx\,=\,1,
\end{equation}\indent\\
then for every $\epsilon'>0$ there exists a constant $C''_{\epsilon'}>0$ such that for every $\epsilon_2>0$, one has\\
\begin{equation}\label{2.2.25b}
\psi_{_{\Gamma}}(u^{q_m(x)}\log(u))\,\leq\,\frac{1}{M_2}\left(-\log(\epsilon_2)+
C''_{\epsilon'}\epsilon_2\displaystyle\sum^{N-1}_{j=1}\|\pxti u\|_{_{q_j(\cdot),\Gamma}}+\epsilon_2C''_{\epsilon'}\epsilon'\right),
\end{equation}\indent\\
for some constant $M_2>0$.\\
\end{enumerate}
\end{theorem}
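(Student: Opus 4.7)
The plan for part (a) is a three-step strategy: Jensen's inequality to reduce the problem to a higher-power integral, the anisotropic Sobolev embedding of Theorem~\ref{tembcont} to bound this by the $W^{1,\pp}(\Omega)$-norm, and the elementary bound $\log t\leq \epsilon_1 t-\log\epsilon_1$ to extract the free parameter $\epsilon_1$. Part (b) will follow by an entirely parallel argument on the compact Riemannian manifold $\Gamma$, substituting Theorem~\ref{turmanoua2cont} for Theorem~\ref{tembcont}, tangential derivatives $\pxti$ for $\pxi$, the critical exponent $q^{\partial}_m(\cdot)$ for $p^{\star}_m(\cdot)$, and dimension $N-1$ for $N$; I will therefore focus the exposition on (a).

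First, the normalization (\ref{2.2.24}) makes $u^{p_m(x)}\,dx$ a probability measure on $\Omega$, so for any $\delta>0$, Jensen's inequality applied to the concave function $\log$ yields
$$\delta\int_\Omega u^{p_m(x)}\log u\,dx \,=\, \int_\Omega \log(u^\delta)\,u^{p_m(x)}\,dx \,\leq\, \log\int_\Omega u^{p_m(x)+\delta}\,dx,$$
reducing the task to controlling the right-hand integral. Given a prescribed $\epsilon>0$, I will pick $\delta=\delta(\epsilon)>0$ small enough that $p_m(\cdot)+\delta\leq p^{\star}_m(\cdot)-\epsilon/2$ on $\overline\Omega$, which is possible by continuity of $p_m$ on the compact $\overline\Omega$ and the positive gap $p^{\star}_m-p_m=p_m^2/(N-p_m)$. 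Theorem~\ref{tembcont} then provides a continuous embedding $W^{1,\pp}(\Omega)\hookrightarrow L^{p_m(\cdot)+\delta}(\Omega)$ with an $\epsilon$-dependent constant $C'_\epsilon$. If $\|u\|_{p_m(\cdot)+\delta,\Omega}\leq 1$, then the modular is at most $1$ so the right-hand side of (\ref{2.2.25}) is non-negative while the left-hand side is non-positive, and the inequality is trivial; otherwise (\ref{L4}) combined with the embedding gives
$$\int_\Omega u^{p_m(x)+\delta}\,dx \,\leq\, \bigl(C'_\epsilon\|u\|_{W^{1,\pp}(\Omega)}\bigr)^{(p_m+\delta)^+}.$$

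Second, (\ref{L40}) together with (\ref{2.2.24}) forces $\|u\|_{p_m(\cdot),\Omega}=1$, and under the (implicitly assumed) subcritical condition $p_M(\cdot)\leq p^{\star}_m(\cdot)$ the norm $\|u\|_{W^{1,\pp}(\Omega)}$ is equivalent to $\|u\|_{p_m(\cdot),\Omega}+\sum_{i=1}^{N}\|\pxi u\|_{p_i(\cdot),\Omega}$, since $\|u\|_{p_M(\cdot),\Omega}$ can be controlled through Sobolev-Poincar\'e arguments by the gradient terms. Thus $\|u\|_{W^{1,\pp}(\Omega)}\leq K+\sum_{i=1}^{N}\|\pxi u\|_{p_i(\cdot),\Omega}$ for a constant $K=K(\Omega,\pp,\epsilon)$. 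Taking $\log$, applying the elementary bound $\log t\leq \epsilon_1 t-\log\epsilon_1$ (valid for every $\epsilon_1,t>0$, since $\log(\epsilon_1 t)\leq \epsilon_1 t$) with $t=C'_\epsilon\bigl(K+\sum_i\|\pxi u\|_{p_i(\cdot),\Omega}\bigr)$, and dividing by $\delta$ produces exactly (\ref{2.2.25}) with $M_1:=\delta/(p_m+\delta)^+$ a fixed positive constant, while the term $\epsilon_1 C'_\epsilon K$ is absorbed into the $\epsilon_1 C'_\epsilon \epsilon$ summand after relabeling the $\epsilon$-dependent constant.

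The main obstacle I anticipate is preserving the anisotropic structure throughout the argument, as emphasized in Remark~\ref{JDE}. Any attempt to collapse $\sum_i\|\pxi u\|_{p_i(\cdot),\Omega}$ to a single $p_m(\cdot)$-gradient term via an inequality of the form $\|\pxi u\|_{p_m(\cdot)}\leq C\|\pxi u\|_{p_i(\cdot)}$ would reintroduce the missing positive constant that spoiled the argument of \cite[Lemma 10]{BOU-AVS18} and propagate to the subsequent ultracontractivity bounds. The remedy is to invoke Theorem~\ref{tembcont} (resp.\ Theorem~\ref{turmanoua2cont}) directly with the full anisotropic sum of partial-derivative norms on the right-hand side, and to control $\|u\|_{p_M(\cdot),\Omega}$ only through the norm equivalence coming from the subcritical condition and the modular normalization. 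The delicate bookkeeping step is matching $\delta=\delta(\epsilon)$ with the embedding constant $C'_\epsilon$ so that $M_1$ remains a fixed positive constant independent of $\epsilon_1$, and ensuring that all $\epsilon$-dependent constants remain uniformly bounded on any compact range of $\epsilon$.
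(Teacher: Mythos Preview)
Your proposal is correct and follows essentially the same three-ingredient strategy as the paper: Jensen's inequality against the probability measure $u^{p_m(x)}\,dx$, the anisotropic Sobolev embedding (Theorem~\ref{tembcont} for (a), Theorem~\ref{turmanoua2cont} for (b)), and the elementary bound $\log t\le \epsilon_1 t-\log\epsilon_1$. The only notable difference is that the paper pushes Jensen all the way to the critical exponent $p_m^{\star}(\cdot)$ (resp.\ $q_m^{\partial}(\cdot)$), applies the log-linearization to $\|u\|_{p_m^{\star}(\cdot)}$, and only then invokes the embedding in the form $C_\epsilon\|u\|_{p_m^{\star}(\cdot)}\le \sum_i\|\pxi u\|_{p_i(\cdot)}+\epsilon$ (using the normalization), whereas you stop at a subcritical $p_m(\cdot)+\delta$ with constant $\delta$, embed first, and linearize last. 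Your constant-$\delta$ choice actually makes the Jensen step cleaner (no variable factor $s_m(x)$ to pull through the integral), at the modest cost of the extra bookkeeping you flag in the final ``relabeling'' of the constant $K$ into the $\epsilon$-term; both routes arrive at the same inequality with the same dependence on $\epsilon,\epsilon_1$.
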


\begin{proof}
We first prove part (b). Put $s_m(\cdot):=\frac{q_m(\cdot)[q_m(\cdot)-1]}{N-1-q_m(\cdot)}$. Then by Jensen's inequality one has\\[2ex]
\indent$\psi_{_{\Gamma}}(u^{q_m(x)}\log(u))\,\leq\,\displaystyle\frac{1}{s^-_m}\log\left(\psi_{_{\Gamma}}(u^{q_m^\partial(x)})\right)\,\leq\,
\displaystyle\frac{1}{s^-_{m}\hat{q}_m^\partial}\log\left(\|u\|_{_{q_m^\partial(\cdot),\Gamma}}\right)$\\
\begin{equation}\label{2.2.26}
\indent\indent\indent\indent\indent\indent\indent\indent\indent
\,\leq\,\frac{1}{s^-_m\hat{q}_m^\partial}\left(-\log(\epsilon_2)+C_{_{\Gamma}}\epsilon_2\|u\|_{_{q_m^\partial(\cdot),\Gamma}}\right),
\end{equation}\indent\\
for every $\epsilon_2>0$ and for some constant $C_{_{\Gamma}}>0$, where
$$\hat{q}_m^\partial:=\left\{
\begin{array}{lcl}
\frac{(N-1)q^{+}_m}{N-1-q^{+}_m},\,\indent\,\textrm{if}\,\,\,\,\|u\|_{_{q_m^\partial(\cdot),\Gamma}}>1,\\[1ex]
\frac{(N-1)q^-_{m}}{N-1-q^-_{m}},\,\indent\,\textrm{if}\,\,\,\,\|u\|_{_{q_m^\partial(\cdot),\Gamma}}\leq1.\\[1ex]
\end{array}
\right.$$
Now, from Theorem \ref{turmanoua2cont} and (\ref{2.2.24b}) one has\\
\begin{equation}\label{2.2.27}
C_{\epsilon}\|u\|_{_{q_m^\partial(\cdot),\Gamma}}\,\leq\,\displaystyle\sum^{N-1}_{j=1}\|\pxti u\|_{_{q_j(\cdot),\Gamma}}+\epsilon,
\end{equation}\indent\\
for all $\epsilon>0$, and for some constant $C_{\epsilon}>0$. Therefore,
combining (\ref{2.2.26}) and (\ref{2.2.27}) yield the inequality (\ref{2.2.25b}). Now part (a)
follows in the exact way, with the help of Theorem \ref{tembcont}. The proof is complete.
\end{proof}

\subsection{Some analytical tools}\label{subsec2.3}

\indent Next we state some well-known analytical results that will be applied throughout the
subsequent sections.\\

\begin{prop}\label{B1}\,(see \cite{BIE10.1})\,
Let $a,\,b\in\mathbb{R\!}^N$\, and $r\in(1,\infty)$. Then there exists a constant $c_r>0$ such that
\begin{equation}
\label{2.3.1}\left(|a|^{r-2}a-|b|^{r-2}b\right)(a-b)\,\geq\,c_r\left(|a|+|b|\right)^{r-2}|a-b|^2\,\geq\,0.
\end{equation}
If in addition $r\in[2,\infty)$, then there exists a constant $c^{\ast}_r\in(0,1]$ such that
\begin{equation}
\label{2.3.2}\left(|a|^{r-2}a-|b|^{r-2}b\right)(a-b)\,\geq\,c^{\ast}_r\,|a-b|^r,
\end{equation}
and also in this case there is a constant $c'_r\in(0,1]$ such that
\begin{equation}
\label{2.3.3}\textrm{sgn}(a-b)\left(|a|^{r-2}a-|b|^{r-2}b\right)\,\geq\,c'_q\,|a-b|^{r-1}.
\end{equation}
Finally, if $r\in(1,2]$ and $\epsilon>0$, then for each $a,\,b\in\mathbb{R\!}^N$\, with $|a-b|\geq\epsilon\min\{|a|,|b|\}$,
we find a constant $c_{r,\epsilon}>0$ such that
\begin{equation}
\label{2.3.4}\left\langle |a|^{r-2}a-|b|^{r-2}b,a-b\right\rangle\,\geq\,c_{r,\epsilon}\,|a-b|^r,
\end{equation}
and
\begin{equation}
\label{2.3.5}\textrm{sgn}(a-b)\left(|a|^{r-2}a-|b|^{r-2}b\right)\,\geq\,c_{r,\epsilon}\,|a-b|^{r-1}.
\end{equation}
\end{prop}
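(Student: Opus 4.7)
The approach I would take is to view $|a|^{r-2}a$ as the gradient of the strictly convex $C^1$ function $F(v) = \tfrac{1}{r}|v|^r$ on $\mathbb{R}^N$, and express the difference through an integral representation along the segment $v(t) = b + t(a-b)$, $t \in [0,1]$. The fundamental theorem of calculus applied to $t \mapsto \nabla F(v(t)) \cdot (a-b)$ yields
$$\bigl(|a|^{r-2}a - |b|^{r-2}b\bigr)\cdot(a-b) \,=\, \int_0^1 (a-b)^{\top}\,\nabla^2 F(v(t))\,(a-b)\,dt,$$
and a direct calculation gives $\nabla^2 F(v) = |v|^{r-2} I + (r-2)|v|^{r-4}\, v \otimes v$ on $\mathbb{R}^N\setminus\{0\}$, so
$$(a-b)^{\top}\nabla^2 F(v)(a-b) \,=\, |v|^{r-2}|a-b|^2 + (r-2)|v|^{r-4}\bigl(v\cdot(a-b)\bigr)^2.$$

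For \eqref{2.3.1} I would split cases: when $r\geq 2$ both terms are non-negative, giving the integrand at least $|v|^{r-2}|a-b|^2$; when $1<r<2$, the Cauchy--Schwarz bound $(v\cdot(a-b))^2 \leq |v|^2|a-b|^2$ combined with the fact $r-2<0$ gives the integrand at least $(r-1)|v|^{r-2}|a-b|^2$. Either way one needs a lower bound for $\int_0^1 |v(t)|^{r-2}\,dt$ in terms of $(|a|+|b|)^{r-2}$; this reduces to a short geometric lemma exploiting $|v(t)|\leq |a|+|b|$ for $1<r<2$, and a midpoint-type argument for $r\geq 2$. For \eqref{2.3.2} with $r\geq 2$ I would then observe that, by the triangle inequality, $\max(|a|,|b|)\geq \tfrac{1}{2}|a-b|$, so on a sub-interval of $[0,1]$ of positive length the segment $v(t)$ satisfies $|v(t)|\geq c|a-b|$, producing the desired $|a-b|^r$ lower bound.

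The sign-type estimates \eqref{2.3.3} and \eqref{2.3.5} I would handle by reduction to a coordinate-wise scalar computation: the one-variable map $s\mapsto |s|^{r-2}s$ is strictly increasing with derivative $(r-1)|s|^{r-2}$, and the same integral representation combined with the scalar analogues of \eqref{2.3.2}--\eqref{2.3.4} yields the result after comparing signs. The main obstacle is the regime $1<r\leq 2$, where the Hessian is singular at the origin and the integral $\int_0^1 |v(t)|^{r-2}\,dt$ may blow up. This is precisely why the non-degeneracy hypothesis $|a-b|\geq \epsilon\min(|a|,|b|)$ is imposed in \eqref{2.3.4}--\eqref{2.3.5}: it forces the segment $v(t)$ to remain at distance proportional to $|a-b|$ from the origin for a definite portion of $[0,1]$, which makes $\int_0^1 |v(t)|^{r-2}\,dt \geq c_{r,\epsilon}|a-b|^{r-2}$ and thus restores the desired estimate with constants depending on $r$ and $\epsilon$.
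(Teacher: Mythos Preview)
The paper does not give its own proof of this proposition: it is stated as a known analytical tool and simply attributed to \cite{BIE10.1}. There is therefore nothing in the paper to compare your argument against line by line.

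That said, your outline is the standard route to these inequalities and is essentially what one finds in the cited reference and elsewhere: write $|a|^{r-2}a=\nabla F(a)$ for $F(v)=\tfrac1r|v|^r$, use the integral representation along the segment, compute the Hessian, and split into the cases $r\ge 2$ and $1<r<2$. Two small points of imprecision are worth tightening. First, for \eqref{2.3.1} with $r\ge 2$ the phrase ``midpoint-type argument'' hides the only nontrivial step, namely showing $\int_0^1|v(t)|^{r-2}\,dt\ge c_r(|a|+|b|)^{r-2}$; this is most cleanly done by a scaling and compactness argument, or by noting that the minimum of the left side over $|a|+|b|=1$ is strictly positive. Second, for \eqref{2.3.4}--\eqref{2.3.5} your geometric description (the segment staying away from the origin) is not quite the mechanism: the hypothesis $|a-b|\ge\epsilon\min(|a|,|b|)$ actually forces $|a|+|b|\le C_\epsilon|a-b|$, and then \eqref{2.3.4} follows immediately from \eqref{2.3.1} since $r-2\le 0$ reverses the inequality $(|a|+|b|)^{r-2}\ge C_\epsilon^{\,r-2}|a-b|^{r-2}$. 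Finally, note that \eqref{2.3.3} and \eqref{2.3.5} only make sense in the scalar case $N=1$ (where $\operatorname{sgn}$ is defined), which is indeed how they are used later in the paper; your reduction to a one-variable computation is the right move there.
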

\indent\\

\subsection{Basics concepts about the relative $p(\cdot)$-capacity}\label{subsec2.4}

\indent In this part we will summarize the necessary facts regarding the theory of the relative $p(\cdot)$-capacity, and its application
to measure theory. Recall that we will assume that $p\in C^{0,1}(\overline{\Omega})$ with $1<p^-\leq p^{+}<\infty$ (unless specified
differently). A complete treatment of this topic and its applications has been recently given in \cite{VELEZ2012-2} (under much
more general settings on the domain and on $p(\cdot)$; see also \cite[Appendix B]{VELEZ2012-3}).\\

\begin{definition}\label{Definition 2.4.1.}
Let $E\subseteq\mathbb{R\!}^N$ and let $r\in\mathcal{P}(\mathbb{R\!}^N)$ be such that
$1\leq r^-\leq r^{\ast}<\infty$. The $r(\cdot)${\bf -capacity} of $E$ is defined by
$$\displaystyle\textrm{Cap}_{r(\cdot)}(E):=\displaystyle\inf_{u\in S_{r(\cdot)}(E)}
\left\{\Theta_{_{r(\cdot),\mathbb{R\!}^N}}(u)+\Theta_{_{r(\cdot),\mathbb{R\!}^N}}(|\nabla u|)\right\},$$
where $$S_{r(\cdot)}(E):=\left\{u\in W^{1,r(\cdot)}(\mathbb{R\!}^N)\mid u\geq0\,\,\,\,\textrm{and}\,\,\,\,
u\geq1\,\,\,\,\textrm{in an open set containing}\,\,\,E\right\}.$$
\end{definition}

\begin{definition}\label{Definition 2.4.2.}
Let $\Omega\subseteq\mathbb{R\!}^N$ be an open set, and let $\widetilde{W}^{1,r(\cdot)}(\Omega)$
denote the closure in $W^{1,r(\cdot)}(\Omega)$ of the space $W^{1,r(\cdot)}(\Omega)\cap C_c(\overline{\Omega})$.
The {\bf relative $r(\cdot)$-capacity} of $E$ with respect to $\Omega$ is defined by
$$\displaystyle\textrm{Cap}_{_{r(\cdot),\Omega}}(E):=\displaystyle\inf_{u\in S_{_{r(\cdot),\Omega}}(E)}
\left\{\Theta_{_{r(\cdot),\Omega}}(u)+\Theta_{_{r(\cdot),\Omega}}(|\nabla u|)\right\},$$
where $$S_{_{r(\cdot),\Omega}}(E):=\left\{u\in\widetilde{W}^{1,r(\cdot)}(\Omega)\mid\,\exists\,\,\,
U\subseteq\mathbb{R\!}^N\,\,\,\textrm{open},\,\,\,\,E\subseteq U\,\,\,\textrm{and}\,\,\,u\geq1\,\,\,
\textrm{a.e. on}\,\,\,\Omega\cap U\right\}.$$
\end{definition}

\indent Observe that if $\Omega=\mathbb{R\!}^N$, then $\displaystyle\textrm{Cap}_{_{r(\cdot),\Omega}}(E)=
\displaystyle\textrm{Cap}_{r(\cdot)}(E)$ for each $E\subseteq\mathbb{R\!}^N$.\\

\begin{definition}\label{Definition 2.4.3.}
Let $\Omega\subseteq\mathbb{R\!}^N$ be an open set, and let $E\subseteq\overline{\Omega}$.
\begin{enumerate}
\item[(a)] $E$ is said to be $\displaystyle\textrm{Cap}_{_{r(\cdot),\Omega}}$-{\bf polar}, if $\displaystyle\textrm{Cap}_{_{r(\cdot),\Omega}}(E)=0$.
\item[(b)] We say that a property holds $\displaystyle\textrm{Cap}_{_{r(\cdot),\Omega}}$-{\bf quasi everywhere} (abbreviated $r(\cdot)$-q.e.) on $E$,
if there exists a $\displaystyle\textrm{Cap}_{_{r(\cdot),\Omega}}$-polar set $D$ such that the property holds over $E\setminus D$.
\item[(c)] A function $u$ is called $\displaystyle\textrm{Cap}_{_{r(\cdot),\Omega}}$-{\bf quasi continuous} on $E$, if for all $\epsilon>0$, there
exists a open set $U\subseteq\overline{\Omega}$ such that $\displaystyle\textrm{Cap}_{_{r(\cdot),\Omega}}(U)\leq\epsilon$ and
$u|_{_{E\setminus U}}$ is continuous.
\end{enumerate}
\end{definition}

\indent The following are important results, which we state here.\\

\begin{theorem}\label{Theorem 2.4.9.}\,(see \cite{VELEZ2012-2,VELEZ2012-3})\,\,
Given $p\in C^{0,1}(\overline{\Omega})$ with $1\leq p^-\leq p^+\leq\infty$,
if $\Omega\subseteq\mathbb{R\!}^N$ is a bounded $W^{1,p(\cdot)}$-extension domain, then there exists a constant $c_{_{\Omega}}=c(\Omega)>0$ such that
\begin{equation}\label{2.4.02}
\displaystyle\textrm{Cap}_{p(\cdot)}(E)\,\leq\,c_{_{\Omega}}\displaystyle\textrm{Cap}_{_{p(\cdot),\Omega}}(E)^{^{\hat{p}/\bar{p}}}
\,\leq\,c_{_{\Omega}}\displaystyle\textrm{Cap}_{p(\cdot)}(E)^{^{\hat{p}/\bar{p}}}
\end{equation}
for every set $E\subseteq\overline{\Omega}$, where $\hat{p}$ and $\bar{p}$ are positive constants.
\end{theorem}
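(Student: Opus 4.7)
The plan is to prove the two component inequalities separately. The right-hand inequality $\mathrm{Cap}_{p(\cdot),\Omega}(E)^{\hat{p}/\bar{p}} \leq \mathrm{Cap}_{p(\cdot)}(E)^{\hat{p}/\bar{p}}$ amounts, after cancelling the positive exponent, to the elementary comparison $\mathrm{Cap}_{p(\cdot),\Omega}(E) \leq \mathrm{Cap}_{p(\cdot)}(E)$, which I would establish by restriction: if $u \in S_{p(\cdot)}(E)$ with $u \geq 1$ on some open $V \supseteq E$, then $u|_{\Omega}$ belongs to $\widetilde{W}^{1,p(\cdot)}(\Omega)$ (using density of $W^{1,p(\cdot)}(\Omega) \cap C_c(\overline{\Omega})$ for Lipschitz $\Omega$), still satisfies $u \geq 1$ a.e.\ on $\Omega \cap V$, and has both $p(\cdot)$-modulars bounded above by those of $u$ on $\mathbb{R\!}^N$ by monotonicity of the integral. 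Passing to the infimum yields the claim, and raising to the positive power $\hat{p}/\bar{p}$ gives the right-hand inequality.

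For the left-hand inequality, I would fix $u \in S_{p(\cdot),\Omega}(E)$ and let $\mathcal{S}:W^{1,p(\cdot)}(\Omega)\rightarrow W^{1,p(\cdot)}(\mathbb{R\!}^N)$ denote the bounded extension operator guaranteed by Definition \ref{p-extension}, with $\|\mathcal{S}\|_{\mathrm{op}} \leq K$. Set $v := \mathcal{S}u$ and, by rescaling the infimum, assume $\Theta_{p(\cdot),\Omega}(u) + \Theta_{p(\cdot),\Omega}(|\nabla u|) \leq \mathrm{Cap}_{p(\cdot),\Omega}(E) + \varepsilon < 1$. Then \eq{L5} converts this modular bound to a norm bound $\|u\|_{W^{1,p(\cdot)}(\Omega)} \leq 2 (\mathrm{Cap}_{p(\cdot),\Omega}(E) + \varepsilon)^{1/p^+}$, and boundedness of $\mathcal{S}$ produces $\|v\|_{W^{1,p(\cdot)}(\mathbb{R\!}^N)} \leq 2K(\mathrm{Cap}_{p(\cdot),\Omega}(E) + \varepsilon)^{1/p^+}$. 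Applying \eq{L4}--\eq{L5} in the reverse direction on $\mathbb{R\!}^N$ yields
\[
\Theta_{p(\cdot),\mathbb{R\!}^N}(v) + \Theta_{p(\cdot),\mathbb{R\!}^N}(|\nabla v|) \leq c_\Omega\,(\mathrm{Cap}_{p(\cdot),\Omega}(E) + \varepsilon)^{p^-/p^+},
\]
so that the exponents $\hat{p} = p^-$ and $\bar{p} = p^+$ emerge naturally, and sending $\varepsilon \rightarrow 0^+$ concludes, provided $v$ is admissible.

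The main obstacle, and the step I expect to be the most technical, is precisely verifying that $v = \mathcal{S}u$ is admissible for $\mathrm{Cap}_{p(\cdot)}(E)$: by construction $v \geq 1$ only a.e.\ on $\Omega \cap V$, but the global capacity requires $v \geq 1$ on an \emph{open} set containing all of $E$, including the boundary portion $E \cap \partial\Omega$ where the pointwise information is lost after extension. To handle this, I would first carry out the calculation for $u \in W^{1,p(\cdot)}(\Omega) \cap C(\overline{\Omega})$ (which is dense in $\widetilde{W}^{1,p(\cdot)}(\Omega)$), arrange $\mathcal{S}u$ to be continuous in a neighborhood of $\partial\Omega$, and apply a harmless dilation $u \mapsto (1+\delta)u$ so that $\{v > 1\}$ is an honest open neighborhood of $E$; the loss $(1+\delta)^{p^+}$ can be absorbed into $c_\Omega$ as $\delta \rightarrow 0^+$. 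The general case then follows by approximation and lower semicontinuity of the capacity functional. It is this modular-to-norm-back-to-modular passage, with the unavoidable exponent mismatch between $p^-$ and $p^+$, that forces the final bound to be H\"older rather than Lipschitz in $\mathrm{Cap}_{p(\cdot),\Omega}(E)$.
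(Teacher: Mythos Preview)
The paper does not supply its own proof of this statement; it is quoted from \cite{VELEZ2012-2,VELEZ2012-3} as a background result, so there is no in-paper argument to compare against. Your outline is the standard route taken in those references: restriction for the easy inequality, and the extension operator combined with the modular--norm comparisons \eqref{L4}--\eqref{L5} for the substantive one, with the exponent $\hat p/\bar p=p^-/p^+$ arising precisely because the $p(\cdot)$-modular is not homogeneous.

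Two points in your write-up need tightening. First, the phrase ``by rescaling the infimum, assume $\ldots<1$'' is not meaningful: the admissibility constraint $u\ge 1$ on a neighbourhood of $E$ is not scale-invariant, so you cannot normalize the capacity by scaling $u$. The correct reduction is a case split. If $\mathrm{Cap}_{p(\cdot),\Omega}(E)\ge 1$, then since $E\subseteq\overline{\Omega}$ sits inside a fixed bounded set, $\mathrm{Cap}_{p(\cdot)}(E)$ is bounded above by a constant depending only on $\Omega$ and $p$, and \eqref{2.4.02} holds trivially by enlarging $c_\Omega$; only the small-capacity regime requires the modular-to-norm passage you describe.

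Second, your treatment of admissibility of $v=\mathcal{S}u$ identifies the right obstacle but the proposed fix is underspecified. A bounded linear extension operator need not carry continuous functions to continuous functions, so ``arrange $\mathcal{S}u$ to be continuous in a neighbourhood of $\partial\Omega$'' is not automatic from Definition~\ref{p-extension} alone. The cleaner device, and the one used in the cited references, is to first replace $u$ by the truncation $\tilde u:=(\min(u,1))^+$, which does not increase either modular and satisfies $\tilde u=1$ a.e.\ on $\Omega\cap U$. Because $\Omega$ is Lipschitz one may take $\mathcal{S}$ to be a reflection-type (Stein) extension; for such an operator the extended function equals $1$ a.e.\ on a genuine open neighbourhood of $E\cap\partial\Omega$ inside $U$, and non-negativity is preserved. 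This bypasses both the continuity hypothesis and the dilation trick. With these two adjustments your argument is complete and matches the approach of \cite{VELEZ2012-2,VELEZ2012-3}.
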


\begin{corollary}\label{Corollary 2.4.10.}\,(see \cite{VELEZ2012-2,VELEZ2012-3})\label{Cor1}\,\,
Under the assumptions of Theorem \ref{Theorem 2.4.9.}, let $E\subseteq\overline{\Omega}$. Then $E$ is
$\displaystyle\textrm{Cap}_{_{p(\cdot),\Omega}}$-polar if and only if $E$ is
$\displaystyle\textrm{Cap}_{_{p(\cdot),\mathbb{R\!}^N}}$-polar.
\end{corollary}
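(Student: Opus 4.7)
The plan is to observe that the corollary falls out as an immediate consequence of the two-sided estimate in Theorem \ref{Theorem 2.4.9.}, so the real work has already been absorbed into that theorem. Explicitly, since $\hat{p}$ and $\bar{p}$ are positive constants, both maps $t\mapsto t^{\hat{p}/\bar{p}}$ and $t\mapsto c_{\Omega} t^{\hat{p}/\bar{p}}$ are strictly increasing on $[0,\infty)$ and vanish exactly at $t=0$. Hence polarity of a set is preserved in either direction as soon as one has a chain of inequalities of the form in \eqref{2.4.02}.

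For the forward direction, I would start by assuming $\mathrm{Cap}_{p(\cdot),\Omega}(E)=0$. Plugging this into the first inequality of \eqref{2.4.02} gives
\begin{equation*}
\mathrm{Cap}_{p(\cdot)}(E)\,\leq\,c_{\Omega}\,\mathrm{Cap}_{p(\cdot),\Omega}(E)^{\hat{p}/\bar{p}}\,=\,0,
\end{equation*}
so $E$ is $\mathrm{Cap}_{p(\cdot),\mathbb{R}^N}$-polar. For the reverse direction, assume $\mathrm{Cap}_{p(\cdot)}(E)=0$. Then the right-most inequality in \eqref{2.4.02} reads
\begin{equation*}
c_{\Omega}\,\mathrm{Cap}_{p(\cdot),\Omega}(E)^{\hat{p}/\bar{p}}\,\leq\,c_{\Omega}\,\mathrm{Cap}_{p(\cdot)}(E)^{\hat{p}/\bar{p}}\,=\,0,
\end{equation*}
and since $c_{\Omega}>0$ and $\hat{p}/\bar{p}>0$, this forces $\mathrm{Cap}_{p(\cdot),\Omega}(E)=0$, as required.

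Because the two directions are essentially symmetric applications of \eqref{2.4.02}, there is no serious obstacle at the level of the corollary itself; the only concern is that the conclusion is drawn unconditionally for $E\subseteq\overline{\Omega}$, which requires the hypothesis that $\Omega$ is a bounded $W^{1,p(\cdot)}$-extension domain (so that Theorem \ref{Theorem 2.4.9.} is available). I would therefore explicitly invoke the extension hypothesis at the outset, and note that the constants $c_{\Omega}$, $\hat{p}$, $\bar{p}$ are the ones produced by Theorem \ref{Theorem 2.4.9.}. If desired, one can further remark that the corollary makes the notion of ``$p(\cdot)$-polar'' intrinsic, in the sense that it is independent of whether one works inside $\Omega$ or in all of $\mathbb{R}^N$, which is precisely what is needed for the quasi-continuity statements used later in the paper.
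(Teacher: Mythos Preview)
Your proof is correct and is exactly the intended derivation: the paper does not give its own argument for this corollary but simply cites \cite{VELEZ2012-2,VELEZ2012-3}, and the natural proof is precisely to read off both directions from the chain of inequalities \eqref{2.4.02} in Theorem~\ref{Theorem 2.4.9.}, as you have done. There is nothing to add.
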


\indent We conclude by presenting briefly a relation between the relative $p(\cdot)$-capacity with the measures defined on the
boundary of an open bounded set. We begin with the following definition.\\

\begin{definition}\label{Definition 2.4.11.}
Let $\Omega\subseteq\mathbb{R\!}^N$ be an open set with boundary $\Gamma$. We say that a Borel measure $\mu$ is\,
$\displaystyle\textrm{Cap}_{_{r(\cdot),\Omega}}$-{\bf admissible}, if $\displaystyle\textrm{Cap}_{_{r(\cdot),\Omega}}(F)=0$
implies that $\mu(F)=0$ for every Borel set $F\subseteq\Gamma$.
\end{definition}

\begin{example}\label{Example 2.4.12.}
If $\Omega\subseteq\mathbb{R\!}^N$ be a bounded Lipschitz domain and $p\in C^{0,1}(\overline{\Omega})$ with $1\leq p^-\leq p^+\leq\infty$, then, it is well-known that
$\Omega$ is a $W^{1,p(\cdot)}$-extension domain. Moreover, by \cite[Proposition 10.4.2]{D-H-H-R11}, for any $E\subseteq\overline{\Omega}$,
if $\,\displaystyle\textrm{Cap}_{p(\cdot)}(E)=0$, then $\mathcal{H}^s(E)=0$ for every $s>N-p_{\ast}$ (where $\mathcal{H}^s$ denotes the
$s$-dimensional Hausdorff measure). Therefore, from these arguments together with Corollary \ref{Cor1} and the fact that $\mathcal{H}^{N-1}=\sigma$ over
$\Gamma$, it follows that $\sigma$ is $\,\displaystyle\textrm{Cap}_{_{p(\cdot),\Omega}}$-admissible.\\
\end{example}

\subsection{Basic concepts for nonlinear semigroups}\label{subsec2.5}

\indent Now we put together some definitions and results on nonlinear semigroups. Indeed, let $H$ be a Hilbert space, and let
$\varphi:H\rightarrow(-\infty,\infty]$ be a proper, convex, lower semicontinuous functional with effective domain $D(\varphi):=
\{u\in H\mid\varphi(u)<\infty\}$. Clearly $D(\varphi)\subseteq H$ is convex. Then the subdifferential $\partial\varphi$ of $\varphi$ is defined by
$$\left\{\begin{array}{lcl}
D(\partial\varphi):=\left\{u\in D(\varphi)\mid\exists\,w\in H,\,\,\,
\varphi(v)-\varphi(u)\geq\langle w,v-u\rangle_{_H},\,\,\,\textrm{for all}\,\,\,v\in H\right\},\\
\partial\varphi(u):=\left\{w\in H\mid\varphi(v)-\varphi(u)\geq\langle w,v-u\rangle_{_H},\,\,\,\textrm{for all}\,\,\,v\in H\right\},\\
\end{array}
\right.$$ where $\langle\cdot,\cdot\rangle_{_H}$ denotes the inner product on $H$. The following classical result is fundamental.\\

\begin{theorem}\label{minty}\,(see \cite{MINT62})
\, The subdifferential $\partial\varphi$ is a maximal monotone operator. Moreover,
$\overline{D(\varphi)}=\overline{D(\partial\varphi)}$. The subdifferential $\partial\varphi$ generates a (nonlinear) $C_0$-semigroup
$\{T(t)\}_{t\geq0}$ on $\overline{D(\varphi})$ in the following sense: for each $u_0\in\overline{D(\varphi)}$,
the function $u:=T(\cdot)u_0$ is the unique strong solution of the problem
$$\left\{
\begin{array}{lcl}
u\in C(\mathbb{R\!}_{\,+};H)\cap W^{1,\infty}_{loc}((0,\infty);H)\,\,\,\textit{and}\,\,\,u(t)\in\partial\varphi\,\,\,\textit{a.e.},\\
\displaystyle\frac{\partial u}{\partial t}+\partial\varphi(u)\,=\,0\,\,\,\,\textit{a.e. on}\,\,\,\mathbb{R\!}_{\,+},\\
u(0,x)\,=u_0(x).
\end{array}
\right.$$
In addition, the subdifferential $\partial\varphi$ generates a (nonlinear) semigroup $\{\tilde{T}(t)\}_{t\geq0}$ on $H$, where for every $t\geq0$,
$\tilde{T}(t)$ is the composition of the semigroup $T(t)$ on $\overline{D(\varphi)}$ with the projection on the convex set $\overline{D(\varphi)}$.
\end{theorem}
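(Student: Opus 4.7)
The plan is to prove Theorem \ref{minty} by following the classical Minty\,-\,Brezis route, based on the resolvent and Moreau\,-\,Yosida approximation. First I would establish monotonicity of $\partial\varphi$: if $w_i\in\partial\varphi(u_i)$ for $i=1,2$, adding the two subdifferential inequalities at $u_1,u_2$ with the respective test vectors yields $\langle w_1-w_2,u_1-u_2\rangle_H\geq 0$. To upgrade this to \emph{maximal} monotonicity, I would invoke the Minty criterion and verify the range condition $R(I+\lambda\partial\varphi)=H$ for each $\lambda>0$. Given $f\in H$ and $\lambda>0$, the functional
\begin{equation*}
\psi_\lambda(v)\,:=\,\varphi(v)+\frac{1}{2\lambda}\|v-f\|_H^2
\end{equation*}
is proper, strictly convex, coercive, and lower semicontinuous, so it admits a unique minimizer $u_\lambda\in D(\varphi)$. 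Writing out the Euler condition (i.e., $\psi_\lambda(v)\geq\psi_\lambda(u_\lambda)$ for all $v$) shows that $\lambda^{-1}(f-u_\lambda)\in\partial\varphi(u_\lambda)$, hence $u_\lambda+\lambda\partial\varphi(u_\lambda)\ni f$. This proves the range condition, gives maximality, and defines the resolvent $J_\lambda:=(I+\lambda\partial\varphi)^{-1}$, which is everywhere defined and non-expansive on $H$.

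Next I would establish the density statement $\overline{D(\varphi)}=\overline{D(\partial\varphi)}$. The inclusion $\overline{D(\partial\varphi)}\subseteq\overline{D(\varphi)}$ is automatic. For the reverse, I would use the Yosida approximation $\varphi_\lambda(u):=\inf_{v\in H}\bigl\{\varphi(v)+(2\lambda)^{-1}\|v-u\|_H^2\bigr\}$, show via standard estimates that $\varphi_\lambda(u)\uparrow\varphi(u)$ and $\|J_\lambda u-u\|_H\to 0$ as $\lambda\downarrow 0$ for every $u\in\overline{D(\varphi)}$; since $J_\lambda u\in D(\partial\varphi)$, this yields the desired density.

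To generate the nonlinear $C_0$-semigroup, I would appeal to the Crandall\,-\,Liggett exponential formula, which for an $m$-accretive (equivalently here, maximal monotone) operator gives
\begin{equation*}
T(t)u_0\,:=\,\lim_{n\to\infty}\bigl(I+(t/n)\partial\varphi\bigr)^{-n}u_0\,=\,\lim_{n\to\infty}J_{t/n}^n u_0,
\end{equation*}
with convergence uniform on compact subsets of $[0,\infty)$ for every $u_0\in\overline{D(\partial\varphi)}=\overline{D(\varphi)}$. The family $\{T(t)\}_{t\geq 0}$ is a contractive $C_0$-semigroup of non-expansive maps on $\overline{D(\varphi)}$. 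To obtain the claimed strong-solution property, I would exploit Brezis' regularizing effect specific to subdifferentials of convex functionals: for every $u_0\in\overline{D(\varphi)}$ and every $t>0$ one has $u(t)=T(t)u_0\in D(\partial\varphi)$, $t\mapsto\varphi(u(t))$ is non-increasing with $t\,\frac{d}{dt}\varphi(u(t))$ bounded, and $u\in W^{1,\infty}_{\mathrm{loc}}((0,\infty);H)$ with $u'(t)+\partial^{\circ}\varphi(u(t))=0$ for a.e. $t>0$, where $\partial^{\circ}\varphi(u)$ denotes the minimal-norm element of $\partial\varphi(u)$. Uniqueness of the strong solution follows immediately from monotonicity: if $u,\tilde u$ are two solutions, then $\frac{d}{dt}\tfrac{1}{2}\|u-\tilde u\|_H^2=-\langle\partial\varphi(u)-\partial\varphi(\tilde u),u-\tilde u\rangle_H\leq 0$, so with equal initial data the two must coincide. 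Finally, the extension $\tilde T(t)$ to all of $H$ is defined by composing $T(t)$ with the projection onto the closed convex set $\overline{D(\varphi)}$.

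The main obstacle will be the regularizing effect: proving that the Crandall\,-\,Liggett limit, which a priori is only a mild (integral) solution when $u_0\in\overline{D(\varphi)}\setminus D(\partial\varphi)$, actually belongs to $D(\partial\varphi)$ for a.e. $t>0$ and satisfies the differential inclusion pointwise almost everywhere. The standard route uses the monotonicity of $t\mapsto\varphi(J_{t/n}^n u_0)$ along the discrete scheme combined with the key a priori bound $t\|\partial^{\circ}\varphi(u(t))\|_H\leq\|u_0-z\|_H+2t\,\varphi(z)^{1/2}\cdot\text{(const)}$ obtained by testing with elements $z\in D(\partial\varphi)$ and passing to the limit via lower semicontinuity of $\varphi$ and weak compactness of difference quotients in $H$; getting this bound cleanly, without circularity, is the technically most delicate step.
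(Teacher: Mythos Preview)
The paper does not prove this theorem at all: Theorem~\ref{minty} is stated in the preliminaries (Section~\ref{subsec2.5}) as a classical result, with a bare citation to Minty~\cite{MINT62}, and is then used as a black box in the proof of Theorem~\ref{A03}. So there is no ``paper's own proof'' to compare your proposal against.

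That said, your outline is a faithful sketch of the standard Minty--Brezis argument (range condition via minimization of $\psi_\lambda$, Yosida approximation for the density $\overline{D(\varphi)}=\overline{D(\partial\varphi)}$, Crandall--Liggett for the semigroup, and Brezis' regularizing effect for subdifferentials to upgrade mild solutions to strong ones). The one place where your write-up is loose is the a priori bound you quote for the regularizing effect: the inequality $t\|\partial^{\circ}\varphi(u(t))\|_H\leq\|u_0-z\|_H+2t\,\varphi(z)^{1/2}\cdot\text{(const)}$ is not quite the correct form---the usual estimate is $\|\partial^{\circ}\varphi(u(t))\|_H^2\leq \frac{1}{t}\bigl(\varphi(u_0)-\inf\varphi\bigr)$ when $u_0\in D(\varphi)$, combined with the smoothing $\varphi(u(t))\leq\varphi(v)+\frac{1}{2t}\|u_0-v\|_H^2$ for arbitrary $u_0\in\overline{D(\varphi)}$ and $v\in D(\varphi)$, applied at an intermediate time. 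If you intend to actually carry out the proof rather than cite it, that step should be stated precisely.
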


\begin{definition}\label{NS01}
Let $\{T(t)\}_{t\geq0}$ be a (nonlinear) semigroup on a Hilbert lattice $H$ with ordering $\leq$,
let $X$ be a locally compact metric space, and $\nu$ a Borel regular measure on $X$.
\begin{enumerate}
\item[(a)]\,\,\,$\{T(t)\}_{t\geq0}$ is said to be {\bf order-preserving}, if
$$T(t)u\,\leq\,T(t)v\,\,\,\,\,\textrm{for all}\,\,\,t\geq0,\,\,\textrm{whenever}\,\,\,u,\,v\in H,\,\,u\leq v.$$
\item[(b)]\,\,\,$\{T(t)\}_{t\geq0}$ is called {\bf non-expansive} over $L^{q(\cdot)}(X,d\nu)$ ($q\in\mathcal{P}(X)$), if
$$\indent\indent\|T(t)u-T(t)v\|_{_{q(\cdot),X}}\,\leq\,\|u-v\|_{_{q(\cdot),X}},$$ for every $t\geq0$
and $u,\,v\in L^2(X,d\nu)\cap L^{q(\cdot)}(X,d\nu)$.
\item[(c)]\,\,\,$\{T(t)\}_{t\geq0}$ is said to be {\bf submarkovian}, if $\{T(t)\}_{t\geq0}$ is non-expansive over $L^{\infty}(X,d\nu)$.
\end{enumerate}
\end{definition}

\indent The next two well-known results characterize the order-preserve property and the submarkovian property of the
functional $\varphi$, respectively.\\

\begin{proposition}\label{CG01}
\,(see \cite{CIP-GR03})\, Let $\varphi:\;H\to(-\infty,+\infty]$ be a proper, convex,
lower semicontinuous functional on a real Hilbert lattice $H$, with effective domain $D(\varphi)$. Let
$\{T(t)\}_{t\geq0}$ be the (nonlinear) semigroup on $H$ generated by $\partial\varphi$. Then the following
assertions are equivalent.
\begin{enumerate}
\item[(i)] The semigroup $\{T(t)\}_{t\geq0}$ is order preserving.
\item[(ii)] For all $u,\,v\in H$ one has
$$\varphi\left(\frac{1}{2}\left(u+u\wedge v\right)\right)+\varphi\left(\frac{1}{2}\left(v+u\vee v\right)\right)\,\leq\,\varphi(u)+\varphi(v),$$
where $u\wedge v:=\inf\{u,v\}$ and $u\vee v:=\sup\{u,v\}$.
\end{enumerate}
\end{proposition}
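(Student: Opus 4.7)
The plan is to prove the equivalence by passing through the resolvent $J_\lambda:=(I+\lambda\partial\varphi)^{-1}$, which is single-valued and nonexpansive on $H$ by maximal monotonicity of $\partial\varphi$ (Theorem \ref{minty}). The Crandall--Liggett exponential formula $T(t)u=\lim_{n\to\infty}J^n_{t/n}u$, together with strong continuity of the lattice operations, implies that $\{T(t)\}_{t\geq0}$ is order-preserving iff every $J_\lambda$ is order-preserving. Thus the task reduces to characterizing order-preservation of $J_\lambda$ by condition (ii). The working variational characterization is that $v=J_\lambda u$ is the unique minimizer of $w\mapsto\tfrac{1}{2\lambda}\|w-u\|_H^2+\varphi(w)$, equivalently
\[
\varphi(w)\,\geq\,\varphi(v)+\tfrac{1}{\lambda}\langle u-v,\,w-v\rangle_H\qquad\forall\,w\in H.
\]

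For the direction (ii) $\Rightarrow$ (i), fix $\lambda>0$, take $u_1\leq u_2$, and set $v_i=J_\lambda u_i$. The key trick is to test the variational inequality for $v_1$ against $w_1:=\tfrac12(v_1+v_1\wedge v_2)$ and for $v_2$ against $w_2:=\tfrac12(v_2+v_1\vee v_2)$, chosen so that $w_1-v_1=-\tfrac12(v_1-v_2)^+$ and $w_2-v_2=\tfrac12(v_1-v_2)^+$. Summing the two inequalities and using $(u_2-v_2)-(u_1-v_1)=(u_2-u_1)+(v_1-v_2)$ yields
\[
\varphi(w_1)+\varphi(w_2)-\varphi(v_1)-\varphi(v_2)\,\geq\,\tfrac{1}{2\lambda}\langle u_2-u_1,\,(v_1-v_2)^+\rangle_H+\tfrac{1}{2\lambda}\|(v_1-v_2)^+\|_H^2.
\]
Applying (ii) to the pair $(v_1,v_2)$ makes the left-hand side $\leq0$; since $u_2-u_1\geq0$ and $(v_1-v_2)^+\geq0$ in the Hilbert lattice, the first term on the right is nonnegative, forcing $(v_1-v_2)^+=0$, hence $v_1\leq v_2$.

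For (i) $\Rightarrow$ (ii), fix $u,v\in D(\varphi)$ and work with the Moreau--Yosida regularization $\varphi_\lambda(w):=\tfrac{1}{2\lambda}\|w-J_\lambda w\|_H^2+\varphi(J_\lambda w)$, which is convex, Fr\'echet differentiable, and satisfies $\varphi_\lambda\nearrow\varphi$ pointwise on $D(\varphi)$ as $\lambda\downarrow0$. Using $J_\lambda(u\wedge v)\leq J_\lambda(u\vee v)$ granted by hypothesis, one evaluates $\varphi_\lambda$ at $u,v,u\wedge v,u\vee v$, exploits the parallelogram identity together with $u+v=(u\wedge v)+(u\vee v)$, and reverses the algebraic manipulation of the previous step to obtain the inequality (ii) at the $\varphi_\lambda$ level, with the midpoints $\tfrac12(u+u\wedge v)$ and $\tfrac12(v+u\vee v)$ arising as the natural convex averages dictated by the quadratic structure. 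Passing $\lambda\downarrow0$ and invoking lower semicontinuity of $\varphi$ on one side and monotone convergence of $\varphi_\lambda$ on the other side yields (ii).

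The main obstacle is direction (i) $\Rightarrow$ (ii): while the forward direction reduces to one clean test-function computation, the converse must identify the midpoints $\tfrac12(u+u\wedge v),\,\tfrac12(v+u\vee v)$ as the \emph{correct} convex combinations produced by the quadratic resolvent problem, and then pass to the limit $\lambda\downarrow0$ where only lower semicontinuity (not continuity) of $\varphi$ is available; one must carefully ensure that the inner-product norm in $H$ is compatible with the lattice operations so that $\|(v_1-v_2)^+\|_H$ controls the positive-part functional as expected in a Hilbert lattice.
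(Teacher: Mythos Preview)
The paper does not prove this proposition at all: it is quoted verbatim from \cite{CIP-GR03} as a known tool, so there is no ``paper's own proof'' to compare against. What can be assessed is whether your argument stands on its own.

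Your direction (ii)$\Rightarrow$(i) is clean and correct. Reducing to order-preservation of the resolvents via the Crandall--Liggett formula, and then testing the two variational inequalities for $v_i=J_\lambda u_i$ against $w_1=\tfrac12(v_1+v_1\wedge v_2)$ and $w_2=\tfrac12(v_2+v_1\vee v_2)$, is exactly the right move; the algebra you wrote out is accurate, and the conclusion $(v_1-v_2)^+=0$ follows once one assumes (as is standard for a Hilbert lattice in this context) that the positive cone is self-dual so that $\langle x,y\rangle_H\geq0$ for $x,y\geq0$ and $\langle z^+,z^-\rangle_H=0$.

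The direction (i)$\Rightarrow$(ii), however, is not a proof as written. Passing to the Moreau--Yosida regularization $\varphi_\lambda$ is a reasonable strategy, but the sentence ``reverses the algebraic manipulation of the previous step to obtain the inequality (ii) at the $\varphi_\lambda$ level'' hides the entire content of the argument. Concretely: knowing only that $J_\lambda(u\wedge v)\leq J_\lambda(u\vee v)$, it is not at all clear how the parallelogram identity and $u+v=(u\wedge v)+(u\vee v)$ combine to yield $\varphi_\lambda\bigl(\tfrac12(u+u\wedge v)\bigr)+\varphi_\lambda\bigl(\tfrac12(v+u\vee v)\bigr)\leq\varphi_\lambda(u)+\varphi_\lambda(v)$. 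A naive attempt via convexity reduces this to the submodularity-type inequality $\varphi_\lambda(u\wedge v)+\varphi_\lambda(u\vee v)\leq\varphi_\lambda(u)+\varphi_\lambda(v)$, which is strictly stronger than (ii) and does not follow from order-preservation of $J_\lambda$ alone. You need to exhibit an explicit computation here (as in \cite{CIP-GR03} or in Barth\'elemy's earlier work) rather than gesture at a reversal; without it this direction has a genuine gap.
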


\begin{proposition}\label{CG02}
\,(see \cite{CIP-GR03,NITTKA2010})\, Let $\varphi:L^2(X,d\nu)\rightarrow(-\infty,+\infty]$ be a proper,
convex, lower semicontinuous functional. Let $\{T(t)\}_{t\geq0}$ be the (nonlinear) semigroup on $L^2(X,d\nu)$ generated by
$A=\partial\varphi$. Assume that $\{T(t)\}_{t\geq0}$ is order preserving. Then, the following assertions are equivalent.
\begin{enumerate}
\item[(i)] The semigroup $\{T(t)\}_{t\geq0}$ is submarkovian.
\item[(ii)] For all $u,v\in L^2(X,d\nu)$ and $\alpha>0$,
$$\varphi\bigg(v+g_\alpha(u,v)\bigg)+\varphi\bigg(u-g_\alpha(u,v)\bigg)\,\leq\,\varphi(u)+\varphi(v),$$
where
$$g_{\alpha}(u,v):= \frac{1}{2}\bigg[(u-v+\alpha)^+-(u-v-\alpha)^-\bigg],$$
with $u^+:=\sup\{u,0\}$, and $u^-:=\sup\{-u,0\}$,
\item[(iii)] The operator $A$ is accretive on $L^{\infty}(X,d\nu)$, that is,
$$\|u-v\|_{_{\infty,X}}\,\leq\,\|(I-\eta A)(u-v)\|_{_{\infty,X}},\,\,\,\,\,\,\,\textit{for all}\,\,\,u,\,v
\in D(A)\,\,\,\textit{and}\,\,\,\eta>0.$$
\end{enumerate}
\end{proposition}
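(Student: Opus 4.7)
The plan is to prove the chain of implications via the two equivalences \textbf{(i)}$\Leftrightarrow$\textbf{(iii)} and \textbf{(ii)}$\Leftrightarrow$\textbf{(iii)}. The first equivalence is the semigroup/resolvent duality from Crandall--Liggett theory, while the second is a variational characterization of $L^\infty$-accretivity of $\partial\varphi$ via the truncation functional $g_\alpha$. Throughout, let $J_\eta:=(I+\eta A)^{-1}$ denote the (single-valued) resolvent of $A=\partial\varphi$, which exists and is a contraction on $L^2(X,d\nu)$ by Theorem~\ref{minty}.

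\emph{Step 1:} \textbf{(i)}$\Leftrightarrow$\textbf{(iii)}. Since $A$ is maximal monotone, the exponential formula $T(t)f=\lim_{n\to\infty} J_{t/n}^{n}f$ holds in $L^2(X,d\nu)$. The $L^\infty$-accretivity inequality in \textbf{(iii)} is, by definition, the non-expansivity of $J_\eta$ on $L^\infty\cap D(A)$; iterating this bound and passing to the limit in the exponential formula yields non-expansivity of $T(t)$ on $L^\infty$, which is submarkovianity in the sense of Definition~\ref{NS01}(c). Conversely, if $\{T(t)\}$ is submarkovian, then differentiating the contraction inequality for $T(t)$ at $t=0^+$ along the strong solutions produced in Theorem~\ref{minty} and using the fact that $J_\eta=\int_0^\infty\tfrac{1}{\eta}e^{-s/\eta}T(s)\,ds$-type representations recover $L^\infty$-non-expansivity of the resolvent, which, rewriting $f=u+\eta Au$ and $g=v+\eta Av$, is exactly \textbf{(iii)}.

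\emph{Step 2:} \textbf{(ii)}$\Rightarrow$\textbf{(iii)}. The structure of $g_\alpha$ means $v+g_\alpha(u,v)$ and $u-g_\alpha(u,v)$ are the pair obtained from $(u,v)$ by ``symmetrizing'' above the level $\alpha$: on $\{|u-v|\le\alpha\}$ they coincide with $(u,v)$, and on $\{|u-v|>\alpha\}$ they collapse the difference to $\pm\alpha$. Pick $u_0,v_0\in D(A)$, $u_1\in\partial\varphi(u_0)$, $v_1\in\partial\varphi(v_0)$, and apply the defining inequalities of the subdifferential with the test elements $u_0-g_\alpha(u_0,v_0)$ and $v_0+g_\alpha(u_0,v_0)$:
\begin{align*}
\varphi\bigl(u_0-g_\alpha(u_0,v_0)\bigr)-\varphi(u_0)&\ \ge\ -\langle u_1,\,g_\alpha(u_0,v_0)\rangle_{L^2},\\
\varphi\bigl(v_0+g_\alpha(u_0,v_0)\bigr)-\varphi(v_0)&\ \ge\ \langle v_1,\,g_\alpha(u_0,v_0)\rangle_{L^2}.
\end{align*}
Summing and invoking \textbf{(ii)} yields the bracket inequality $\langle u_1-v_1,\,g_\alpha(u_0,v_0)\rangle_{L^2}\ge 0$ for every $\alpha>0$. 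A standard argument (letting $\alpha\uparrow\|u_0-v_0\|_\infty$ and testing against the indicators of the level sets $\{u_0-v_0>\alpha\}$ and $\{u_0-v_0<-\alpha\}$ to which $g_\alpha$ reduces) shows that this family of bracket inequalities is equivalent to the pointwise-a.e.\ inequality $\|u_0-v_0\|_\infty\le\|(u_0-v_0)+\eta(u_1-v_1)\|_\infty$ for all $\eta>0$, i.e. \textbf{(iii)}.

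\emph{Step 3:} \textbf{(iii)}$\Rightarrow$\textbf{(ii)}. Reversing the variational argument, set $u:=J_\eta(u_0-g_\alpha(u_0,v_0))$, $v:=J_\eta(v_0+g_\alpha(u_0,v_0))$, so $u_1':=(u_0-g_\alpha-u)/\eta\in\partial\varphi(u)$ and $v_1':=(v_0+g_\alpha-v)/\eta\in\partial\varphi(v)$; then use the hypothesis \textbf{(iii)} applied to $(u,v)$ together with the order-preserving assumption (which guarantees that the truncation $g_\alpha$ acts compatibly with the resolvents, by Proposition~\ref{CG01}) to pass to the limit $\eta\to 0^+$ and obtain \textbf{(ii)}. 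The key tool is that, under order-preservation, $J_\eta$ commutes appropriately with the lattice operations $(\,\cdot\,)^+$ and $(\,\cdot\,)^-$ built into $g_\alpha$, which is precisely why the assumption of order-preservation is included in the statement.

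The main obstacle is Step~2's passage from the $L^2$-bracket condition $\langle u_1-v_1,g_\alpha(u_0,v_0)\rangle\ge 0$ to the honest $L^\infty$-accretivity inequality: this requires a careful truncation argument showing that the ``excess mass'' of $u_0-v_0$ above each level $\alpha$ cannot be reduced by adding $\eta(u_1-v_1)$, and is the place where the structure of $g_\alpha$ as a simultaneous positive-/negative-part truncation is essential. Both \cite{CIP-GR03} and \cite{NITTKA2010} develop the requisite machinery, and the argument sketched above is essentially the one therein, adapted to the present functional-analytic setup.
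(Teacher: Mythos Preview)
The paper does not prove this proposition at all: it is stated as a preliminary result with the citation ``(see \cite{CIP-GR03,NITTKA2010})'' and no argument is given. There is therefore no ``paper's own proof'' to compare your attempt against; the authors simply quote the result and apply it later (in the proof of Theorem~\ref{A03}) by verifying condition~(ii) for their functional $\Phi_\sigma$.

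That said, your sketch has a genuine gap in Step~1. The integral representation $J_\eta = \int_0^\infty \eta^{-1} e^{-s/\eta} T(s)\,ds$ you invoke to pass from submarkovianity of the semigroup back to $L^\infty$-non-expansivity of the resolvent is a \emph{linear} formula (the Laplace transform of a linear $C_0$-semigroup); it is not available for nonlinear semigroups generated by subdifferentials. For nonlinear $A=\partial\varphi$ the resolvent $J_\eta$ cannot in general be recovered from $\{T(t)\}$ by such an integral, so your argument for (i)$\Rightarrow$(iii) does not go through as written. The standard route in \cite{CIP-GR03} is instead to work directly at the level of the functional $\varphi$ and show that (ii) is equivalent to the $L^\infty$-contractivity of the resolvent (your Steps~2--3, which are broadly correct in spirit though Step~3 is only loosely sketched), and then close the loop (i)$\Leftrightarrow$(iii) via the exponential formula in one direction and a Gronwall-type argument on $\|T(t)u - T(t)v\|_\infty$ in the other, not via a resolvent integral.
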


\indent The next definition brings the notion of ultracontractivity for nonlinear semigroups. If the semigroup is linear, then
the classical ultracontractivity property in the sense of \cite[Section 2]{DAV} coincide with the Lipschitz-ultracontractivity
property explained below.\\

\begin{definition}\label{NS02}
Let $X$ be a locally compact metric space, and $\nu$ a Borel regular measure on $X$.
Let $\{T(t)\}_{t\geq0}$ be a (nonlinear) submarkovian semigroup on $L^2(X,d\nu)$.
\begin{enumerate}
\item[(a)]\,\,\,$\{T(t)\}_{t\geq0}$ is said to be {\bf Lipschitz-utracontractive}, if there exist constants $c_1>0$ and $\alpha>1/2$
such that $$\|T(t)u-T(t)v\|_{_{\infty}}\,\leq\,c_1\,t^{-\alpha}\|u-v\|_{_{2}},\,\,\,\indent\,\textrm{for all}\,\,\,\,\,\,\,\,u,\,v\in L^2(X,d\nu)\,\,\,\,\textrm{and}\,\,\,\,t\in(0,1].$$
\item[(b)]\,\,\,$\{T(t)\}_{t\geq0}$ is said to be {\bf H\"older-utracontractive}, if there exist constants $c_1,\,\alpha>0$,
and $\gamma\in(0,1)$ such that $$\|T(t)u-T(t)v\|_{_{\infty}}\,\leq\,c_1\,t^{-\alpha}\|u-v\|^{\gamma}_{_{2}},\,\,\,\,\,\,\,\,\indent\,
\textrm{for all}\,\,\,\,u,\,v\in L^2(X,d\nu)\,\,\,\,\textrm{and}\,\,\,\,t\in(0,1].$$
\end{enumerate}
\end{definition}
\indent\\

\section{The parabolic problem}\label{sec3}

\indent In this section we turn our attention of the well-posedness of the quasi-linear parabolic problem with variable exponents
and pure Wentzell boundary conditions, on bounded anisotropic $p(\cdot)$-extension domains with Lipschitz boundaries.

\subsection{The Cauchy problem over $\mathbb{X\!}^{\,r(\cdot)}(\overline{\Omega})$}\label{subsec3.1}

\indent Let $\Omega\subseteq\mathbb{R\!}^N$ be a bounded anisotropic $p(\cdot)$-extension domain with Lipschitz boundary,
for $N\geq3$, and let $(\pp,\,\qq)\in C^{0,1}(\overline{\Omega})^N\times C^{0,1}(\overline{\Omega})^{N-1}$
be such that $1<p^-_m\leq p^+_M<\infty$ and $1<q^-_m\leq q^+_M<\infty$.
Given $\alpha\in L^{\infty}(\overline{\Omega})$ and $\beta\in L^{\infty}(\Gamma)$ with $\displaystyle\inf_{x\in\overline{\Omega}}\alpha(x)\geq\alpha_0$
and $\displaystyle\inf_{x\in\Gamma}\beta(x)\geq\beta_0$ for some constants $\alpha_0,\,\beta_0>0$,  and
$\mathbf{u}_0\in\mathbb{X\!}^{\,r(\cdot)}(\overline{\Omega})$ for $r\in\mathcal{P}(\overline{\Omega})$ with
$1\leq r^-\leq r^+\leq\infty$, we want to investigate the
well-posedness of the quasi-linear parabolic equation with fully dynamical anisotropic Wentzell boundary conditions, formally defined by\\
\begin{equation}
\label{4.1.00}\left\{
\begin{array}{lcl}
u_t-\Delta_{\pp}u+\alpha|u|^{p_M(\cdot)-2}u\,=\,0\indent\indent\indent\indent\indent\,\,\,\,\,\,\,\,\,
\,\,\,\,\,\,\,\,\,\,\,\indent\indent\textrm{in}\,\,\,\Omega\times(0,\infty),\\
u_t+\displaystyle\sum_{i=1}^N |\partial_{x_i}u|^{p_i(\cdot)-2}\partial_{x_i}u\; \nu_{i}-\Delta_{_{\qq,\Gamma}}u+
\beta|u|^{q_M(\cdot)-2}u\,=\,0\,\,\,\,\,\,\,\,\,\textrm{on}\,\,\,\Gamma\times(0,\infty),\\
u(0,x)\,=\,u_0(x)\indent\indent\indent\indent\indent\,\,\,\,\,\,\,\,
\,\,\,\,\indent\indent\,\,\,\,\,\,\,\,\,\indent\indent\indent\indent\indent\,\,\textrm{in}\,\,\,\overline{\Omega},\\
\end{array}
\right.
\end{equation}\indent\\
To begin with the discussion, we define the functional $\Phi_{\sigma}:\mathbb{X\!}^{\,2}(\overline{\Omega})\rightarrow[0,\infty]$ by
\begin{equation}\label{4.1.01}
\Phi_{\sigma}(\mathbf{u}):=\left\{
\begin{array}{lcl}
\mathcal{E}_{_{\Omega}}(\mathbf{u},\mathbf{u})+\mathcal{E}_{_{\Gamma}}(\mathbf{u},\mathbf{u}),
\,\,\,\,\,\textrm{if}\,\,\mathbf{u}\in D(\Phi_{\sigma}),\\[1.5ex]
+\infty,\,\,\,\,\,\,\,\,\,\,\,\,\,\textrm{if}\,\,\mathbf{u}\in\mathbb{X\!}^{\,2}(\overline{\Omega})\setminus D(\Phi_{\sigma}),\\
\end{array}
\right.
\end{equation}
with effective domain
$$D(\Phi_{\sigma}):=\vpqb\cap\mathbb{X\!}^{\,2}(\overline{\Omega}),$$\indent\\
where we recall that $\vpqb$ is defined by (\ref{Pair-Space}), and we split the functional into the interior part\\
$$\mathcal{E}_{_{\Omega}}(\mathbf{u},\mathbf{v}):=\displaystyle\int_{\Omega}\displaystyle\sum^N_{i=1}\left(\frac{|\pxi u|^{p_i(x)-2}\pxi u\pxi v}
{p_i(x)}\right)\,dx+\displaystyle\int_{\Omega}\alpha\frac{|u|^{p_M(x)-2}uv}{p_M(x)}\,dx$$
and the boundary part
$$\mathcal{E}_{_{\Gamma}}(\mathbf{u},\mathbf{v}):=\displaystyle\int_{\Gamma}\displaystyle\sum^{N-1}_{j=1}\left(\frac{|\pxti u|^{q_j(x)-2}\pxti u\pxti v}
{q_j(x)}\right)\,d\sigma+\displaystyle\int_{\Gamma}\beta\frac{|u|^{q_M(x)-2}uv}{q_M(x)}\,d\sigma,$$\indent\\
for each $\mathbf{u},\,\mathbf{v}\in D(\Phi_{\sigma})$. We start by establishing the following three results, whose arguments run in a
similar way as in \cite{VELEZ2012-3}.\\

\begin{lemma}\label{A01}
The functional $\Phi_{\sigma}$ defined in (\ref{4.1.01}) is a proper, convex,
lower semicontinuous functional on $\mathbb{X\!}^{\,2}(\overline{\Omega})$.
\end{lemma}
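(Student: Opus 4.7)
The plan is to verify each of the three properties in turn.

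For \emph{properness}, I would observe that the pair $\mathbf{0}=(0,0|_{_{\Gamma}})$ belongs to $D(\Phi_{\sigma})=\vpqb\cap\mathbb{X\!}^{\,2}(\overline{\Omega})$ and satisfies $\Phi_{\sigma}(\mathbf{0})=0$, so the effective domain is non-empty and $\Phi_{\sigma}\not\equiv+\infty$. For \emph{convexity}, I would use that each integrand of the form $\xi\mapsto|\xi|^{r(x)}/r(x)$ with $r(x)\geq1$ is pointwise convex in $\xi$. The maps $u\mapsto\pxi u$, $u\mapsto\pxti u|_{_{\Gamma}}$, $u\mapsto u|_{_{\Gamma}}$ and $u\mapsto u$ are linear, so each integrand is convex in $\mathbf{u}$. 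Summation, integration against the nonnegative weights $\alpha,\beta$, and integration against $dx$ and $d\sigma$ preserve convexity, and $D(\Phi_{\sigma})$ is a vector subspace of $\mathbb{X\!}^{\,2}(\overline{\Omega})$, hence convex.

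The main work is \emph{lower semicontinuity} with respect to the $\mathbb{X\!}^{\,2}$-topology. Given a sequence $\mathbf{u}_n\to\mathbf{u}$ in $\mathbb{X\!}^{\,2}(\overline{\Omega})$ with $L:=\liminf_n\Phi_{\sigma}(\mathbf{u}_n)<\infty$, I would extract a subsequence (not relabelled) for which $\Phi_{\sigma}(\mathbf{u}_n)\to L$ and is uniformly bounded. Because $\alpha\geq\alpha_0>0$ and $\beta\geq\beta_0>0$, this produces uniform bounds on the modulars $\Theta_{_{p_i(\cdot),\Omega}}(\pxi u_n)$, $\Theta_{_{p_M(\cdot),\Omega}}(u_n)$, $\Theta_{_{q_j(\cdot),\Gamma}}(\pxti u_n)$ and $\Theta_{_{q_M(\cdot),\Gamma}}(u_n)$ for all $i,j$, which by (\ref{L4})--(\ref{L5}) transfer to uniform norm bounds; thus $\{\mathbf{u}_n\}$ is bounded in $\vpqb$.

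By reflexivity of $\vpqb$, a further subsequence converges weakly in $\vpqb$ to some $\mathbf{v}\in\vpqb$. Applying the compact embeddings from Theorem \ref{tembcont}(a), Theorem \ref{turmanoua2cont} and Remark \ref{Rem-V-space}, I would extract a sub-subsequence converging almost everywhere on $\Omega$ and on $\Gamma$; since $\mathbf{u}_n\to\mathbf{u}$ strongly in $\mathbb{X\!}^{\,2}$ also yields a.e.\ convergence along a subsequence, uniqueness of pointwise limits forces $\mathbf{v}=\mathbf{u}$, so $\mathbf{u}\in D(\Phi_{\sigma})$. To close the argument, I would invoke Mazur's lemma: $\Phi_{\sigma}|_{\vpqb}$ is convex and continuous on $\vpqb$ (continuity follows from the equivalence between modular and norm convergence when $r^+<\infty$, i.e.\ (\ref{L06})), hence weakly sequentially lower semicontinuous on $\vpqb$. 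This yields $\Phi_{\sigma}(\mathbf{u})\leq\liminf_n\Phi_{\sigma}(\mathbf{u}_n)=L$, as desired.

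The main technical subtlety I anticipate lies in the identification step: ensuring that the weak limit of the extracted subsequence in $\vpqb$ coincides with the strong $\mathbb{X\!}^{\,2}$-limit requires combining the compact anisotropic Sobolev embeddings in the interior with the boundary-trace compact embedding, and some care is needed to treat the interior and boundary components simultaneously, given that $\pp$ and $\qq$ are in general unrelated.
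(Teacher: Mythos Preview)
Your proposal is correct and takes a genuinely different route from the paper's proof. Both arguments agree on properness and convexity, and both begin the lower-semicontinuity proof by extracting a subsequence along which $\Phi_{\sigma}(\mathbf{u}_n)$ is bounded and converting this into a uniform bound on $\{\mathbf{u}_n\}$ in $\vpqb$. From there the approaches diverge. You pass to a weak limit $\mathbf{v}$ in $\vpqb$ by reflexivity, use the compact anisotropic embeddings of Theorems~\ref{tembcont} and~\ref{turmanoua2cont} (via Remark~\ref{Rem-V-space}) to upgrade to a.e.\ convergence on $\Omega$ and on $\Gamma$, identify $\mathbf{v}$ with the $\mathbb{X}^{\,2}$-limit $\mathbf{u}$ by uniqueness of pointwise limits, and then conclude by the standard fact that a convex, strongly lower-semicontinuous functional on a Banach space is weakly sequentially lower-semicontinuous. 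The paper instead applies Mazur's lemma to produce convex combinations $\mathbf{v}_n$ converging \emph{strongly} in $D(\Phi_{\sigma})$, and for the delicate trace-identification step ($w=u|_{_{\Gamma}}$) it invokes the relative $p_m(\cdot)$-capacity machinery of Section~\ref{subsec2.4}: strong convergence in $\widetilde{W}^{1,p_m(\cdot)}(\Omega)$ gives $p_m(\cdot)$-q.e.\ convergence on $\Gamma$, and the $\textrm{Cap}_{_{p_m(\cdot),\Omega}}$-admissibility of $\sigma$ (Example~\ref{Example 2.4.12.}) then yields $\sigma$-a.e.\ convergence on $\Gamma$.

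Your argument is more self-contained in the present setting, since it uses only the compact embeddings already established in the paper and avoids the capacity apparatus entirely. The paper's route, by contrast, is more robust: the capacity-based trace identification does not rely on compactness of the embedding, and would continue to work in frameworks (e.g.\ rougher domains or borderline exponents) where compact embeddings into Lebesgue spaces on $\Gamma$ may fail but quasi-continuous representatives are still available. One minor remark: for the final step you only need strong lower semicontinuity of $\Phi_{\sigma}$ on $\vpqb$ (plus convexity) to obtain weak sequential lower semicontinuity; full continuity is not required, and strong lsc follows immediately from Fatou's lemma along an a.e.\ convergent subsequence.
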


\begin{proof}
It is clear that $\Phi_{\sigma}$ is proper and convex. To establish lower semicontinuity, take a sequence
$\{\mathbf{u}_n\}\subseteq D(\Phi_{\sigma})$ such that $\displaystyle\lim_{n\rightarrow\infty}\mathbf{u}_n=(u,w)$ in $\mathbb{X\!}^{\,2}(\overline{\Omega})$.
If \,$\displaystyle\liminf_{n\rightarrow\infty}\Phi_{\sigma}(\mathbf{u}_n)=+\infty$ we are finished, so we assume that
\,$\displaystyle\liminf_{n\rightarrow\infty}\Phi_{\sigma}(\mathbf{u}_n)<+\infty$. Take a subsequence of $\{\mathbf{u}_n\}$\,,
(denoted in the same way by $\mathbf{u}_n$), such that $\displaystyle\lim_{n\rightarrow\infty}\Phi_{\sigma}(\mathbf{u}_n)$ equals a constant.
Since $\pxi u_n$ and $\pxti u_n$ are bounded over $L^{p_i(\cdot)}(\Omega)$ and $L^{q_j(\cdot)}(\Gamma)$ respectively, for each $i\in\{1,\ldots,N\}$
and $j\in\{1,\ldots,N-1\}$, it follows that the sequences $\{(\pxi u_n,\pxti u_n)\}$ are bounded on the space
$\mathbb{X\!}^{\,p_m(\cdot),q_m(\cdot)}(\overline{\Omega})$ for each $(i,j)\in\{1,\ldots,N\}\times\{1,\ldots,N-1\}$.
Thus, passing to subsequences if necessary, we may assume that
$\{\nabla u_n\}$ and $\{\nabla_{^{\Gamma}}u_n\}$ converge weakly in $L^{p_m(\cdot)}(\Omega)^N$ and $L^{q_m(\cdot)}(\Gamma)^{N-1}$, respectively.
Next, consider the Banach space $D(\Phi_{\sigma}):=\vpqb\cap\mathbb{X\!}^{\,2}(\overline{\Omega})$, endowed with the norm
$$|\|\mathbf{w}\||_{_{D(\Phi_{\sigma})}}:=|\|\mathbf{w}\||_{_{\vpqb}}+|\|\mathbf{w}\||_{_{2}},\indent
\textrm{for all}\,\,\,\mathbf{w}\in D(\Phi_{\sigma})$$
Then we see that $\{\mathbf{u}_n\}$ is a bounded sequence in the uniformly convex set $D(\Phi_{\sigma})$.
Then, we let $\mathbf{v}_n$ be a convex combination of $\mathbf{u}_n$, such that $v_n\stackrel{n\rightarrow\infty}{\longrightarrow}v$ in $\vpq\cap L^2(\Omega)$.
Then $\displaystyle\lim_{n\rightarrow\infty}\mathbf{v}_n=(v,\tilde{v})$ in $D(\Phi_{\sigma})$. Then $v_n\stackrel{n\rightarrow\infty}{\longrightarrow}v$ in $\widetilde{W}^{1,p_m(\cdot)}(\Omega)$. By the uniqueness of the limit we see
that $u=v$ a.e. on $\Omega$, and moreover it follows (by taking a subsequence if necessary) that $v_n\stackrel{n\rightarrow\infty}
{\longrightarrow}v$ $\,p_m(\cdot)$-q.e. on $\Gamma$ (see Definition \ref{Definition 2.4.3.}(b)).
As $\sigma$ is $\,\displaystyle\textrm{Cap}_{_{p_m(\cdot),\Omega}}$-admissible
(see Example \ref{Example 2.4.12.}), the previous convergence implies that
$v_n\stackrel{n\rightarrow\infty}{\longrightarrow}v$ $\,\sigma$-a.e. on $\Gamma$. Since in addition
$\displaystyle\lim_{n\rightarrow\infty}v_n=v$ in $L^{q_M(\cdot)}(\Gamma)$, by virtue of the uniqueness of the limit we
conclude that $\tilde{v}=v|_{_{\Gamma}}$ and $v|_{_{\Gamma}}=w$ $\,\sigma$-a.e. on $\Gamma$. If $w\neq u|_{_{\Gamma}}$, since $\{\mathbf{v}_n\}
\subseteq D(\Phi_{\sigma})$, we have $\Phi_{\sigma}(\mathbf{v}_n)<+\infty$, but $\Phi_{\sigma}(u,w)=+\infty$. However, since the operators
$\pxi$ and $\pxti$ are both closed in $L^{p_i(\cdot)}(\Omega)$ and $L^{q_j(\cdot)}(\Gamma)$, respectively (for each $i\in\{1,\ldots,N\}$
and $j\in\{1,\ldots,N-1\}$), it follows that
$\displaystyle\lim_{n\rightarrow\infty}\pxi v_n=\pxi u$ and $\displaystyle\lim_{n\rightarrow\infty}\pxti v_n=\pxti v$. Then, by virtue of the
Dominated Convergence Theorem of Lebesgue, one has $+\infty=\Phi_{\sigma}(u,w)=\displaystyle\lim_{n\rightarrow\infty}\Phi_{\sigma}(\mathbf{v}_n)<+\infty$,
a clear contradiction. Thus $w=u|_{_{\Gamma}}$. Finally, the convexity of $\Phi_{\sigma}$ entails that
$$\Phi_{\sigma}(\mathbf{u})\,=\,\displaystyle\liminf_{n\rightarrow\infty}\Phi_{\sigma}(\mathbf{v}_n)\,\leq\,
\displaystyle\liminf_{n\rightarrow\infty}\Phi_{\sigma}(\mathbf{u}_n),$$ and thus $\Phi_{\sigma}$ is
semicontinuous over $\mathbb{X\!}^{\,2}(\overline{\Omega})$, as desired.
\end{proof}

\indent In the next result we compute the subdifferential of the functional $\Phi_{\sigma}$, which is established in the same way as in
\cite{VELEZ2012-2,VELEZ2012-3}. Thus we will omit its proof.\\

\begin{lemma}\label{A02}
Let $\partial\Phi_{\sigma}$ be the subdifferential associated with $\Phi_{\sigma}$, and let $\mathbf{f}:=(f,f|_{_{\Gamma}})
\in\mathbb{X\!}^{\,2}(\overline{\Omega})$ and $\mathbf{u}\in D(\Phi_{\sigma})$. Then $\mathbf{f}\in\partial\Phi_{\sigma}(u)$ if and only if
\begin{equation}\label{4.1.02}
\left\{
\begin{array}{lcl}
\Delta_{\pp}u+\alpha|u|^{p_M(\cdot)-2}u\,=\,f\indent\indent\indent\indent\indent\indent\indent\indent\,\,\,\,\,
\indent\indent\indent\indent\indent\indent\,\,\,\,\,\,\,\indent\indent\indent\textit{in}\,\,\mathcal{D}(\Omega)^{\ast},\\
(\Delta_{\pp}u+\alpha|u|^{p_M(\cdot)-2}u)|_{_{\Gamma}}+\displaystyle\sum_{i=1}^N
|\partial_{x_i}u|^{p_i(\cdot)-2}\partial_{x_i}u\; \nu_{i}-\Delta_{_{\qq,\Gamma}}u
+\beta|u|^{q_M(\cdot)-2}u\,=\,0\,\,\,\,\,\,\textit{weakly on}\,\,\Gamma.\\
\end{array}
\right.
\end{equation}
\end{lemma}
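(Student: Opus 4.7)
The plan is to use that $\Phi_\sigma$, being a sum of strictly convex $C^1$ integrands in the variable-exponent anisotropic Lebesgue spaces, is Gâteaux-differentiable on $D(\Phi_\sigma)$; for such a convex functional the subdifferential is single-valued at every point of the domain and coincides with the Gâteaux derivative. The identification of $\partial\Phi_\sigma(\mathbf{u})$ with the pair (\ref{4.1.02}) then reduces to applying integration-by-parts identities, first in $\Omega$ and then on the compact Riemannian manifold $\Gamma$ (which, as the boundary of a Lipschitz domain, has no boundary of its own), and reading off the interior and boundary equations separately by varying the test pair.

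First I would compute, for each $\mathbf{u}\in D(\Phi_\sigma)$ and every direction $\mathbf{v}=(v,v|_{_\Gamma})\in D(\Phi_\sigma)$, the Gâteaux derivative
\begin{align*}
\langle\Phi'_\sigma(\mathbf{u}),\mathbf{v}\rangle \,&=\, \sum_{i=1}^N\int_\Omega|\pxi u|^{p_i(x)-2}\pxi u\,\pxi v\,dx + \int_\Omega\alpha\,|u|^{p_M(x)-2}u\,v\,dx\\
&\quad + \sum_{j=1}^{N-1}\int_\Gamma|\pxti u|^{q_j(x)-2}\pxti u\,\pxti v\,d\sigma + \int_\Gamma\beta\,|u|^{q_M(x)-2}u\,v|_{_\Gamma}\,d\sigma,
\end{align*}
which is legitimate because the exponents lie in $C^{0,1}$ with $1<p_m^-\leq p_M^+<\infty$ and $1<q_m^-\leq q_M^+<\infty$, so the chain rule under the integral and dominated convergence apply. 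By convexity of $\Phi_\sigma$ (Lemma \ref{A01}), the condition $\mathbf{f}\in\partial\Phi_\sigma(\mathbf{u})$ is equivalent to $\langle\mathbf{f},\mathbf{v}\rangle_{\mathbb{X}^2(\overline{\Omega})}=\langle\Phi'_\sigma(\mathbf{u}),\mathbf{v}\rangle$ for every admissible $\mathbf{v}$. Testing first with $\mathbf{v}=(v,0)$ for $v\in C_c^\infty(\Omega)$ kills every boundary contribution; interpreting $\sum_i\int_\Omega|\pxi u|^{p_i(\cdot)-2}\pxi u\,\pxi v\,dx$ in the dual space $\mathcal{D}(\Omega)^\ast$ as $-\int_\Omega\Delta_{\pp}u\cdot v\,dx$ then produces the first identity of (\ref{4.1.02}) (up to the author's sign convention coming from $\partial\Phi_\sigma=-\partial_t$). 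Next, for a general smooth pair $\mathbf{v}=(v,v|_{_\Gamma})$, the classical Green identity on $\Omega$ produces the conormal boundary contribution $\int_\Gamma\sum_i|\pxi u|^{p_i-2}\pxi u\,\nu_i\,v|_{_\Gamma}\,d\sigma$, and the tangential divergence theorem on the closed manifold $\Gamma$ turns the boundary Dirichlet form into $-\int_\Gamma\Delta_{_{\qq,\Gamma}}u\cdot v|_{_\Gamma}\,d\sigma$. Substituting back into the Gâteaux duality and cancelling the interior identity already obtained leaves exactly the second equation of (\ref{4.1.02}) in the weak sense on $\Gamma$.

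The main technical obstacle is making these integration-by-parts formulas rigorous in the anisotropic variable-exponent setting: one must give a precise meaning to $\Delta_{\pp}u$ and to the conormal trace $\sum_{i=1}^N|\pxi u|^{p_i-2}\pxi u\,\nu_i$ as objects in suitable negative-order anisotropic Sobolev spaces, and then justify the tangential Stokes formula for the $\qq$-Laplace–Beltrami operator on $\Gamma$. The trace compatibility is supplied by the fact that $\sigma$ is $\textrm{Cap}_{_{p_m(\cdot),\Omega}}$-admissible (Example \ref{Example 2.4.12.}), which guarantees that the traces appearing in $\vpqb$ are well-defined and the associated dualities on $\Gamma$ make sense; the density of $C^1(\Gamma)$ in $W^{1,\qq}(\Gamma)$ built into our definition provides enough test functions to separate the boundary functional and recover the pointwise equation in the weak sense. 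The converse implication is the easy half: assuming (\ref{4.1.02}) and reversing each of the above integration-by-parts steps recovers the Gâteaux identity, and convexity of $\Phi_\sigma$ promotes this pointwise equality into the subgradient inequality $\Phi_\sigma(\mathbf{v})-\Phi_\sigma(\mathbf{u})\geq\langle\mathbf{f},\mathbf{v}-\mathbf{u}\rangle_{\mathbb{X}^2(\overline{\Omega})}$ for all $\mathbf{v}\in\mathbb{X}^2(\overline{\Omega})$.
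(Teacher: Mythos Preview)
The paper does not actually give its own proof of this lemma; it simply states that the computation ``is established in the same way as in \cite{VELEZ2012-2,VELEZ2012-3}'' and omits the argument. Your outline---showing that $\Phi_\sigma$ is G\^ateaux differentiable on the linear subspace $D(\Phi_\sigma)$, using convexity to identify $\partial\Phi_\sigma(\mathbf{u})$ with the set of $\mathbb{X}^{\,2}(\overline{\Omega})$-representatives of that derivative, then testing first with $v\in C_c^\infty(\Omega)$ (so that $(v,v|_\Gamma)=(v,0)\in D(\Phi_\sigma)$) to extract the distributional interior equation, and afterwards with general $\mathbf{v}\in D(\Phi_\sigma)$ to obtain the weak boundary equation via the Gauss--Green identity on $\Omega$ and the tangential divergence theorem on the closed manifold $\Gamma$---is exactly the standard argument those references carry out in the isotropic variable-exponent setting. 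Your proposal therefore matches the intended approach, and the anisotropic extension introduces no new difficulty at this step beyond the bookkeeping you already flagged (giving distributional meaning to $\Delta_{\pp}u$ and to the anisotropic conormal term).
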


\indent Now we to establish the well-posedness of the parabolic problem (\ref{4.1.00}).\\

\begin{theorem}\label{A03}
The operator $\mathcal{A}_{\sigma}:=\partial\Phi_{\sigma}$ generates a order-preserving submarkovian
$C_0$-semigroup $\left\{T_{\sigma}(t)\right\}_{t\geq0}$ on $\mathbb{X\!}^{\,2}(\overline{\Omega})$, which is
is non-expansive over $\mathbb{X\!}^{\,r(\cdot)}(\overline{\Omega})$ for all $r\in\mathcal{P}(\overline{\Omega})$ fulfilling $1\leq r^-\leq r^{+}\leq\infty$.
Consequently, for each $r\in\mathcal{P}(\overline{\Omega})$ with $1\leq r^-\leq r^{+}<\infty$, and for each $\mathbf{u}_0\in
\mathbb{X\!}^{\,r(\cdot)}(\overline{\Omega})$, the function $\mathbf{u}(\cdot)=T_{\sigma}(\cdot)\mathbf{u}_0$
is the (unique) strong solution of the parabolic problem (\ref{4.1.00}).
\end{theorem}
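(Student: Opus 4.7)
The plan is to apply Minty's theorem \ref{minty} to the functional $\Phi_{\sigma}$ constructed in \eqref{4.1.01}, then verify the order-preserving and submarkovian characterizations via Propositions \ref{CG01} and \ref{CG02}, and finally extend non-expansivity to the variable exponent spaces through an accretivity--interpolation argument.

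\textbf{Step 1 (Semigroup generation).} Lemma \ref{A01} guarantees that $\Phi_{\sigma}$ is proper, convex, and lower semicontinuous on $\mathbb{X\!}^{\,2}(\overline{\Omega})$, and a standard truncation/smoothing argument shows $D(\Phi_{\sigma})$ is dense in $\mathbb{X\!}^{\,2}(\overline{\Omega})$. Theorem \ref{minty} then ensures that $\mathcal{A}_{\sigma}=\partial\Phi_{\sigma}$ is maximal monotone and generates a nonlinear $C_0$-semigroup $\{T_{\sigma}(t)\}_{t\geq0}$ on $\mathbb{X\!}^{\,2}(\overline{\Omega})$. For $\mathbf{u}_0\in\mathbb{X\!}^{\,2}(\overline{\Omega})$, the function $\mathbf{u}(\cdot)=T_{\sigma}(\cdot)\mathbf{u}_0$ solves $\mathbf{u}'(t)+\mathcal{A}_{\sigma}\mathbf{u}(t)\ni 0$ in the strong sense, and Lemma \ref{A02} identifies this abstract equation with \eqref{4.1.00}.

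\textbf{Step 2 (Order-preserving and submarkovian).} To apply Proposition \ref{CG01}(ii), I split $\Omega$ (resp.\ $\Gamma$) into $\{u\le v\}$ and $\{u>v\}$: on the former $\tfrac{1}{2}(u+u\wedge v)=u$ and $\tfrac{1}{2}(v+u\vee v)=v$, while on the latter both expressions equal $\tfrac{u+v}{2}$. Using $\partial_{x_i}(u\wedge v)=\chi_{\{u\le v\}}\partial_{x_i}u+\chi_{\{u>v\}}\partial_{x_i}v$ (and analogously for $\vee$ and for tangential derivatives $\partial_{x_{\tau_j}}$), convexity of each integrand $s\mapsto|s|^{p_i(\cdot)}/p_i(\cdot)$ at $\lambda=\tfrac{1}{2}$ gives the desired inequality termwise in the interior sum, the boundary sum, and the zero-order potentials. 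The same splitting strategy yields condition (ii) of Proposition \ref{CG02}, now using the regions $\{u-v>\alpha\}$, $\{|u-v|\le\alpha\}$, $\{u-v<-\alpha\}$, where $u-g_\alpha(u,v)$ and $v+g_\alpha(u,v)$ coincide with $\{u,v\}$ outside the middle region and both equal $\tfrac{u+v}{2}$ inside, so convexity closes the argument. Hence $\{T_{\sigma}(t)\}$ is order-preserving and submarkovian on $\mathbb{X\!}^{\,2}(\overline{\Omega})$.

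\textbf{Step 3 (Non-expansivity and strong solutions over $\mathbb{X\!}^{\,r(\cdot)}$).} By Proposition \ref{CG02}(iii), the submarkovian property is equivalent to $L^\infty$-accretivity of $\mathcal{A}_\sigma$, giving the case $r\equiv\infty$. An $L^1$-accretivity estimate follows by testing the resolvent equation $(I+\eta\mathcal{A}_\sigma)\mathbf{u}=\mathbf{f}$ against $\operatorname{sgn}(\mathbf{u}-\mathbf{w})$ and invoking the pointwise inequalities of Proposition \ref{B1} termwise for the interior and boundary fluxes. Combining $L^1$- and $L^\infty$-non-expansivity via a truncation/interpolation argument valid in variable exponent spaces (using $\|\cdot\|_{r(\cdot)}\le\|\cdot\|_\infty^{1-\theta}\|\cdot\|_1^\theta$-type modular bounds through \eqref{L40}--\eqref{L06}) delivers non-expansivity on $\mathbb{X\!}^{\,r(\cdot)}(\overline{\Omega})$ for every $r\in\mathcal{P}(\overline{\Omega})$ with $1\le r^-\le r^+\le\infty$. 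Finally, for $\mathbf{u}_0\in\mathbb{X\!}^{\,r(\cdot)}(\overline{\Omega})$ with $r^+<\infty$, one approximates $\mathbf{u}_0$ by $\{\mathbf{u}_0^{(n)}\}\subseteq\mathbb{X\!}^{\,2}(\overline{\Omega})\cap\mathbb{X\!}^{\,r(\cdot)}(\overline{\Omega})$, applies Step~1 to each $\mathbf{u}_0^{(n)}$, and passes to the limit using the non-expansivity just established.

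\textbf{Main obstacle.} The principal technical difficulty is verifying that the lattice operations $u\wedge v$, $u\vee v$, and $u\pm g_\alpha(u,v)$ remain in $D(\Phi_\sigma)=\mathbf{V}_{\pp,\qq}\cap\mathbb{X\!}^{\,2}(\overline{\Omega})$, and that the chain rule applied simultaneously to each anisotropic partial derivative $\partial_{x_i}$ in $\Omega$ and each tangential derivative $\partial_{x_{\tau_j}}$ on $\Gamma$ is compatible with the trace identification $\mathbf{u}=(u,u|_{_{\Gamma}})$. Ensuring consistency between the interior truncation and its boundary trace requires the $\operatorname{Cap}_{_{p_m(\cdot),\Omega}}$-admissibility of $\sigma$ established in Example \ref{Example 2.4.12.}, so that pointwise identities such as $(u\wedge v)|_{_{\Gamma}}=u|_{_{\Gamma}}\wedge v|_{_{\Gamma}}$ hold $\sigma$-a.e. on $\Gamma$ even though $\pp$ and $\qq$ are unrelated vector fields.
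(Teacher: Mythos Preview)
Your strategy coincides with the paper's: Minty for generation, Propositions~\ref{CG01}--\ref{CG02} via region-splitting and convexity of the integrands, then extension to $\mathbb{X}^{r(\cdot)}$ by interpolation. Two points deserve correction or comparison.

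In Step~2 your description of $g_\alpha$ is reversed. From $g_\alpha(u,v)=\tfrac12\bigl[(u-v+\alpha)^+-(u-v-\alpha)^-\bigr]$ one computes that on the \emph{middle} region $\{|u-v|\le\alpha\}$ the pair $(v+g_\alpha,\,u-g_\alpha)$ equals $(u,v)$ (a swap, so the integrands match trivially), whereas on $\{|u-v|>\alpha\}$ both functions equal $\tfrac{u+v\pm\alpha}{2}$, whose gradients are those of $\tfrac{u+v}{2}$. The paper then handles the derivative terms on the outer regions via $|a+b|^s\le 2^{s-1}(|a|^s+|b|^s)$, but treats the zero-order potentials $\alpha|u|^{p_M-2}u$ and $\beta|u|^{q_M-2}u$ separately, writing $v+g_\alpha=\lambda_\xi u+(1-\lambda_\xi)v$ and $u-g_\alpha=\lambda_\xi v+(1-\lambda_\xi)u$ with $\lambda_\xi:=\chi_{\{u\ne v\}}g_\alpha/(u-v)\in[0,1]$ and invoking convexity of $t\mapsto|t|^s$; your ``both equal $\tfrac{u+v}{2}$'' would not suffice here since the two shifted averages differ.

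In Step~3 the paper does not prove $L^1$-accretivity directly; it simply cites the nonlinear interpolation theorems of Browder \cite{BROW69} and Maligranda \cite{MAL89} to pass from non-expansivity on $\mathbb{X}^2$ and $\mathbb{X}^\infty$ to all $\mathbb{X}^{r(\cdot)}$ with $1\le r^-\le r^+\le\infty$. Your route via a $\operatorname{sgn}$-test and Proposition~\ref{B1} is a reasonable alternative for the $L^1$ endpoint, but the ``interpolation'' you invoke for variable exponents is not a standard linear result, and in practice one ends up needing exactly those cited theorems (or an equivalent modular argument). The paper also includes an explicit verification of strong continuity on $\mathbb{X}^{r(\cdot)}$ for $r^+<\infty$ via the interpolation bound \eqref{4.1.04} followed by a density argument, which your final sentence only sketches.
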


\begin{proof}
We begin by recalling that $\pp\in C^{0,1}(\overline{\Omega})^N$ and $\qq\in C^{0,1}(\Gamma)^{N-1}$
with $1<p^-_m\leq p^{+}_M<\infty$ and $1<q^-_m\leq q^+_M<\infty$. Since $\overline{D(\Phi_{\sigma})}=\overline{D(\partial\Phi_{\sigma})}
=\mathbb{X\!}^{\,2}(\overline{\Omega})$, we see that $D(\Phi_{\sigma})$ is dense in $\mathbb{X\!}^{\,2}(\overline{\Omega})$,
and hence by Theorem \ref{minty}, the operator $\mathcal{A}_{\sigma}:=
\partial\Phi_{\sigma}$ generates a (nonlinear) $C_0$-semigroup $\left\{T_{\sigma}(t)\right\}_{t\geq0}$ on $\mathbb{X\!}^{\,2}(\overline{\Omega})$.
Hence, for each $\mathbf{u}_0\in\mathbb{X\!}^{\,2}(\overline{\Omega})$, the function $\mathbf{u}(\cdot):=
T_{\sigma}(\cdot)\mathbf{u}_0$ solves (uniquely) the abstract Cauchy problem
\begin{equation}
\label{4.1.03}\left\{
\begin{array}{lcl}
\mathbf{u}\in C(\mathbb{R\!}_{\,+};\mathbb{X\!}^{\,2}(\overline{\Omega}))\cap W^{1,\infty}_{loc}((0,\infty);
\mathbb{X\!}^{\,2}(\overline{\Omega}))\,\,\,\textrm{and}\,\,\,\mathbf{u}(t)\in\partial\Phi_{\sigma}\,\,\,\textit{a.e.},\\
\displaystyle\frac{\partial\mathbf{u}}{\partial t}+\partial\Phi_{\sigma}(\mathbf{u})\,=\,0\,\,\,\,\textit{a.e. on}\,\,\,\mathbb{R\!}_{\,+},\\
\mathbf{u}(0,x)\,=\mathbf{u}_0(x).
\end{array}
\right.
\end{equation}
To complete the proof we now show that $\left\{T_{\sigma}(t)\right\}_{t\geq0}$ is order-preserving and non-expansive.
Given $\mathbf{u},\,\mathbf{v}\in D(\Phi_{\sigma})$, then to prove the order-preserving property, define
	$$g_{_{u,v}}:=\frac{1}{2}(u+u\wedge v)\indent\,\,\,\textrm{and}\,\,\,\indent h_{_{u,v}}:=\frac{1}{2}(v+u\vee v)$$
	where $u\wedge v:=\min\{u,v\}$ and $u\vee v:=\max\{u,v\}$. One can note that
	$$	
	g_{_{u,v }}= \left\{ \begin{array}{lcc}
	u &   \mbox{ if }  & u \leq v \\
	\dfrac{u+v}{2} &  \mbox{ if }     & u > v \\
	\end{array}
	\right.
	\qquad
	\mbox{ and }
	\qquad
	h_{_{u,v}}= \left\{ \begin{array}{lcc}
	v &   \mbox{ if }  & u \leq v \\
	\dfrac{u+v}{2} &  \mbox{ if }     & u>v \\
	\end{array}
	\right.
	.
	$$	
	Since it is clear that $D(\Phi_{\sigma})$ is a lattice, we have that
$\mathbf{g}_{_{u,v}},\,\mathbf{h}_{_{u,v}}\in D(\Phi_{\sigma})$. Moreover, the following calculation holds:\\[2ex]
	$\Phi_{\sigma}(\mathbf{g}_{_{u,v}})+\Phi_{\sigma}(\mathbf{h}_{_{u,v}})$\\
	$$=\int_{\Omega}\displaystyle\sum^N_{i=1} \left( \frac{
		|\partial_{x_i} g_{u,v}|^{p_i(x)}+|\partial_{x_i} h_{u,v}|^{p_i(x)}}{p_i(x)} \right)\, dx+
	\int_{\Omega}\frac{\alpha(x)}{p_M(x)}\left(|g_{u,v}|^{^{p_M(x)}}+|h_{u,v}|^{^{p_M(x)}}\right)\,dx+\indent\,\,\,\,$$
	$$\int_{\Gamma}\displaystyle\sum^{N-1}_{j=1} \left( \frac{
		|\pxti g_{u,v}|^{q_j(x)}+|\pxti h_{u,v}|^{q_j(x)}}{q_j(x)} \right) \,d\sigma	+\int_{\Gamma}\frac{\beta(x)}{q_M(x)}\left(|g_{u,v}|^{q_M(x)}+|h_{u,v}|^{q_M(x)}\right)\,d\sigma$$
	$$=\int \limits_{\Omega \,  \cap  \, \left\{u \leq v \right\}}\displaystyle\sum^N_{i=1}
	\left( \frac{	|\partial_{x_i} u|^{p_i(x)}+|\partial_{x_i} v|^{p_i(x)}}{p_i(x)} \right) \,dx+
	\int \limits_{\Omega \, \cap \,  \left\{u \leq v \right\}} \frac{\alpha(x)}{p_M(x)}\left(|u|^{^{p_M(x)}}+|v|^{^{p_M(x)}}\right)\,dx+\indent\,\,\,\,$$
	$$\int \limits_{\Gamma \cap \left\{u \leq v \right\}}\displaystyle\sum^{N-1}_{j=1}\frac{
		\left(|\pxti u|^{q_j(x)}+|\pxti v|^{q_j(x)} \right)}{q_j(x)}\,d\sigma	+
	\int \limits_{\Gamma \cap \left\{ u \leq v \right\}}\frac{\beta(x)}{q_M(x)}\left(|u|^{q_M(x)}+|v|^{q_M(x)}\right)\,d\sigma \, +\,$$	
	$$2\int \limits_{\Omega \,\cap \, \left\{u > v \right\}}\displaystyle\sum^N_{i=1}
	\dfrac{1}{p_i(x)} \left( \dfrac{\left|\partial_{x_i} \left( {u + v} \right) \right|}{2}\right)^{p_i(x)} \,dx \, + \,
	2\int \limits_{\Omega \, \cap \, \left\{u > v \right\}} \frac{\alpha(x)}{p_M(x)}\left(\dfrac{|u+v|}{2}\right)^{p_M(x)}\,dx \,+ \,\indent\,\,\,\,$$
	$$2\int \limits_{\Gamma \,  \cap \, \left\{u > v \right\}}\displaystyle
	\sum^{N-1}_{j=1} \dfrac{1}{q_j(x)}
	\left( \dfrac{|\pxti (u + v)|}{2}   \right)^{q_j(x)}  \,  d\sigma	+
	2\int \limits_{\Gamma \, \cap \, \left\{ u > v \right\}}
	\frac{\beta(x)}{q_M(x)}\left(\dfrac{|u + v|}{2}\right)^{q_M(x)}  \,d\sigma.$$
	Using the well-known equality  \,
	\begin{equation} \label{inequality}
	|a+b|^s\leq2^{s-1} \left(|a|^s+|b|^s \right),
	\end{equation}
	 valid for all $a,\,b\in\mathbb{R\!}^N$ and for all
	$s\in[1,\infty)$, the above calculation implies that
	$$\Phi_{\sigma}(\mathbf{g}_{u,v}) + \Phi_{\sigma} (\mathbf{h}_{u,v}) \leq \Phi_{\sigma}(\mathbf{u}) + \Phi_{\sigma}(\mathbf{v}).$$
	Thus, a direct application of Proposition \ref{CG01} asserts that $\left\{  T_{\sigma}(t) \right\}_{t\geq 0}$ is order preserving.\\
	
Next, given $\xi>0$, we put
	\begin{eqnarray}
	g_{_{u,v,\xi}} :=  \left[  \dfrac{1}{2}  \left(u-v+\xi \right)_{+} -\left( u-v-\xi \right)_{-}   \right],
	\end{eqnarray}
	and observe that
	\begin{eqnarray}
	g_{_{u,v,\xi}}
	=
	\left\lbrace
	\begin{array}{lclcl}
	u-v,  &\mbox{ if }&  |u-v|\leq  \xi    \\
	& & \\
	\dfrac{u-v-\xi}{2},    &\mbox{ if }& u-v < - \xi \\
	& & \\
	\dfrac{u-v+\xi}{2}, &\mbox{if} & u-v > \xi.
	\end{array}
	\right.
	\end{eqnarray}
Again, it is clear that $\mathbf{g}_{_{u,v,\xi}}\in D(\Phi_{\sigma})$.
	Then, we first deal with the terms involving partial derivatives. In fact, we have: \\[2ex]
	$\displaystyle 	\int_{\Omega} \sum_{i=1}^{N}  \dfrac{\left| \partial_{x_i} (v+ g_{_{u,v,\xi}}) \right|^{p_i(x)} }{p_i(x)} dx$
	\begin{eqnarray*}
		&=\displaystyle
		\int \limits_{ {\Omega \cap \left\{|u-v|\leq \xi \right\} }} \sum_{i=1}^{N}  \dfrac{\left| \partial_{x_i} (v+ g_{_{u,v,\xi}}) \right|^{p_i(x)} }{p_i(x)} dx +
		\int \limits_{ {\Omega \cap  \left\{|u-v|\geq \xi \right\} } }  \sum_{i=1}^{N}  \dfrac{\left| \partial_{x_i} (v+ g_{_{u,v,\xi}}) \right|^{p_i(x)} }{p_i(x)} dx
	\end{eqnarray*}
	\begin{eqnarray*}
		&=& \displaystyle \int \limits_{ {\Omega \cap \left\{|u-v|\leq \xi \right\} }}
		\sum_{i=1}^{N}  \dfrac{\left| \partial_{x_i} u \right|^{p_i(x)} }{p_i(x)} dx+
		\int \limits_{{ \Omega \cap \left\{ u-v < - \xi \right\}}} \sum_{i=1}^{N}  \dfrac{\left| \partial_{x_i}  \left( \frac{u+v-\xi}{2} \right)  \right|^{p_i(x)} }{p_i(x)} dx \\
		&+&
		\int \limits_{ {\Omega \,  \cap \, \left\{u-v > -\xi \right\} }}
		\sum_{i=1}^{N}  \dfrac{\left| \partial_{x_i} \left( \frac{u+v+\xi}{2} \right) \right|^{p_i(x)} }{p_i(x)} dx  \\
		&=&
		\displaystyle \int \limits_{ {\Omega \, \cap \, \left\{|u-v|\leq \xi \right\} }}
		\sum_{i=1}^{N}  \dfrac{\left| \partial_{x_i} u \right|^{p_i(x)} }{p_i(x)} dx+
		\int \limits_{ {\Omega \, \cap \, \left\{ u-v< -\xi \right\} }} \sum_{i=1}^{N}  \dfrac{\left| \partial_{x_i}  \left( \frac{u+v}{2} \right)  \right|^{p_i(x)} }{p_i(x)} dx \\
		&+&
		\int \limits_{ {\Omega \, \cap \, \left\{u-v> \xi \right\} }}
		\sum_{i=1}^{N}  \dfrac{\left| \partial_{x_i} \left( \frac{u+v}{2} \right) \right|^{p_i(x)} }{p_i(x)} dx.
	\end{eqnarray*}
	Similarly,\\[2ex]
	$ \displaystyle    \int_{\Gamma} \sum_{j=1}^{N-1}  \dfrac{\left|\pxti(u- g_{_{u,v,\xi}}) \right|^{q_j(x)} }{q_j(x)} d\sigma$
	\begin{eqnarray*}
		&=&
		\displaystyle \int \limits_{ {\Gamma \, \cap \, \left\{|u-v|\leq \xi \right\} }}
		 \sum_{j=1}^{N}  \dfrac{\left| \pxti v \right|^{q_j(x)} }{q_j(x)} d\sigma+
		\int \limits_{ {\Gamma \, \cap \, \left\{u-v < - \xi \right\} }}
		\sum_{j=1}^{N}  \dfrac{\left| \pxti \left( \frac{u+v}{2} \right)  \right|^{q_j(x)} }{q_j(x)} d\sigma \\
		&+&
		\int \limits_{ {\Gamma \,  \cap \, \left\{u-v > \xi \right\} }}
		\sum_{j=1}^{N-1}  \dfrac{\left| \pxti \left( \frac{u+v}{2} \right) \right|^{q_j(x)} }{q_j(x)} d\sigma.
	\end{eqnarray*}
	Adding the two last results and applying the inequality (\ref{inequality}), we have
\begin{equation}\label{2.10}
\displaystyle\int_{\Omega} \sum_{i=1}^{N}  \dfrac{\left| \partial_{x_i} (v+ g_{_{u,v,\xi}}) \right|^{p_i(x)} }{p_i(x)} dx +  \displaystyle\int_{\Gamma} \sum_{j=1}^{N-1}  \dfrac{\left| \pxti(u- g_{_{u,v,\xi}}) \right|^{q_j(x)} }{q_j(x)} d\sigma\,\leq\,\displaystyle\int_{\Omega} \sum_{i=1}^{N}  \dfrac{\left| \partial_{x_i} u \right|^{p_i(x)} }{p_i(x)} dx +\displaystyle\int_{\Gamma} \sum_{j=1}^{N-1}  \dfrac{\left| \pxti v \right|^{q_j(x)} }{q_j(x)} d\sigma.
\end{equation}
	For the terms not involving the partial derivatives, given $\xi > 0$, we put
	\begin{eqnarray}
	\lambda_{\xi} :=  \chi_{_{\left\{ u \neq v  \right\} } } \dfrac{g_{_{u,v,\xi}} }{u-v} .
	\end{eqnarray}	
	Then $\lambda_{\xi} $ satisfies the condition $0 \leq \lambda_{\xi} \leq 1, $ with
	\begin{eqnarray}
	v+g_{_{u,v,\xi}} =	\lambda_{\xi} u + (1- \lambda_{\xi} )v \qquad
	\mbox{ and }
	u-g_{_{u,v,\xi}} = \lambda_{\xi} v + (1- \lambda_{\xi} ) u.
	\end{eqnarray}	
	Since the maps $t \longmapsto |t|^{p_M(\cdot)}$	 and $t \longmapsto |t|^{q_M(\cdot)}$ are convex, we get
	\begin{eqnarray*}
		& & \int_{\Omega} \dfrac{\alpha (x)}{ p_{_M}(x) } \left( |v+ g_{_{u,v,\xi}} |^{p_{_{M}}(x)} + |u-g_{u,v,\xi}|^{p_M(x)} \right)dx \\
		&+&
		\int_{\Gamma} \dfrac{\beta (x)}{ q_M(x) } \left( |v+ g_{_{u,v,\xi}} |^{q_{_M}(x)} + |u-g_{u,v,\xi}|^{q_{_M}(x)} \right)d\sigma \\
		&\leq&
		\lambda_{\xi} \int_{\Omega}  \dfrac{\alpha(x)}{p_{_M}(x)} \left(  |u|^{p_{_M}(x)}  +  |v|^{q_{M}(x)}  \right) dx
		+
		(1-\lambda_{\xi}) \int_{\Omega} \dfrac{\alpha(x) }{p_{_M}(x)} \left(  |u|^{p_{_M}(x)} + |v|^{p_{_M}(x)} \right) dx \\
		&+&
		\lambda_{\xi} \int_{\Omega}  \dfrac{\beta(x)}{q_{_M}(x)} \left(  |u|^{q_{_M}(x)}  +  |v|^{q_{M}(x)}  \right) d \sigma
		+
		(1-\lambda_{\xi}) \int_{\Omega} \dfrac{\beta(x) }{q_{_M}(x)} \left(  |u|^{q_{_M}(x)} + |v|^{q_{_M}(x)} \right)  d\sigma\\
		& \leq &
		\int_{\Omega} \dfrac{\alpha(x)}{p_{_M}(x)} \left(  |u|^{p_{_M}(x)} + |v|^{p_{_M}(x)} \right)dx +
		\int_{\Gamma} \dfrac{\beta(x)}{q_{_M}(x)} \left(  |u|^{q_{_M}(x)} + |v|^{q_{_M}(x)} \right)d\sigma.
	\end{eqnarray*}
	Combining (\ref{2.10}) and this last calculation yields  	
	\begin{eqnarray}
	\Phi_{\sigma} (\mathbf{u}- \mathbf{g}_{_{u,v,\xi}}) + \Phi_{\sigma}(\mathbf{v} + \mathbf{g}_{_{u,v,\xi}})
\,\leq\, 	 \Phi_{\sigma} (\mathbf{u}) + \Phi_{\sigma}(\mathbf{v}),
	\end{eqnarray}	
and thus $\left\{T_{\sigma}(t)\right\}_{t\geq0}$ is non-expansive on $\mathbb{X\!}^{\,\infty}(\overline{\Omega})$ by virtue of Proposition \ref{CG02}.
By \cite[Theorem 1]{BROW69} and \cite[Corollary 3]{MAL89} we see that $\left\{T_{\sigma}(t)\right\}_{t\geq0}$ can be extended to a non-expansive semigroup on
$\mathbb{X\!}^{\,r(\cdot)}(\overline{\Omega})$ for every $r\in\mathcal{P}(\overline{\Omega})$ with $1\leq r^-\leq r^{+}\leq\infty$.
To see the strong continuity of $\left\{T_{\sigma}(t)\right\}_{t\geq0}$ over $\mathbb{X\!}^{\,r(\cdot)}(\overline{\Omega})$
(for $r\in\mathcal{P}(\overline{\Omega})$ with $1\leq r^-\leq r^{+}<\infty$), first take
$\mathbf{u}\in\mathbb{X\!}^{\,2}(\overline{\Omega})\cap\mathbb{X\!}^{\,\infty}(\overline{\Omega})$, let $\hat{r}:=\min\{2,r^-\}$, and let
$a(x):=\frac{\hat{r}}{r(x)}\left(\frac{r^{+}-r(x)}{r^{+}-\hat{r}}\right),\,\,
b(x):=\frac{r^{+}}{r(x)}\left(\frac{r(x)-\hat{r}}{r^{+}-\hat{r}}\right)$ for all $x\in\overline{\Omega}$.
Then by \cite[Corollary 2.2]{KOV-RAK91} and H\"older's inequality (if necessary) we obtain that for $t>0$,
\begin{equation}\label{4.1.04}
|\|T_{\sigma}(t)\mathbf{u}-\mathbf{u}\||_{_{r(\cdot)}}\,\leq\,c|\|T_{\sigma}(t)\mathbf{u}-\mathbf{u}\||^{\gamma_1}_{_{2}}\,
|\|T_{\sigma}(t)\mathbf{u}-\mathbf{u}\||^{\gamma_2}_{_{\infty}}\,\,\stackrel{t\rightarrow0^+}{\longrightarrow}\,0,
\end{equation}
where
$$\gamma_1:=\left\{
\begin{array}{lcl}
a^{+},\,\indent\,\textrm{if}\,\,\,\,|\|T_{\sigma}\mathbf{u}-\mathbf{u}\||_{_{2}}>1,\\
a^{-},\,\indent\,\textrm{if}\,\,\,\,|\|T_{\sigma}\mathbf{u}-\mathbf{u}\||_{_{2}}\leq1,\\
\end{array}
\right.
\,\,\,\,\,\,\textrm{and}\,\,\,\,\,\,\gamma_2:=\left\{
\begin{array}{lcl}
b^{+},\,\indent\,\textrm{if}\,\,\,\,|\|T_{\sigma}\mathbf{u}-\mathbf{u}\||_{_{r^{+}}}>1,\\
b^{-},\,\indent\,\textrm{if}\,\,\,\,|\|T_{\sigma}\mathbf{u}-\mathbf{u}\||_{_{r^{+}}}\leq1.\\
\end{array}
\right.$$
Finally, for a general $\mathbf{u}\in\mathbb{X\!}^{\,r(\cdot)}(\overline{\Omega})$, fix $\epsilon>0$ and choose
$\mathbf{v}\in\mathbb{X\!}^{\,2}(\overline{\Omega})\cap\mathbb{X\!}^{\,\infty}(\overline{\Omega})$ such that
$|\|\mathbf{u}-\mathbf{v}\||_{_{r(\cdot)}}<\epsilon/3$. Then for a sufficiently small $t>0$, we get from (\ref{4.1.04}) that
\begin{equation}\label{4.1.05}
|\|T_{\sigma}(t)\mathbf{u}-\mathbf{u}\||_{_{r(\cdot)}}\,\leq\,|\|T_{\sigma}(t)\mathbf{u}-T_{\sigma}(t)\mathbf{v}\||_{_{r(\cdot)}}+
|\|T_{\sigma}(t)\mathbf{v}-\mathbf{v}\||_{_{r(\cdot)}}+|\|\mathbf{u}-\mathbf{v}\||_{_{r(\cdot)}}<\epsilon.
\end{equation}
Since $\epsilon>0$ was chosen arbitrary, the inequality (\ref{4.1.05}) implies that $\left\{T_{\sigma}(t)\right\}_{t\geq0}$
is a $C_0$-semigroup over $\mathbf{u}\in\mathbb{X\!}^{\,r(\cdot)}(\overline{\Omega})$, and this completes the proof.
\end{proof}

\begin{remark}
The results of this section can be established inthe same way under more general assumptions on the vector fields $\pp$ and $\qq$. In fact, for this part
it suffices to have $\pp\in \mathcal{P}^{\log}(\overline{\Omega})^N$ and $\qq\in\mathcal{P}^{\log}(\Gamma)^{N-1}$, where
$\mathcal{P}^{\log}(\overline{\Omega})$ denotes the set of functions $u\in\mathcal{P}(\overline{\Omega})$
such that the function $v:=1/u$ is globally log-H\"older continuous, that is,
if there exist constants $c_1,\,c_2>0$ and a constant $\alpha\in\mathbb{R\!}\,$ such that
$$|v(x)-v(y)|\,\leq\,\frac{c_1}{\log(e+1/|x-y|)}\indent\,\textrm{and}\indent\,|v(x)-\alpha|\,\leq\,
\frac{c_2}{\log(e+|x|)}$$ for all $x,\,y\in\overline{\Omega}$.
\end{remark}

\subsection{Ultracontractivity of semigroups}\label{subsec3.2}

\indent In this section we develop ultracotractivity properties for the nonlinear semigroup $\left\{T_{\sigma}(t)\right\}_{t\geq0}$,
under some additional conditions over $\pp$ and $\qq$ (we are assuming all the conditions in subsection \ref{subsec3.1}).
Cipriani and Grillo \cite{CIP-GR01,CIP-GR02} developed some key
tools to develop ultracontractivity properties for Dirichlet-type nonlinear semigroups. Such methods were generalized and extended
to a dynamical Robin-Wentzell type problem by Warma \cite{WAR09}, and these were extended further in \cite{VELEZ2012-3} to the case of isotropic
Robin-Wentzell and pure Wentzell differential equations with nonstandard growth conditions (for $p(\cdot)=q(\cdot)$).
In our problem, we will follow the approach in \cite{VELEZ2012-3,WAR09}; however,
in our case we are dealing with anisotropic operators involving different unrelated vector fields, one defined in the interior, and the other at the boundary.
These facts bring new difficulties, which require substantial modifications in the arguments previously applied by multiple authors.
Consequently, we provide complete proofs to all the results in this subsection.\\
\indent We begin with three technical lemmas which are needed in order to establish our desired ultracontractivity bounds.\\

\begin{lemma}\label{Lem01}
Let $\left\{T_{\sigma}(t)\right\}_{t\geq0}$ be the submarkovian $C_0$-semigroup on
$\mathbb{X\!}^{\,2}(\overline{\Omega})$ generated by $\partial\Phi_{\sigma}$. Given $t\geq0$ and $\mathbf{u}_0,\,\mathbf{v}_0\in
\mathbb{X\!}^{\,\infty}(\overline{\Omega})$, put $u(t):=T_{\sigma}(t)\mathbf{u}_0$ and $\mathbf{v}(t):=T_{\sigma}(t)\mathbf{v}_0$.
Then for every real number $r\geq2$ and for almost every $t\geq0$, there exists a
constant $C_1>0$ such that\\[2ex]
$\displaystyle\frac{d}{dt}|\|\mathbf{u}(t)-\mathbf{v}(t)\||^r_{_{r}}\,
\leq\,-C_1r\left((r-1)\displaystyle\int_{\Omega}|u(t)-v(t)|^{r-2}\displaystyle\sum^N_{i=1}|\pxi(u(t)-v(t))|^{p_i(x)}\,dx+
\displaystyle\int_{\Omega}\alpha|u(t)-v(t)|^{r+p_M(x)-2}dx\right)+$\\
\begin{equation}\label{4.2.01}
\,\,\,\,\,\,\,\,\,-C_1r\left((r-1)\displaystyle\int_{\Gamma}|u(t)-v(t)|^{r-2}\displaystyle\sum^{N-1}_{j=1}\left|\pxti(u(t)-v(t))\right|^{q_j(x)}\,d\sigma+
\displaystyle\int_{\Gamma}\beta|u(t)-v(t)|^{r+q_M(x)-2}d\sigma\right).
\end{equation}
\end{lemma}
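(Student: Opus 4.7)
The strategy is to differentiate $t\mapsto|\|\mathbf{u}(t)-\mathbf{v}(t)\||_{r}^{r}$ along the semigroup flow, test the resulting identity against $\mathbf{w}:=|\mathbf{u}-\mathbf{v}|^{r-2}(\mathbf{u}-\mathbf{v})$ in the Wentzell pairing coming from Lemma \ref{A02}, and apply the monotonicity inequality (\ref{2.3.2}) from Proposition \ref{B1} pointwise. First, because $\mathbf{u}_0,\mathbf{v}_0\in\mathbb{X\!}^{\,\infty}(\overline{\Omega})$ and $\{T_\sigma(t)\}$ is submarkovian by Theorem \ref{A03}, both $\mathbf{u}(t)$ and $\mathbf{v}(t)$ remain uniformly bounded in $\mathbb{X\!}^{\,\infty}(\overline{\Omega})$ for $t\geq 0$. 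The abstract Cauchy problem (\ref{4.1.03}) gives $\mathbf{u},\mathbf{v}\in W^{1,\infty}_{loc}((0,\infty);\mathbb{X\!}^{\,2}(\overline{\Omega}))$, so $t\mapsto|\|\mathbf{u}(t)-\mathbf{v}(t)\||_{r}^{r}$ is locally absolutely continuous and, for a.e.\ $t>0$, the chain rule yields
$$\frac{1}{r}\frac{d}{dt}|\|\mathbf{u}-\mathbf{v}\||_{r}^{r}\,=\,\int_\Omega(u_t-v_t)|u-v|^{r-2}(u-v)\,dx+\int_\Gamma(u_t-v_t)|u-v|^{r-2}(u-v)\,d\sigma.$$
The uniform $L^\infty$-bound ensures that $\mathbf{w}$ lies in $\vpqb\cap\mathbb{X\!}^{\,2}(\overline{\Omega})$ with weak derivatives $\pxi w=(r-1)|u-v|^{r-2}\pxi(u-v)$ and $\pxti w=(r-1)|u-v|^{r-2}\pxti(u-v)$, since $r\geq 2$ and the bounded factor $|u-v|^{r-2}$ preserves $L^{p_i(\cdot)}(\Omega)$- and $L^{q_j(\cdot)}(\Gamma)$-integrability.

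Second, substitute $\mathbf{u}_t=-\partial\Phi_\sigma(\mathbf{u})$ and $\mathbf{v}_t=-\partial\Phi_\sigma(\mathbf{v})$ using the strong characterization of Lemma \ref{A02}, and integrate by parts through the anisotropic Green formula. The critical structural cancellation of the Wentzell framework intervenes here: the flux $\sum_i\int_\Gamma|\pxi u|^{p_i-2}\pxi u\,\nu_i\,w\,d\sigma$ produced by integration by parts of $\int_\Omega\Delta_{\pp}u\cdot w\,dx$ is absorbed exactly by the corresponding conormal term in the boundary condition, so the pairing reduces to the symmetric form $\mathcal{E}_\Omega(\mathbf{u},\mathbf{w})+\mathcal{E}_\Gamma(\mathbf{u},\mathbf{w})$. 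Subtracting the analogous identity for $\mathbf{v}$ and inserting the weak chain-rule formulas for $\pxi w,\pxti w$ yields
\begin{align*}
\frac{1}{r}\frac{d}{dt}|\|\mathbf{u}-\mathbf{v}\||_{r}^{r}
&=-(r-1)\sum_{i=1}^N\int_\Omega|u-v|^{r-2}\bigl[|\pxi u|^{p_i-2}\pxi u-|\pxi v|^{p_i-2}\pxi v\bigr]\pxi(u-v)\,dx\\
&\quad-\int_\Omega\alpha|u-v|^{r-2}\bigl[|u|^{p_M-2}u-|v|^{p_M-2}v\bigr](u-v)\,dx\\
&\quad-(r-1)\sum_{j=1}^{N-1}\int_\Gamma|u-v|^{r-2}\bigl[|\pxti u|^{q_j-2}\pxti u-|\pxti v|^{q_j-2}\pxti v\bigr]\pxti(u-v)\,d\sigma\\
&\quad-\int_\Gamma\beta|u-v|^{r-2}\bigl[|u|^{q_M-2}u-|v|^{q_M-2}v\bigr](u-v)\,d\sigma.
\end{align*}

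Third, under the standing assumption $\min\{p_m^-,q_m^-\}\geq 2$, every exponent $p_i(x),p_M(x),q_j(x),q_M(x)$ satisfies the hypothesis of (\ref{2.3.2}) in Proposition \ref{B1}. Applying that inequality pointwise produces the lower bounds $c^*_{p_i}|\pxi(u-v)|^{p_i(x)}$ on each gradient bracket, $c^*_{p_M}|u-v|^{p_M(x)}$ on the reaction bracket (which then combines with $|u-v|^{r-2}$ to give $|u-v|^{r+p_M(x)-2}$), and their obvious boundary analogues. Setting $C_1$ to be the infimum over $x\in\overline{\Omega}$ of the resulting $c^*$-constants (uniformly positive because $p_i^+,p_M^+,q_j^+,q_M^+<\infty$) and multiplying through by $r$ gives the claimed inequality (\ref{4.2.01}).

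\textbf{Main obstacle.} The delicate point is the rigorous justification of the weak chain rule for $\mathbf{w}=|\mathbf{u}-\mathbf{v}|^{r-2}(\mathbf{u}-\mathbf{v})$ simultaneously in the interior anisotropic space and on the Riemannian boundary $\Gamma$, and the admissibility of $\mathbf{w}$ as a test function against $\partial\Phi_\sigma$. Since $r\geq 2$ but $r-2$ is generally noninteger, the Nemytski\u{\i} map $s\mapsto|s|^{r-2}s$ must be approximated by $C^1$-truncations; the uniform $L^\infty$-bound provided by the submarkovian property controls the singular factor, and dominated convergence then transfers both the identity above and the integration-by-parts step from the approximants to the limit. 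The Green-type formula underlying the flux cancellation is packaged into Lemma \ref{A02}, so that piece requires no separate argument.
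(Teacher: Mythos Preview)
Your proposal is correct and follows essentially the same route as the paper: differentiate $|\|\mathbf{u}(t)-\mathbf{v}(t)\||_r^r$, use the abstract Cauchy problem to replace the time derivatives by $-\partial\Phi_\sigma$, integrate by parts (with the Wentzell conormal cancellation), and apply inequality (\ref{2.3.2}) pointwise to obtain $C_1$. The paper's proof is terser and writes the test function as $|w|^{r-1}\textrm{sgn}(w)$ rather than $|w|^{r-2}w$, but these coincide; your version is in fact more careful about the admissibility of the test function and the uniform positivity of $C_1$ as the exponents vary over $\overline{\Omega}$. One small notational slip: the pairing you obtain is not literally $\mathcal{E}_\Omega(\mathbf{u},\mathbf{w})+\mathcal{E}_\Gamma(\mathbf{u},\mathbf{w})$ as defined in (\ref{4.1.01}), since those forms carry the extra factors $1/p_i(x)$ and $1/q_j(x)$; the displayed identity you write immediately afterward is the correct one.
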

\indent\\
\begin{proof}
Given $t\geq0$ and $\mathbf{u}_0,\,\mathbf{v}_0\in
\mathbb{X\!}^{\,\infty}(\overline{\Omega})$, we set $\mathbf{u}(t):=T_{\sigma}(t)\mathbf{u}_0$ and
$\mathbf{v}(t):=T_{\sigma}(t)v_0$. Then $\mathbf{u}(t),\,\mathbf{v}(t)\in D(\Phi_{\sigma})\cap L^{\infty}(\Omega)$.
For $r\geq2$, we define the function $G_r:[0,\infty)\rightarrow[0,\infty)$ by
$$G_r(t):=|\|\mathbf{u}(t)-\mathbf{v}(t)\||_{_{r}}.$$ Clearly $G_r$ is differentiable for almost all $t\geq0$.
Setting $\mathbf{w}(t):=\mathbf{u}(t)-\mathbf{v}(t)$, using the fact that $\mathbf{u}(t),\,\mathbf{v}(t)$
solve the Eq. (\ref{4.1.03}) (and thus also Eq. (\ref{4.1.00}))
with the respective initial data $\mathbf{u}_0$ and $\mathbf{v}_0$, taking into account (\ref{4.1.02}), making integration
by parts, and applying the inequality (\ref{2.3.2}), we obtain the following calculation.\\[2ex]
$G'_r(t)=r\displaystyle\int_{\Omega}|u(t)-v(t)|^{r-1}(u'(t)-v'(t))\textrm{sgn}(u(t)-v(t))\,dx$\\[2ex]
$=-r\displaystyle\int_{\Omega}|w(t)|^{r-1}[\partial\Phi_{\sigma}(u(t))-\partial\Phi_{\sigma}(v(t))]\textrm{sgn}(w(t))\,dx$\\[2ex]
$=r\displaystyle\int_{\Omega}|w(t)|^{r-1}\left(\Delta_{\pp}(u(t))-\alpha|u(t)|^{p_M(x)-2}u(t)\right)\textrm{sgn}(w(t))\,dx
-r\displaystyle\int_{\Omega}|w(t)|^{r-1}\left(\Delta_{\pp}(v(t))-\alpha|v(t)|^{p_M(x)-2}v(t)\right)\textrm{sgn}(w(t))\,dx+$\\
$$-r\displaystyle\int_{\Gamma}|w(t)|^{r-1}\left(\displaystyle\sum_{i=1}^N |\partial_{x_i}u(t)|^{p_i(\cdot)-2}\partial_{x_i}u\; \nu_{i}
-\Delta_{_{\qq,\Gamma}}u(t)+\beta|u(t)|^{q_M(x)-2}u(t)\right)\textrm{sgn}(w(t))\,d\sigma+$$
$$+r\displaystyle\int_{\Gamma}|w(t)|^{r-1}\left(\displaystyle\sum_{i=1}^N |\partial_{x_i}v(t)|^{p_i(\cdot)-2}\partial_{x_i}v(t)\; \nu_{i}
-\Delta_{_{\qq,\Gamma}}v(t)+\beta|v(t)|^{q_M(x)-2}v(t)\right)\textrm{sgn}(w(t))\,d\sigma$$
$=-r(r-1)\displaystyle\int_{\Omega}|w(t)|^{r-2}\displaystyle\sum_{i=1}^N\left(|\pxi u(t)|^{p_i(\cdot)-2}\pxi u-
|\pxi v(t)|^{p_i(x)-2}\pxi v(t)\right)\pxi w(t)\,dx+$\\[2ex]
\indent\indent\indent\indent\indent$-r\displaystyle\int_{\Omega}|w(t)|^{r-2}\left(\alpha|u(t)|^{p_M(x)-2}u(t)-\alpha|v(t)|^{p_M(x)-2}v(t)\right)(w(t))\,dx+$\\
$$-r(r-1)\displaystyle\int_{\Gamma}|w(t)|^{r-2}\displaystyle\sum_{j=1}^{N-1}\left(|\pxti u(t)|^{q_j(\cdot)-2}\pxti u-
|\pxti v(t)|^{q_j(x)-2}\pxti v(t)\right)\pxti w(t)\,d\sigma+$$
$$\indent\indent\indent\indent-r\displaystyle\int_{\Gamma}|w(t)|^{r-2}
\left(\beta|u(t)|^{q_M(x)-2}u(t)-\beta|v(t)|^{q_M(x)-2}v(t)\right)(w(t))\,d\sigma$$
$$\leq\,-C_1r\left((r-1)\displaystyle\int_{\Omega}|w(t)|^{r-2}\displaystyle\sum_{i=1}^N|\pxi w(t)|^{p_i(x)}\,dx+
\displaystyle\int_{\Omega}\alpha|w(t)|^{r-2}|w(t)|^{p_M(x)}\,dx\right)+\indent\,\,$$
$$-C_1r\left((r-1)\displaystyle\int_{\Gamma}|w(t)|^{r-2}\displaystyle\sum_{j=1}^{N-1}|\pxti w(t)|^{q_j(x)}\,d\sigma+
\displaystyle\int_{\Gamma}\beta|w(t)|^{r-2}|w(t)|^{q_M(x)}\,d\sigma\right),$$
for some constant $C_1>0$. Therefore, (\ref{4.2.01}) is valid, as asserted.
\end{proof}

\indent For the rest of the subsection, we will assume that $\mathbf{u}(t):=T_{\sigma}(t)\mathbf{u}_0$ and
$\mathbf{v}(t):=T_{\sigma}(t)\mathbf{v}_0$ are such that $\mathbf{u}(t)\neq\mathbf{v}(t)$ \,a.e. over $\overline{\Omega}$.\\

\begin{lemma}\label{Lem02} Assuming the notations as in Lemma \ref{Lem01}, suppose that $(p_m,q_m)\in C^{0,1}(\overline{\Omega})\times C^{0,1}(\Gamma)$,
and let $r:[0,\infty)\rightarrow[2,\infty)$ be an increasing
differentiable function. Then for almost every $t\geq0$, one has\\[2ex]
$\displaystyle\frac{d}{dt}\log\left(|\|\mathbf{u}(t)-\mathbf{v}(t)\||_{_{r(t)}}\right)
\,\leq\,\frac{r'(t)}{r(t)}\Psi_{\ast}\left(\mathbf{z}(t)^{r(t)}\mathbf{\textbf{Log}}(\mathbf{z}(t))\right)+$\\
$$+\displaystyle\frac{\mathfrak{C}^{\ast}_p(r(t)-1)\|u(t)-v(t)\|^{\theta_p(x,t)}_{_{r(t)+p_m(\cdot)-2,\Omega}}}
{[\mathfrak{p}^-_r(t)]^{p^+_M}|\|\mathbf{u}(t)-\mathbf{v}(t)\||^{r(t)}_{_{r(t)}}}\left(-\frac{1}{\epsilon_1C'_{\epsilon}}\psi_{_{\Omega}}\left(g_p(t)^{p_m(x)}
\log(g_p(t))\right)-\frac{\log(\epsilon_1)}{\epsilon_1C'_{\epsilon}}+\epsilon+\mathfrak{G}_p\right)+\indent\indent\indent$$
\begin{equation}\label{4.2.02}
+\displaystyle\frac{\mathfrak{C}^{\ast}_q(r(t)-1)\|u(t)-v(t)\|^{\theta_q(x,t)}_{_{r(t)+q_m(\cdot)-2,\Gamma}}}
{[\mathfrak{q}^-_r(t)]^{q^+_M}|\|\mathbf{u}(t)-\mathbf{v}(t)\||^{r(t)}_{_{r(t)}}}\left(-\frac{1}{\epsilon_2C''_{\epsilon'}}\psi_{_{\Gamma}}\left(g_q(t)^{q_m(x)}
\log(g_q(t))\right)-\frac{\log(\epsilon_2)}{\epsilon_2C''_{\epsilon'}}+\epsilon'+\mathfrak{G}_q\right),
\end{equation}
for every $\epsilon,\,\epsilon',\,\epsilon_1,\,\epsilon_2>0$, where $\mathfrak{C}^{\ast}_p,\,\mathfrak{C}^{\ast}_q,\,\mathfrak{G}_p,\,\mathfrak{G}_q$
denote the constant defined in (\ref{4.2.09c} and \ref{4.2.09d}, respectively, the constants
$C'_{\epsilon},\,C''_{\epsilon'}$ are the ones appearing in (\ref{2.2.25}) and (\ref{2.2.25b}), respectively,
$\theta_p(x,t),\,\theta_q(x,t)$ are given by (\ref{4.2.09b}) and (\ref{theta-q}), respectively,
$\Psi_{\ast}(\varphi_1,\varphi_2):=\psi_{_{\Omega}}(\varphi_1)+\psi_{_{\Gamma}}(\varphi_2)$
(for $\psi_{_{\Omega}}(\cdot),\,\psi_{_{\Gamma}}(\cdot)$ given by (\ref{2.2.24}) and (\ref{2.2.24b}), respectively),
$\mathbf{z}(t):=\frac{|\mathbf{u}(t)-\mathbf{v}(t)|}{|\|\mathbf{u}(t)-\mathbf{v}(t)\||_{_{r(t)}}}$,
$\mathbf{\textbf{Log}}(\mathbf{z}(t)):=(\log(z(t)),\log(z(t))|_{_{\Gamma}})$, and for
$\overset{\rightarrow} s(\cdot):=(s_1(\cdot),\ldots,s_k(\cdot))$ (where $k=N$ if $s$ is defined on $\Omega$, and $k=N-1$ if $s$ is restricted to $\Gamma$),
$\mathfrak{s}_r(x,t):=\frac{r(t)+s_m(x)-2}{s_m(x)}$, and $g_s(t):=\frac{|u(t)-v(t)|^{^{\mathfrak{s}_r(x,t)}}}
{\|u(t)-v(t)\|^{^{\mathfrak{s}_r(x,t)}}_{_{r(t)+s_m(\cdot)-2,D}}}$ over $\overline{\Omega}\times[0,\infty)$ ($D$ denoting either $\Omega$ or $\Gamma$).
\end{lemma}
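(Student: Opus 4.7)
The plan is to differentiate $\log|\|\mathbf{w}(t)\||_{r(t)}$ in time, invoke Lemma \ref{Lem01} to absorb the dissipation produced by the evolution, and then use the anisotropic logarithmic Sobolev inequality of Theorem \ref{log-sobolev} to rewrite the resulting negative anisotropic gradient terms as upper bounds involving the log-entropy functionals $\psi_{_{\Omega}}(g_p^{p_m(x)}\log g_p)$ and $\psi_{_{\Gamma}}(g_q^{q_m(x)}\log g_q)$ that appear on the right-hand side of (\ref{4.2.02}).

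\emph{Step 1: the $\Psi_{*}$ term.} Setting $A(t):=|\|\mathbf{w}(t)\||_{r(t)}$ and $F(t):=\int_{\Omega}|w|^{r(t)}\,dx+\int_{\Gamma}|w|^{r(t)}\,d\sigma$, the identity $r(t)\log A=\log F$ differentiates to
\[
\frac{d}{dt}\log A\,=\,\frac{F'}{r(t)F}-\frac{r'(t)}{r(t)}\log A.
\]
The derivative $F'$ splits as the explicit $r'(t)$-contribution $r'(t)\bigl[\int_{\Omega}|w|^{r(t)}\log|w|\,dx+\int_{\Gamma}|w|^{r(t)}\log|w|\,d\sigma\bigr]$ plus the frozen-$r$ piece, which is exactly the quantity Lemma \ref{Lem01} controls from above. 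Writing $|w|=Az$ with $z=|w|/A$ and decomposing $\log|w|=\log z+\log A$, the two $\log A$ contributions cancel with the chain-rule term $-(r'/r)\log A$ and what is left from the $r'$-part is exactly $\frac{r'(t)}{r(t)}\Psi_{*}(\mathbf{z}^{r(t)}\mathbf{Log}(\mathbf{z}))$, producing the first summand of (\ref{4.2.02}).

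\emph{Step 2: normalizing and invoking log-Sobolev.} The remaining contribution coming from Lemma \ref{Lem01}, after division by $r(t)F(t)$, consists of negative multiples of the interior dissipation $I_{p}:=(r-1)\int_{\Omega}|w|^{r-2}\sum_{i=1}^{N}|\pxi w|^{p_{i}(x)}\,dx+\int_{\Omega}\alpha|w|^{r+p_{M}(x)-2}\,dx$ and its boundary analogue $I_{q}$. To apply the logarithmic Sobolev inequality, I would introduce the normalized test function $g_p$ from the statement: the choice $\mathfrak{p}_{r}(x,t)=(r(t)+p_{m}(x)-2)/p_{m}(x)$ is made so that $p_{m}(x)\mathfrak{p}_{r}(x,t)=r(t)+p_{m}(x)-2$, which together with the Luxemburg definition of $\|w\|_{_{r+p_{m}-2,\Omega}}$ forces $\psi_{_{\Omega}}(g_p^{p_{m}(x)})=1$; the parallel argument on $\Gamma$ gives $\psi_{_{\Gamma}}(g_q^{q_{m}(x)})=1$. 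Theorem \ref{log-sobolev}(a) applied to $g_p$ and Theorem \ref{log-sobolev}(b) applied to $g_q$ then yield upper bounds on $\psi_{_{\Omega}}(g_p^{p_{m}(x)}\log g_p)$ and $\psi_{_{\Gamma}}(g_q^{q_{m}(x)}\log g_q)$ by the anisotropic gradient sums $\sum_{i}\|\pxi g_p\|_{_{p_{i}(\cdot),\Omega}}$ and $\sum_{j}\|\pxti g_q\|_{_{q_{j}(\cdot),\Gamma}}$, modulo the free parameters $\epsilon,\epsilon',\epsilon_{1},\epsilon_{2}$.

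\emph{Step 3: bridging the gradient sums.} A chain-rule computation yields $\pxi g_p=\mathfrak{p}_{r}|w|^{\mathfrak{p}_{r}-1}\sign(w)\pxi w/\|w\|_{_{r+p_{m}-2,\Omega}}^{\mathfrak{p}_{r}}$ plus subdominant terms coming from $\pxi p_{m}$. Using the modular-norm equivalences (\ref{L4})--(\ref{L5}) together with H\"older's inequality, each summand $\|\pxi g_p\|_{_{p_{i}(\cdot),\Omega}}$ can be bounded above by $I_{p}$ times a factor of the form $\mathfrak{C}_{p}^{*}\|u-v\|_{_{r+p_{m}-2,\Omega}}^{\theta_{p}(x,t)}/[\mathfrak{p}_{r}^{-}(t)]^{p_{M}^{+}}$, where $\theta_{p}(x,t)$ encodes the Luxemburg rescaling by the normalizer $\|w\|_{_{r+p_{m}-2,\Omega}}^{\mathfrak{p}_{r}}$, the factor $[\mathfrak{p}_{r}^{-}]^{p_{M}^{+}}$ absorbs the $\mathfrak{p}_{r}^{p_{i}}$ appearing after taking Luxemburg norms, and the additive constant $\mathfrak{G}_{p}$ collects the error produced by the derivatives of the variable exponent $p_{m}$. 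The analogous bookkeeping on $\Gamma$ gives the $q$-indexed expression in (\ref{4.2.02}). Inserting this estimate into Step 2 and combining with Step 1 yields (\ref{4.2.02}). The principal obstacle is exactly this last step: in the isotropic case $p_{i}\equiv p_{m}$ treated in \cite{VELEZ2012-3,WAR09} the chain rule produces $\pxi g_p\propto|w|^{(r-2)/p}\pxi w$ and the Lemma \ref{Lem01} dissipation matches the log-Sobolev gradient sum with a single explicit factor, whereas in the present setting the mismatch between $p_{i}(\cdot)$ and $p_{m}(\cdot)$ generates different powers of $|w|$ in the modular of $|\pxi g_p|^{p_{i}(\cdot)}$ versus $|w|^{r-2}|\pxi w|^{p_{i}(\cdot)}$, and closing that gap through the Luxemburg equivalences is precisely what compels the introduction of $\theta_{p}(x,t)$ together with the prefactor $\mathfrak{C}_{p}^{*}$.
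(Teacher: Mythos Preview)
Your Steps~1 and~2 match the paper: differentiate $\log|\|\mathbf w\||_{r(t)}$, feed in Lemma~\ref{Lem01} to reach (\ref{4.2.03}), then normalize via $g_p,g_q$ so that Theorem~\ref{log-sobolev} applies and yields (\ref{4.2.04b})--(\ref{4.2.04}). The architecture is correct.

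The gap is in Step~3, and it is not bookkeeping. You claim H\"older plus the modular equivalences (\ref{L4})--(\ref{L5}) suffice to bound $\sum_j\|\pxti g_q\|$ by the dissipation $I_q$ plus a harmless constant. They do not. The chain rule on $g_q=f_q^{\mathfrak q_r}$ produces, besides the main term $\mathfrak q_r\,f_q^{\mathfrak q_r-1}\pxti w/\|w\|$, a second summand $(\pxti\mathfrak q_r)\,f_q^{\mathfrak q_r}\log f_q$ coming from the $x$-dependence of $q_m$. After raising to the $q_m(x)$ power this contributes $\int_\Gamma g_q^{q_m(x)}|\log f_q|^{q_m(x)}\,d\sigma$, and on the set $\{f_q>1\}$ the logarithm pushes you into a strictly higher exponent $q_m(1+\delta)$, which is \emph{not} controlled by $\Theta_{q_m(\cdot),\Gamma}(g_q)=1$ through H\"older alone. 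The paper closes this by invoking the compact embedding $W^{1,q_m(\cdot)}(\Gamma)\hookrightarrow L^{\tilde q(\cdot)}(\Gamma)$ of Theorem~\ref{turmanoua2cont} together with an Ehrling-type interpolation (\cite[Lemma~2.4.7]{NITTKA2010}), giving $\|g_q\|_{\tilde q(\cdot),\Gamma}\le\epsilon_0\sum_j\|\pxti g_q\|_{q_m(\cdot)}+C_{\epsilon_0}$; the small gradient piece is then absorbed back into the left side, and the residual constant is exactly what becomes $\mathfrak G_q$ in (\ref{4.2.09d}). This absorption trick is precisely the device the authors flag in Remark~\ref{JDE} as the repair of the error in \cite{BOU-AVS18}. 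Your sketch says $\mathfrak G_p,\mathfrak G_q$ ``collect the error from derivatives of the variable exponent'' but supplies no mechanism for bounding that error; without the compact embedding plus absorption, Step~3 does not close.

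A secondary omission: passing from the chain-rule form $|f_q|^{(\mathfrak q_r-1)q_m}|\pxti w|^{q_m}$ to the anisotropic dissipation $|w|^{r-2}|\pxti w|^{q_j}$ requires repeated applications of Young's inequality to separate the mismatched exponents $q_m$ and $q_j$ (see the chain leading to (\ref{4.2.09})), not H\"older.
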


\begin{proof} By virtue of (\ref{4.2.01}) and the fact that $\frac{dF}{dt}=r'(t)\frac{\partial F}{\partial r}+\frac{\partial F}{\partial t}$
for a differentiable function $F=F(t,r(t))$, writing $\mathbf{w}(t):=\mathbf{u}(t)-\mathbf{v}(t)$, we infer that\\[2ex]
$\displaystyle\frac{d}{dt}|\|\mathbf{w}(t)\||^{r(t)}_{_{r(t)}}\,\leq\,r'(t)\Psi_{\ast}\left(\mathbf{w}(t)^{r(t)}
\mathbf{\textbf{Log}}(\mathbf{w}(t))\right)+$\\
$$-C_1r(t)\left((r(t)-1)\displaystyle\int_{\Omega}|w(t)|^{r(t)-2}\displaystyle\sum_{i=1}^N|\pxi w(t)|^{p_i(x)}\,dx+
\displaystyle\int_{\Omega}\alpha|w(t)|^{r(t)+p_M(x)-2}\,dx\right)+\indent\,\,$$
$$-C_1r(t)\left((r(t)-1)\displaystyle\int_{\Gamma}|w(t)|^{r(t)-2}\displaystyle\sum_{j=1}^{N-1}|\pxti w(t)|^{q_j(x)}\,d\sigma+
\displaystyle\int_{\Gamma}\beta|w(t)|^{r(t)+q_M(x)-2}\,d\sigma\right).$$
As
$$\displaystyle\frac{d}{dt}\log\left(|\|\mathbf{w}(t)\||_{_{r(t)}}\right)=
-\frac{r'(t)}{r(t)}\log\left(|\|\mathbf{w}(t)\||_{_{r(t)}}\right)+\frac{1}{r(t)|\|\mathbf{w}(t)\||^{r(t)}_{_{r(t)}}}
\displaystyle\frac{d}{dt}|\|\mathbf{w}(t)\||^{r(t)}_{_{r(t)}},$$
using the fact that $\Psi_{\ast}\left(\mathbf{z}(t)^{r(t)}\right)=1$, it follows that\\[2ex]
\noindent$\displaystyle\frac{d}{dt}\log\left(|\|\mathbf{w}(t)\||_{_{r(t)}}\right)\,
\leq\,-\frac{r'(t)}{r(t)}\log\left(|\|\mathbf{w}(t)\||_{_{r(t)}}\right)+\displaystyle\frac{\partial}{\partial t}|\|\mathbf{w}(t)\||^{r(t)}_{_{r(t)}}+
\displaystyle\frac{r'(t)}{r(t)|\|\mathbf{w}(t)\||^{r(t)}_{_{r(t)}}}\Psi_{\ast}\left(\mathbf{w}(t)^{r(t)}\mathbf{\textbf{Log}}(\mathbf{w}(t))\right)$\\[2ex]
\indent$\leq\,\displaystyle\frac{r'(t)}{r(t)}\Psi_{\ast}\left(\mathbf{z}(t)^{r(t)}\mathbf{\textbf{Log}}(\mathbf{z}(t))\right)+$\\
$$-\frac{C_1}{|\|\mathbf{w}(t)\||^{r(t)}_{_{r(t)}}}\left((r(t)-1)\displaystyle\int_{\Omega}|w(t)|^{r(t)-2}
\displaystyle\sum_{i=1}^N|\pxi w(t)|^{p_i(x)}\,dx+
\displaystyle\int_{\Omega}\alpha|w(t)|^{r(t)+p_M(x)-2}\,dx\right)+\indent\,\,$$
$$-\frac{C_1}{|\|\mathbf{w}(t)\||^{r(t)}_{_{r(t)}}}\left((r(t)-1)\displaystyle\int_{\Gamma}|w(t)|^{r(t)-2}
\displaystyle\sum_{j=1}^{N-1}|\pxti w(t)|^{q_j(x)}\,d\sigma+
\displaystyle\int_{\Gamma}\beta|w(t)|^{r(t)+q_M(x)-2}\,d\sigma\right)$$\indent\\
\indent$\leq\,\displaystyle\frac{r'(t)}{r(t)}\Psi_{\ast}\left(\mathbf{z}(t)^{r(t)}\mathbf{\textbf{Log}}(\mathbf{z}(t))\right)+$\\
\begin{equation}\label{4.2.03}
-\frac{C_1(r(t)-1)}{|\|\mathbf{w}(t)\||^{r(t)}_{_{r(t)}}}\left(\displaystyle\int_{\Omega}|w(t)|^{r(t)-2}
\displaystyle\sum_{i=1}^N|\pxi w(t)|^{p_i(x)}\,dx+\displaystyle\int_{\Gamma}|w(t)|^{r(t)-2}
\displaystyle\sum_{j=1}^{N-1}|\pxti w(t)|^{q_j(x)}\,d\sigma\right),
\end{equation}\indent\\
where we recall that $\mathbf{z}(t):=\frac{|\mathbf{w}(t)|}{|\|\mathbf{w}(t)\|_{_{r(t)}}}$. To complete the proof, we examine the terms in (\ref{4.2.03})
containing derivatives. We begin by considering the boundary term. Indeed, given $\mathfrak{q}_r(x,t):=\frac{r(t)+q_m(x)-2}{q_m(x)}$, put
$$f_q(t):=\frac{|w(t)|}{\|w(t)\|_{_{q_m(\cdot)\mathfrak{q}_r(\cdot,t),\Gamma}}}
\,\indent\,\textrm{and}\,\indent\,g_q(t):=f_q(t)^{^{\mathfrak{q}_r(\cdot,t)}}.$$
Then $g_q(t)=f_q(t)^{^{\mathfrak{q}_r(\cdot,t)}}=\frac{|u(t)-v(t)|^{^{\mathfrak{q}_r(\cdot,t)}}}
{\|u(t)-v(t)\|^{^{\mathfrak{q}_r(\cdot,t)}}_{_{q_m(\cdot)\mathfrak{q}_r(\cdot,t),\Gamma}}}$ lies in
$W^{1,\qq}(\Gamma)^+$ and fulfills (\ref{2.2.24b}). Then it follows from (\ref{2.2.25b}) that there exists a constant $M_{q}>0$ such that\\
\begin{equation}\label{4.2.04b}
-\displaystyle\sum^{N-1}_{j=1}\|\pxti g_q(t)\|_{_{q_m(\cdot),\Gamma}}\,\leq\,M_{q}\left(-\frac{1}{\epsilon_2C''_{\epsilon'}}\psi_{_{\Gamma}}\left(g_q(t)^{q_m(x)}
\log(g_q(t))\right)-\frac{\log(\epsilon_2)}{\epsilon_2C''_{\epsilon'}}+\epsilon'\right).
\end{equation}\indent\\
for every $\epsilon',\,\epsilon_2>0$. In the same way, for $\mathfrak{p}_r(x,t):=\frac{r(t)+p_m(x)-2}{p_m(x)}$, set
$$f_p(t):=\frac{|w(t)|}{\|w(t)\|_{_{p_m(\cdot)\mathfrak{p}_r(\cdot,t),\Omega}}}
\,\indent\,\textrm{and}\,\indent\,g_p(t):=f_p(t)^{^{\mathfrak{p}_r(\cdot,t)}}.$$
Then $g_p(t)\in W^{1,\pp}(\Omega)^+$ and fulfills (\ref{2.2.24}), and thus from (\ref{2.2.25}), there exists a constant $M_{p}>0$ such that\\
\begin{equation}\label{4.2.04}
-\displaystyle\sum^{N}_{i=1}\|\pxi g_p(t)\|_{_{p_m(\cdot),\Omega}}\,\leq\,M_{p}\left(-\frac{1}{\epsilon_1C'_{\epsilon}}\psi_{_{\Omega}}\left(g_p(t)^{p_m(x)}
\log(g_p(t))\right)-\frac{\log(\epsilon_1)}{\epsilon_1C'_{\epsilon}}+\epsilon\right).
\end{equation}\indent\\
for every $\epsilon,\,\epsilon_1>0$. For the remaining of the calculations, we will spell the details of the boundary terms; the interior estimates
run in the same way. In fact, since $1=\Theta_{_{q_m(\cdot),\Gamma}}(g_q(t))=\|g_q(t)\|_{_{q_m(\cdot),\Gamma}}$, one has the following calculation:\\[2ex]
\indent$\displaystyle\sum^{N-1}_{j=1}\Theta_{_{q_m(\cdot),\Gamma}}(\pxti g_q(t))+(N-1)\,\geq\,\displaystyle\sum^{N-1}_{j=1}\left(
\|\pxti g_q(t)\|^{\bar{q}_j}_{_{q_m(\cdot),\Gamma}}+1\right)\,\geq\,\displaystyle\sum^{N-1}_{j=1}\frac{1}{2^{\bar{q}_j}}\left(
\|\pxti g_q(t)\|_{_{q_m(\cdot),\Gamma}}+1\right)^{\bar{q}_j}$\\
\begin{equation}\label{4.2.06}
\indent\indent\indent\indent\indent\indent\indent\indent\indent\indent\indent\indent\geq\,\frac{1}{2^{\bar{q}_M}}
\left(\displaystyle\sum^{N-1}_{j=1}\|\pxti g_q(t)\|_{_{q_m(\cdot),\Gamma}}+(N-1)\right),
\end{equation}
where for each $j\in\{1,\ldots,N-1\}$, $$\bar{q}_j:=\left\{
\begin{array}{lcl}
q^-_j,\,\,\,\,\,\,\,\,\,\textrm{if}\,\,\,\,\|\pxti g_q(t)\|_{_{q_m(\cdot),\Gamma}}>1,\\
q^{+}_j,\,\,\,\,\,\,\,\,\,\textrm{if}\,\,\,\,\|\pxti g_q(t)\|_{_{q_m(\cdot),\Gamma}}\leq1,\\
\end{array}
\right.$$
Using (\ref{4.2.06}) together with Young's inequality (several times), we derive the following estimate:\\[2ex]
$\displaystyle\sum^{N-1}_{j=1}\|\pxti g_q(t)\|_{_{q_m(\cdot),\Gamma}}\,\leq\,2^{\bar{q}_M}\displaystyle\sum^{N-1}_{j=1}
\Theta_{_{q_m(\cdot),\Gamma}}(\pxti g_q(t))+2^{\bar{q}_M}(N-1)\Theta_{_{q_m(\cdot),\Gamma}}(g_q(t))$\\
$$=2^{\bar{q}_M}\displaystyle\sum^{N-1}_{j=1}\displaystyle\int_{\Gamma}\left|\mathfrak{q}_r(x,t)f_q(t)^{^{\mathfrak{q}_r(x,t)-1}}\frac{|\pxti w(t)|}
{\|w(t)\|_{_{r(t)+q_m(\cdot)-2,\Gamma}}}+\pxti\mathfrak{q}_r(x,t)f_q(t)^{^{\mathfrak{q}_r(x,t)}}
\log(f_q(t))\right|^{q_m(x)}d\sigma+2^{\bar{q}_M}(N-1)\Theta_{_{q_m(\cdot),\Gamma}}(g_q(t))$$
$$\leq\,2^{\bar{q}_M}\displaystyle\sum^{N-1}_{j=1}\displaystyle\int_{\Gamma}\left|\mathfrak{q}_r(x,t)|f_q(t)|^{^{\frac{r(t)-2}{q_j(x)}}}
|f_q(t)|^{^{(r(t)-2)\frac{q_j(x)-q_m(x)}{q_m(x)q_j(x)}}}\frac{|\pxti w(t)|}
{\|w(t)\|_{_{r(t)+q_m(\cdot)-2,\Gamma}}}+|\pxti\mathfrak{q}_r(x,t)f_q(t)^{^{\mathfrak{q}_r(x,t)}}\log(f_q(t))|\right|^{q_m(x)}d\sigma+$$
$$\indent\indent\indent\indent\indent\indent\indent\indent\indent\indent\indent\indent
\indent\indent\indent\indent\indent\indent\indent\indent\indent\indent\indent\indent+2^{\bar{q}_M}(N-1)\Theta_{_{q_m(\cdot),\Gamma}}(g_q(t))$$
$$\leq\,2^{\bar{q}_M+q^+_m}\displaystyle\sum^{N-1}_{j=1}\displaystyle\int_{\Gamma}\left||f_q(t)|^{^{\frac{r(t)-2}{q_m(x)}}}+\mathfrak{q}^{^{\frac{q_j(x)}
{q_m(x)}}}_r(x,t)|f_q(t)|^{^{\frac{r(t)-2}{q_m(x)}}}\left(\frac{|\pxti w(t)|}
{\|w(t)\|_{_{r(t)+q_m(\cdot)-2,\Gamma}}}\right)^{^{\frac{q_j(x)}{q_m(x)}}}\right|^{q_m(x)}d\sigma+$$
$$\indent\indent\indent\indent\indent\indent\indent\indent+2^{\bar{q}_M+q^+_m}\displaystyle\sum^{N-1}_{j=1}\displaystyle
\int_{\Gamma}\left||\pxti\mathfrak{q}_r(x,t)f_q(t)^{^{\mathfrak{q}_r(x,t)}}\log(f_q(t))|\right|^{q_m(x)}d\sigma
+2^{\bar{q}_M}(N-1)\Theta_{_{q_m(\cdot),\Gamma}}(g_q(t))$$
$$\leq\,2^{\bar{q}_M+2q^+_m}\left(\frac{[\mathfrak{q}^-_r(t)]^{q^+_M}|\|\mathbf{w}(t)\||^{r(t)}_{_{r(t)}}}{
\|w(t)\|^{\theta_q(x,t)}_{_{r(t)+q_m(\cdot)-2,\Gamma}}}\displaystyle\sum^{N-1}_{j=1}\displaystyle\int_{\Gamma}\frac{|w(t)|^{r(t)-2}}
{|\|\mathbf{w}(t)\||^{r(t)}_{_{r(t)}}}|\pxti w(t)|^{q_j(x)}\,d\sigma+\displaystyle\int_{\Gamma}
|f_q(t)|^{^{r(t)-2}}\,d\sigma\right)+$$
$$\indent\indent\indent\indent\indent\indent\indent\indent\indent\indent\indent\indent
+L_q\displaystyle\int_{\Gamma}|f_q(t)|^{^{r(t)+q_m(x)-2}}|\log(f_q(t))|^{q_m(x)}\,d\sigma+2^{\bar{q}_M}(N-1)\Theta_{_{q_m(\cdot),\Gamma}}(g_q(t))$$
$$\leq\,2^{\bar{q}_M+2q^+_m}\left(\frac{[\mathfrak{q}^-_r(t)]^{q^+_M}|\|\mathbf{w}(t)\||^{r(t)}_{_{r(t)}}}{
\|w(t)\|^{\theta_q(x,t)}_{_{r(t)+q_m(\cdot)-2,\Gamma}}}\displaystyle\sum^{N-1}_{j=1}\displaystyle\int_{\Gamma}\frac{|w(t)|^{r(t)-2}}
{|\|\mathbf{w}(t)\||^{r(t)}_{_{r(t)}}}|\pxti w(t)|^{q_j(x)}\,d\sigma+L_q(N-1)\displaystyle\int_{\Gamma}
|f_q(t)|^{^{r(t)+q_m(x)-2}}|\log(f_q(t))|^{q_m(x)}\,d\sigma\right)+$$
$$\indent\indent\indent\indent\indent\indent\indent\indent\indent\indent\indent\indent
\indent\indent\indent\indent\indent\indent\indent\indent\indent\indent+2^{\bar{q}_M+2q^+_m+2}(N-1)\Theta_{_{q_m(\cdot),\Gamma}}(g_q(t)),$$
where $L_q>0$ denotes the Lipschitz constant of $q_m(\cdot)$, \,$\mathfrak{q}^-_r(t):=(r(t)+q^-_m-2)/q^-_m$, and
\begin{equation}\label{theta-q}
\theta_q(x,t):=\left\{
\begin{array}{lcl}
r(t)+q^-_{m}-2,\,\,\,\,\,\,\,\,\,\,\,\,\textrm{if}\,\,\,\,\|w(t)\|_{_{r(t)+q_m(\cdot)-2,\Gamma}}>1,\\[1ex]
r(t)+q^+_{m}-2,\,\,\,\,\,\,\,\,\,\,\,\,\textrm{if}\,\,\,\,\|w(t)\|_{_{r(t)+q_m(\cdot)-2,\Gamma}}\leq1.\\
\end{array}
\right.
\end{equation}
Now we consider the term involving a logarithm in the above calculation. First, recall that for each $\delta_1,\,\delta_2\in(0,1)$, one has
the standard logarithmic properties:
$$\displaystyle\lim_{t\rightarrow0^+}t^{\delta_1}|\log(t)|=\displaystyle\lim_{t\rightarrow\infty}t^{-\delta_2}|\log(t)|=0.$$
Then, as $f_q(t)\geq0$, letting $\Gamma_1:=\Omega\cap\{f_q(t)\leq1\}$ and $\Gamma_2:=\Gamma\setminus\Gamma_1$, we put\\
$$J_m(f_q(t)):=\displaystyle\int_{\Gamma_m}\left(f_q(t)^{^{\mathfrak{q}_r(\cdot,t)}}|\log(f_q(t))|\right)^{q_m(x)}\,d\sigma,
\indent\indent\textrm{for}\,\,\,m\in\{1,2\}.$$
Then by the above logarithmic properties together with H\"older's inequality, we have that
\begin{equation}\label{4.2.07}
J_1(f_q(t))\,\leq\,c(\delta_1)\displaystyle\int_{\Gamma}g_q(t)^{^{q_m(x)(1-\delta_1)}}\,d\sigma\,\leq\,C_{_{\Gamma}}\|g_q(t)\|_{_{q_m(\cdot),\Gamma}}
=C_{_{\Gamma}}\Theta_{_{q_m(\cdot),\Gamma}}(g_q(t)),
\end{equation}
for some constants $c(\delta_1),\,C_{_{\Gamma}}>0$. On the other hand, since
$\Theta_{_{q_m(\cdot),\Gamma_2}}(g_q(t))<1$, we can find a constant $\eta>0$ large enough, and a sufficiently
small constant $\delta_2\in(0,1)$, such that $\left\|g_q(t)/\eta\right\|_{_{q_m(\cdot)(1+\delta_2),\Gamma_2}}\leq1$. Then we have\\[2ex]
$J_2(f_q(t))\,\leq\,c(\delta_2)\displaystyle\int_{\Gamma_2}g_q(t)^{^{q_m(x)(1+\delta_2)}}\,d\sigma$
\begin{equation}\label{4.2.08}
\indent\leq\,c(\delta_2)\eta^{^{q^{+}_m(1+\delta_2)}}\displaystyle\int_{\Gamma_2}\left(\frac{g_q(t)}{\eta}\right)^{^{q_m(x)(1+\delta_2)}}\,d\sigma
\,\leq\,C'_{_{\Gamma}}\eta^{^{q^{+}_m(1+\delta_2)-1}}\|g_q(t)\|_{_{q_m(\cdot)(1+\delta_2),\Gamma}}
\,\leq\,C''_{_{\Gamma}}\eta^{^{q^{+}_m(1+\delta_2)-1}}\|g_q(t)\|_{_{\tilde{q}(\cdot),\Gamma}},
\end{equation}
for some constants $c(\delta_2),\,C'_{_{\Gamma}},\,C''_{_{\Gamma}}>0$, where $\tilde{q}\in\mathcal{P}(\Gamma)$ is such that
$q_m(x)(1+\delta_2)\leq\tilde{q}(x)$ for a.e. $x\in\Gamma$ and $\textrm{ess}\displaystyle\inf_{x\in\Gamma}\left\{q^\partial(x)-\tilde{q}(x)\right\}>0$.
By Theorem \ref{turmanoua2cont}, the embedding $W^{1,q_m(\cdot)}(\Gamma) \hookrightarrow L^{\tilde{q}(\cdot)}({\Gamma})$ is compact,
and consequently, an application of \cite[Lemma 2.4.7]{NITTKA2010} implies that $$\|g_q(t)\|_{_{\tilde{q}(\cdot),\Gamma}}
\,\leq\,\epsilon_0\displaystyle\sum^{N-1}_{j=1}\|\pxti g_q(t)\|_{_{q_m(\cdot),\Gamma}}+C_{\epsilon_0}\|g_q(t)\|_{_{q_m(\cdot),\Gamma}},$$
for all $\epsilon_0>0$, and for some constant $C_{\epsilon_0}>0$. Using this, and selecting $\epsilon_0>0$ suitably, (\ref{4.2.08}) becomes
\begin{equation}\label{4.2.08a}
J_2(f_q(t))\,\leq\,M_{_{\Gamma}}(\eta,\epsilon'_0)\Theta_{_{q_m(\cdot),\Gamma}}(g_q(t))+\epsilon'_0\displaystyle\sum^{N-1}_{j=1}\|\pxti g_q(t)\|_{_{q_m(\cdot),\Gamma}},
\end{equation}
for each $\epsilon'_0>0$, for some constant $M_{_{\Gamma}}(\eta,\epsilon'_0)>0$, where we have been using the fact that
$\|g_q(t)\|_{_{q_m(\cdot),\Gamma}}=\Theta_{_{q_m(\cdot),\Gamma}}(g_q(t))=1$.
Choosing $\epsilon_1>0$ small enough,
we apply the inequalities (\ref{4.2.07}) and (\ref{4.2.08a}) into the previous long estimate to deduce that\\[2ex]
$\displaystyle\sum^{N-1}_{j=1}\|\pxti g_q(t)\|_{_{q_m(\cdot),\Gamma}}\,\leq\,C_{\epsilon'_0}\left(\frac{2^{\bar{q}_M+2q^+_m}
[\mathfrak{q}^-_r(t)]^{q^+_M}|\|\mathbf{w}(t)\||^{r(t)}_{_{r(t)}}}{
\|w(t)\|^{\theta_q(x,t)}_{_{r(t)+q_m(\cdot)-2,\Gamma}}}\displaystyle\sum^{N-1}_{j=1}\displaystyle\int_{\Gamma}\frac{|w(t)|^{r(t)-2}}
{|\|\mathbf{w}(t)\||^{r(t)}_{_{r(t)}}}|\pxti w(t)|^{q_j(x)}\,d\sigma\right)+$\\
\begin{equation}\label{4.2.09}
\indent\indent\indent\indent\indent\indent\indent\indent\indent\indent\indent\indent
\indent\indent\indent\indent+2^{\bar{q}_M+2q^+_m+2}(N-1)C_{\epsilon'_0}\left\{1+L_q[1+M_{_{\Gamma}}(\eta,\epsilon_1)]\right\},
\end{equation}
for some constant $C_{\epsilon'_0}>0$. Then, proceeding in the same way,
one can obtain the existence of positive constants $C^{\ast}$, $\eta'$, $M_{_{\Omega}}(\eta')$, $L_p$, fulfilling the inequality\\[2ex]
$\displaystyle\sum^{N}_{i=1}\|\pxi g_p(t)\|_{_{p_m(\cdot),\Omega}}\,\leq\,C^{\ast}[\mathfrak{p}^-_r(t)]^{p^+_M}\left(\frac{2^{\bar{p}_M+2p^+_m}
|\|\mathbf{w}(t)\||^{r(t)}_{_{r(t)}}}{\|w(t)\|^{\theta_p(x,t)}_{_{r(t)+p_m(\cdot)-2,\Omega}}}
\displaystyle\sum^{N}_{i=1}\displaystyle\int_{\Omega}\frac{|w(t)|^{r(t)-2}}{|\|\mathbf{w}(t)\||^{r(t)}_{_{r(t)}}}|\pxi w(t)|^{p_i(x)}\,dx\right)+$\\
\begin{equation}\label{4.2.09a}
\indent\indent\indent\indent\indent\indent\indent\indent\indent\indent\indent\indent
\indent\indent\indent\indent+2^{\bar{p}_M+2p^+_m+2}N\left\{1+L_p[1+M_{_{\Omega}}(\eta')]\right\},
\end{equation}
where
\begin{equation}\label{4.2.09b}
\bar{p}_i:=\left\{
\begin{array}{lcl}
p^-_i,\,\,\,\,\,\,\,\,\,\textrm{if}\,\,\,\,\|\pxi g_p(t)\|_{_{p_m(\cdot),\Omega}}>1,\\
p^{+}_i,\,\,\,\,\,\,\,\,\,\textrm{if}\,\,\,\,\|\pxi g_p(t)\|_{_{p_m(\cdot),\Omega}}\leq1,\\
\end{array}
\right.\,\,\,\,\,\textrm{and}\,\,\,\,\,
\theta_p(x,t):=\left\{
\begin{array}{lcl}
r(t)+p^-_{m}-2,\,\,\,\,\,\,\,\,\,\,\,\,\textrm{if}\,\,\,\,\|w(t)\|_{_{r(t)+p_m(\cdot)-2,\Omega}}>1,\\[1ex]
r(t)+p^+_{m}-2,\,\,\,\,\,\,\,\,\,\,\,\,\textrm{if}\,\,\,\,\|w(t)\|_{_{r(t)+p_m(\cdot)-2,\Omega}}\leq1,\\
\end{array}
\right.
\end{equation}
($1\leq i\leq N$). Setting
\begin{equation}\label{4.2.09c}
\mathfrak{C}^{\ast}_p:=\frac{C_1M_p}{2^{^{\bar{p}_M+2p^+_m}}C^{\ast}},\,\,\,\,\,\,\,\,\,\,\,\,\,\,\,\,\,\,\mathfrak{G}_{p}:=
\displaystyle\frac{2^{\bar{p}_M+2p^+_m+2}N\left\{1+L_p[1+M_{_{\Omega}}(\eta')]\right\}}{C_1M_p},
\end{equation}
and
\begin{equation}\label{4.2.09d}
\mathfrak{C}^{\ast}_q:=\frac{C_1M_q}{2^{^{\bar{q}_M+2q^+_m}}C_{\epsilon'_0}},\,\,\,\,\,\,\,\,\,\,\,\,\,\,\,\,\mathfrak{G}_{q}:=
\displaystyle\frac{2^{\bar{q}_M+2q^+_m+2}(N-1)\left\{1+L_q[1+M_{_{\Gamma}}(\eta,\epsilon_1)]\right\}}{C_1M_q},
\end{equation}\indent\\
(for $C_1>0$ the constant in Lemma \ref{Lem01}), we combine (\ref{4.2.03}), (\ref{4.2.04b}), (\ref{4.2.04}), (\ref{4.2.09}), and (\ref{4.2.09a}),
and we arrive at the desired inequality (\ref{4.2.02}), completing the proof.
\end{proof}

\begin{lemma}\label{Lem03}
Assume the same conditions, notations, and assumptions of Lemma \ref{Lem02}, and assume in addition that $q_m\in C^{0,1}(\overline{\Omega})$
with $2<q^-_m\leq q^+_M<\infty$. Then for almost all $t\geq0$ we have
\begin{equation}\label{4.2.10}
\displaystyle\frac{d}{dt}\log\left(|\|\mathbf{u}(t)-\mathbf{v}(t)\||_{_{r(t)}}\right)\,\leq\,-\mathfrak{P}_{p,q}(t)
\log\left(|\|\mathbf{u}(t)-\mathbf{v}(t)\||_{_{r(t)}}\right)-\mathfrak{Q}_{p,q}(t),
\end{equation}
where
\begin{equation}\label{P}
\mathfrak{P}_{p,q}(t):=\frac{r'(t)}{r(t)}\left(\frac{(\check{p}_m-2)\tilde{p}_m}{r(t)+\tilde{p}-2}+\frac{(\check{q}_m-2)\tilde{q}_m}{r(t)+\tilde{q}-2}
\right)
\end{equation}
and\\[2ex]
\indent$\mathfrak{Q}_{p,q}(t):=\displaystyle\frac{r'(t)\tilde{p}_m}{r(t)(r(t)+\tilde{p}_m-2)}
\log\left(\displaystyle\frac{\mathfrak{C}^{\ast}_pr(t)(r(t)-1)\tilde{p}^{^{p^+_M-1}}_m}{r'(t)
C'_{\epsilon}(r(t)+\tilde{p}_m-2)^{^{p^+_M-1}}}\right)-\frac{2\mathfrak{C}^{\ast}_p(r(t)-1)\kappa_p\tilde{p}^{^{p^+_M}}_m}{(r(t)+\tilde{p}_m-2)^{^{p^+_M}}}+$\\
\begin{equation}\label{Q}
\indent\indent\indent\indent+\frac{r'(t)\tilde{q}_m}{r(t)(r(t)+\tilde{q}_m-2)}\log\left(\displaystyle
\frac{\mathfrak{C}^{\ast}_qr(t)(r(t)-1)\tilde{q}^{^{q^+_M-1}}_m}{r'(t)
C''_{\epsilon'}(r(t)+\tilde{q}_m-2)^{^{q^+_M-1}}}\right)-\frac{2\mathfrak{C}^{\ast}_q(r(t)-1)\kappa_q\tilde{q}^{^{q^+_M}}_m}{(r(t)+\tilde{q}_m-2)^{^{q^+_M}}}
\end{equation}
(for the constants $\check{p}_m,\,\check{q}_m$ given by (\ref{check-p-q}), and $\kappa_p,\,\kappa_q$ describe in the proof of the lemma below).
\end{lemma}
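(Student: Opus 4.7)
The plan is to start from the master inequality \eqref{4.2.02} of Lemma \ref{Lem02} and specialize the free parameters $\epsilon_1,\epsilon_2$ so that the free $-\log(\epsilon_i)/(\epsilon_i \cdot C)$ term reproduces the leading logarithm appearing in $\mathfrak{Q}_{p,q}(t)$, while simultaneously comparing the cross-term $\Psi_{\ast}\!\left(\mathbf{z}(t)^{r(t)}\mathbf{Log}(\mathbf{z}(t))\right)$ with the negative modular terms $-\psi_{_{\Omega}}(g_p^{p_m}\log g_p)$ and $-\psi_{_{\Gamma}}(g_q^{q_m}\log g_q)$. Once those two pieces are matched, the remaining positive contribution will be proportional to $\log(|\|\mathbf{w}(t)\||_{_{r(t)}})$ with coefficient exactly $-\mathfrak{P}_{p,q}(t)$, giving the desired linear differential inequality.

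Concretely, I would first choose
$$\epsilon_1(t)\,\propto\,\frac{r'(t)\,C'_{\epsilon}\,(r(t)+\tilde{p}_m-2)^{p^+_M-1}}{\mathfrak{C}^{\ast}_p\,r(t)(r(t)-1)\,\tilde{p}_m^{p^+_M-1}},\qquad \epsilon_2(t)\,\propto\,\frac{r'(t)\,C''_{\epsilon'}\,(r(t)+\tilde{q}_m-2)^{q^+_M-1}}{\mathfrak{C}^{\ast}_q\,r(t)(r(t)-1)\,\tilde{q}_m^{q^+_M-1}},$$
so that the factor $-\log(\epsilon_i)$ in \eqref{4.2.02} yields precisely the two logarithmic terms inside $\mathfrak{Q}_{p,q}(t)$. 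The residual additive constants (coming from $\epsilon,\epsilon',\mathfrak{G}_p,\mathfrak{G}_q$ multiplied by $\epsilon_i$) are then lumped into the negative numerical constants $\kappa_p,\kappa_q$; this is where the hypothesis $q^-_m>2$ (combined with $p^-_m\geq 2$ from subsection \ref{subsec3.1}) is needed to keep the coefficients of $\tilde{p}_m^{\,p^+_M}$ and $\tilde{q}_m^{\,q^+_M}$ positive.

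Next, I would handle the term $\frac{r'(t)}{r(t)}\Psi_{\ast}(\mathbf{z}^{r(t)}\mathbf{Log}(\mathbf{z}))$ by rewriting $z^{r(t)}\log z$ in terms of the normalized test functions $g_p,g_q$. Since
$$g_p^{p_m(x)}\log g_p\,=\,\mathfrak{p}_r(x,t)\,\frac{|w(t)|^{r(t)+p_m(x)-2}}{\|w(t)\|^{r(t)+p_m(x)-2}_{_{r(t)+p_m(\cdot)-2,\Omega}}}\,\log\!\left(\frac{|w(t)|}{\|w(t)\|_{_{r(t)+p_m(\cdot)-2,\Omega}}}\right),$$
comparing $\|w\|_{_{r(t)}}$ against $\|w\|_{_{r(t)+p_m(\cdot)-2,\Omega}}$ via H\"older (the embedding $L^{r+p_m-2}\hookrightarrow L^{r(t)}$ is valid because $p_m^-\geq 2$) produces a factor that upon rearrangement extracts $\log(|\|\mathbf{w}(t)\||_{_{r(t)}})$ with coefficient $\frac{r'(t)}{r(t)}\cdot\frac{(\check{p}_m-2)\tilde{p}_m}{r(t)+\tilde{p}-2}$, and similarly for the boundary piece. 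This matches the definition \eqref{P} of $\mathfrak{P}_{p,q}(t)$.

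The main obstacle will be the bookkeeping in step three: the interior and boundary pieces each involve the Luxemburg-norm ratio $\|w\|^{\theta_p(x,t)}_{_{r(t)+p_m(\cdot)-2,\Omega}}/|\|\mathbf{w}\||^{r(t)}_{_{r(t)}}$ (and its boundary analogue), whose size depends on whether the respective norms are above or below $1$. Splitting carefully into the four resulting cases, using the definitions of $\theta_p,\theta_q$ in \eqref{4.2.09b}--\eqref{theta-q} and the variable exponent relations \eqref{L40}--\eqref{L06}, one verifies that in every regime the factor $\tilde{p}_m/(r(t)+\tilde{p}_m-2)$ (respectively $\tilde{q}_m/(r(t)+\tilde{q}_m-2)$) comes out correctly, and that the extra contributions absorb into $\kappa_p,\kappa_q$. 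Once this case analysis is cleanly organized, assembling $\mathfrak{P}_{p,q}(t)$ and $\mathfrak{Q}_{p,q}(t)$ and deducing \eqref{4.2.10} is a routine collection of terms from the four groups produced by interior/boundary and large/small norm.
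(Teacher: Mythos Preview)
Your overall plan---start from \eqref{4.2.02}, choose the free parameters, and match terms---is correct in spirit, but two of the key choices are wrong, and without them the argument does not close.

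First, your choice of $\epsilon_1,\epsilon_2$ as pure functions of $t$ cannot work. In \eqref{4.2.02} the entropy term $-\psi_{_{\Omega}}(g_p^{p_m}\log g_p)$ carries the solution-dependent prefactor $\frac{\mathfrak{C}^{\ast}_p(r(t)-1)\|w(t)\|^{\theta_p}_{_{r(t)+p_m(\cdot)-2,\Omega}}}{[\mathfrak{p}^-_r(t)]^{p^+_M}|\|\mathbf{w}(t)\||^{r(t)}_{_{r(t)}}}\cdot\frac{1}{\epsilon_1 C'_{\epsilon}}$. Unless $\epsilon_1$ itself contains the norm ratio $\|w\|^{\theta_p}/|\|\mathbf{w}\||^{r(t)}$, this prefactor does not reduce to a clean function of $r(t),r'(t)$, and you cannot compare it against the coefficient $r'(t)/r(t)$ in front of $\Psi_{\ast}(\mathbf{z}^{r(t)}\mathbf{Log}(\mathbf{z}))$. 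The paper takes
\[
\epsilon_1=\frac{\mathfrak{C}^{\ast}_p\,r(t)(r(t)+\tilde p_m-2)(r(t)-1)}{[\mathfrak{p}^-_r(t)]^{p^+_M}C'_{\epsilon}\,\tilde p_m\,r'(t)}\cdot\frac{\|w(t)\|^{\theta_p}_{_{r(t)+p_m(\cdot)-2,\Omega}}}{|\|\mathbf{w}(t)\||^{r(t)}_{_{r(t)}}},
\]
so that the prefactor becomes exactly $\frac{r'(t)\tilde p_m}{r(t)(r(t)+\tilde p_m-2)}$, and the norm ratio reappears only inside $-\log(\epsilon_1)$ as an additive $\log\bigl(\|w\|^{\theta_p}/|\|\mathbf{w}\||^{r(t)}\bigr)$ term. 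Your plan to ``absorb the norm ratio into $\kappa_p,\kappa_q$'' via a four-case split cannot succeed, since that ratio is unbounded in both directions.

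Second, the comparison you need between the entropy at exponent $r(t)$ and the entropy at exponent $r(t)+p_m(\cdot)-2$ is not a H\"older estimate; it is the convexity of $s\mapsto\log(|\|\Phi\||^s_{_{s}})$. The derivative of this map is $\mathbf{S}_{_{\overline{\Omega}}}(\Phi;s,s)+\log|\|\Phi\||_{_{s}}$, so monotonicity of the derivative gives (with the normalization $m_\sigma(\overline{\Omega})=1$)
\[
\mathbf{S}_{_{\overline{\Omega}}}(\mathbf{w};r+p_m-2,r+q_m-2)-\mathbf{S}_{_{\overline{\Omega}}}(\mathbf{w};r,r)\ \ge\ \log\frac{|\|\mathbf{w}\||_{_{r}}}{|\|\mathbf{w}\||_{_{r+p_m-2}}}+\log\frac{|\|\mathbf{w}\||_{_{r}}}{|\|\mathbf{w}\||_{_{r+q_m-2}}},
\]
which is exactly what converts the entropy difference into $\log|\|\mathbf{w}\||_{_{r(t)}}$ plus terms that cancel against the $\log(\|w\|^{\theta_p}/|\|\mathbf{w}\||^{r(t)})$ coming from $-\log(\epsilon_1)$. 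The coefficient $\frac{(\check p_m-2)\tilde p_m}{r(t)+\tilde p_m-2}$ then drops out of the algebra $\frac{r(t)\tilde p_m}{r(t)+\tilde p_m-2}-1+\bigl(1-\frac{\tilde p_m\theta_p}{r(t)+\tilde p_m-2}\bigr)$, not from any embedding constant. H\"older alone gives an inequality on the norms, not on their logarithmic derivatives in $s$, so it cannot produce \eqref{P}.
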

\indent\\
\begin{proof} Adopting the assumptions and notations as in Lemma \ref{Lem02}, we define the functionals:
$$S_{_{\mu,D}}(\eta(\cdot),\phi(t)):=\Psi_{_{D}}\left(\left\{\frac{|\phi(t)|}{|\|\Phi(t)\||_{_{\eta(\cdot)}}}\right\}^{\eta(\cdot)}
\log\left(\frac{|\phi(t)|}{|\|\Phi(t)\||_{_{\eta(\cdot)}}}\right)\right),$$
for a measurable function $\eta(\cdot)$, where $\mu=\lambda_N$ (the $N$-dimensional Lebesgue measure) in the case when $D=\Omega$,
and $\mu=\sigma$ in the case when $D=\Gamma$. Then, for continuous functions $\eta_1(\cdot),\,\eta_2(\cdot)$ on $\overline{\Omega}$, we put
$$\mathbf{S}_{_{\overline{\Omega}}}((\Phi(t);(\eta_1(\cdot),\eta_2(\cdot)):=S_{_{\lambda_N,\Omega}}(\eta_1(\cdot),\phi(t))+S_{_{\sigma,\Gamma}}(\eta_2(\cdot),\phi(t)),$$
where we recall that $\Phi(t):=(\phi(t),\phi(t)|_{_{\Gamma}})$. Letting $\Phi(t):=\mathbf{w}(t)=\mathbf{u}(t)-\mathbf{v}(t)$, we let\\
$$\tilde{p}_m:=\left\{
\begin{array}{lcl}
p^-_m,\,\,\,\,\textrm{if}\,\,\,S_{_{\lambda_N,\Omega}}(r(t)+p_m(\cdot)-2,\phi(t))\leq0,\\
p^+_m,\,\,\,\,\textrm{if}\,\,\,S_{_{\lambda_N,\Omega}}(r(t)+p_m(\cdot)-2,\phi(t))>0,\\
\end{array}
\right.
\,\,\,\,\,\,\,\,\tilde{q}_m:=\left\{
\begin{array}{lcl}
q^-_m,\,\,\,\,\textrm{if}\,\,\,S_{_{\sigma,\Gamma}}(r(t)+q_m(\cdot)-2,\phi(t))\leq0,\\
q^+_m,\,\,\,\,\textrm{if}\,\,\,S_{_{\sigma,\Gamma}}(r(t)+q_m(\cdot)-2,\phi(t))>0,\\
\end{array}
\right.$$
and
$$\epsilon=\left\{\begin{array}{lcl}
\kappa_p\,\displaystyle\frac{|\|\mathbf{w}(t)\||^{r(t)}_{_{r(t)}}}{|\|\mathbf{w}(t)\||^{\theta_p(x,t)}_{_{r(t)+p_m(\cdot)-2}}}-\mathfrak{G}_{p},
\,\,\,\,\,\,\,\textrm{if}\,\,\,
\displaystyle\frac{|\|\mathbf{w}(t)\||^{r(t)}_{_{r(t)}}}{|\|\mathbf{w}(t)\||^{\theta_p(x,t)}_{_{r(t)+p_m(\cdot)-2}}}>\mathfrak{G}_{p},\\[2ex]
\kappa_p\,\displaystyle\frac{|\|\mathbf{w}(t)\||^{r(t)}_{_{r(t)}}}{|\|\mathbf{w}(t)\||^{\theta_p(x,t)}_{_{r(t)+p_m(\cdot)-2}}}
\,\,\,\,\,\,\,\,\,\,\,\,\,\,\,\,\,\,\,\,\,\,\,\,\,\,\,\,\,\textrm{if}\,\,\,
\displaystyle\frac{|\|\mathbf{w}(t)\||^{r(t)}_{_{r(t)}}}{|\|\mathbf{w}(t)\||^{\theta_p(x,t)}_{_{r(t)+p_m(\cdot)-2}}}\leq\mathfrak{G}_{p},
\end{array}
\right.$$
and
$$\epsilon'=\left\{\begin{array}{lcl}
\kappa_q\,\displaystyle\frac{|\|\mathbf{w}(t)\||^{r(t)}_{_{r(t)}}}{|\|\mathbf{w}(t)\||^{\theta_q(x,t)}_{_{r(t)+q_m(\cdot)-2}}}-\mathfrak{G}_{q},
\,\,\,\,\,\,\,\textrm{if}\,\,\,
\displaystyle\frac{|\|\mathbf{w}(t)\||^{r(t)}_{_{r(t)}}}{|\|\mathbf{w}(t)\||^{\theta_q(x,t)}_{_{r(t)+q_m(\cdot)-2}}}>\mathfrak{G}_{q},\\[2ex]
\kappa_q\,\displaystyle\frac{|\|\mathbf{w}(t)\||^{r(t)}_{_{r(t)}}}{|\|\mathbf{w}(t)\||^{\theta_q(x,t)}_{_{r(t)+q_m(\cdot)-2}}}
\,\,\,\,\,\,\,\,\,\,\,\,\,\,\,\,\,\,\,\,\,\,\,\,\,\,\,\,\,\textrm{if}\,\,\,
\displaystyle\frac{|\|\mathbf{w}(t)\||^{r(t)}_{_{r(t)}}}{|\|\mathbf{w}(t)\||^{\theta_q(x,t)}_{_{r(t)+q_m(\cdot)-2}}}\leq\mathfrak{G}_{q},
\end{array}
\right.$$
and
$$\epsilon_1=\displaystyle\frac{\mathfrak{C}^{\ast}_pr(t)(r(t)+\tilde{p}_m-2)(r(t)-1)|\|\mathbf{w}(t)\||^{\theta_p(x,t)}_{_{r(t)+p_m(\cdot)-2}}}
{[\mathfrak{p}^-_r(t)]^{p^+_M}C'_{\epsilon}\tilde{p}_mr'(t)|\|\mathbf{w}(t)\||^{r(t)}_{_{r(t)}}},\,\,\,\,\,\,\,\,\,\,\,\,
\epsilon_2=\displaystyle\frac{\mathfrak{C}^{\ast}_qr(t)(r(t)+\tilde{q}_m-2)(r(t)-1)|\|\mathbf{w}(t)\||^{\theta_q(x,t)}_{_{r(t)+q_m(\cdot)-2}}}
{[\mathfrak{q}^-_r(t)]^{q^+_M}C''_{\epsilon'}\tilde{q}_mr'(t)|\|\mathbf{w}(t)\||^{r(t)}_{_{r(t)}}},$$\indent\\
where $\kappa_p,\,\kappa_q
\in[1,\infty)$ are constants chosen large enough (when necessary), such that
\begin{equation}\label{kp1}
-\psi_{_{\Omega}}\left(g_p(t)^{p_m(x)}\log(g_p(t))\right)-\log(\epsilon_1)+\displaystyle\frac{\mathfrak{C}^{\ast}_p(r(t)-1)\|w(t)\|
^{\theta_p(x,t)}_{_{r(t)+p_m(\cdot)-2,\Omega}}}{[\mathfrak{p}^-_r(t)]^{p^+_M}|\|\mathbf{w}(t)\||^{r(t)}_{_{r(t)}}}\epsilon\,\geq\,0
\end{equation}
and
\begin{equation}\label{kp2}
-\psi_{_{\Gamma}}\left(g_q(t)^{q_m(x)}\log(g_q(t))\right)-\log(\epsilon_2)+\displaystyle\frac{\mathfrak{C}^{\ast}_q(r(t)-1)
\|w(t)\|^{\theta_q(x,t)}_{_{r(t)+q_m(\cdot)-2,\Gamma}}}{[\mathfrak{q}^-_r(t)]^{q^+_M}|\|\mathbf{w}(t)\||^{r(t)}_{_{r(t)}}}\epsilon'\,\geq\,0.
\end{equation}
From (\ref{4.2.02}), recalling the definition of the coefficients $\mathfrak{p}^-_r(t)$ and
$\mathfrak{q}^-_r(t)$ (given in the proof of the previous lemma), the monotonicity of the logarithmic function,
and taking into account (\ref{kp1}) and (\ref{kp2}), we deduce that\\[2ex]
$\displaystyle\frac{d}{dt}\log\left(|\|\mathbf{w}(t)\||_{_{r(t)}}\right)\,\leq\,\displaystyle\frac{r'(t)}{r(t)}
\left[\mathbf{S}_{_{\overline{\Omega}}}(\mathbf{w}(t);r(t),r(t))-\mathbf{S}_{_{\overline{\Omega}}}(\mathbf{w}(t);r(t)+p_m(\cdot)-2,r(t)+q_m(\cdot)-2)\right]+$\\
$$-\frac{r'(t)\tilde{p}_m}{r(t)(r(t)+\tilde{p}_m-2)}\left[\log\left(\displaystyle\frac{\mathfrak{C}^{\ast}_pr(t)(r(t)+\tilde{p}_m-2)(r(t)-1)}{r'(t)
C'_{\epsilon}[\mathfrak{p}^-_r(t)]^{p^+_M}\tilde{p}_m}\right)+\log\left(\displaystyle\frac{|\|\mathbf{w}(t)\||^{\theta_p(x,t)}_{_{r(t)+p_m(\cdot)-2}}}
{|\|\mathbf{w}(t)\||^{r(t)}_{_{r(t)}}}\right)\right]+\frac{2\mathfrak{C}^{\ast}_p(r(t)-1)\kappa_p}{[\mathfrak{p}^-_r(t)]^{p^+_M}}+$$\\
$$-\frac{r'(t)\tilde{q}_m}{r(t)(r(t)+\tilde{q}_m-2)}\left[\log\left(\displaystyle\frac{\mathfrak{C}^{\ast}_qr(t)(r(t)+\tilde{q}_m-2)(r(t)-1)}{r'(t)
C''_{\epsilon'}[\mathfrak{q}^-_r(t)]^{q^+_M}\tilde{q}_m}\right)+\log\left(\displaystyle\frac{|\|\mathbf{w}(t)\||^{\theta_q(x,t)}_{_{r(t)+q_m(\cdot)-2}}}
{|\|\mathbf{w}(t)\||^{r(t)}_{_{r(t)}}}\right)\right]+\frac{2\mathfrak{C}^{\ast}_q(r(t)-1)\kappa_q}{[\mathfrak{q}^-_r(t)]^{q^+_M}}$$\\
$\leq\,\displaystyle\frac{r'(t)}{r(t)}
\left[\mathbf{S}_{_{\overline{\Omega}}}(\mathbf{w}(t);r(t),r(t))-\mathbf{S}_{_{\overline{\Omega}}}(\mathbf{w}(t);r(t)+p_m(\cdot)-2,r(t)+q_m(\cdot)-2)\right]+$\\
$$-\frac{r'(t)\tilde{p}_m}{r(t)(r(t)+\tilde{p}_m-2)}\left[\log\left(\displaystyle\frac{\mathfrak{C}^{\ast}_pr(t)(r(t)-1)\tilde{p}^{^{p^+_M-1}}_m}{r'(t)
C'_{\epsilon}(r(t)+\tilde{p}_m-2)^{^{p^+_M-1}}}\right)+\log\left(\displaystyle\frac{|\|\mathbf{w}(t)\||^{\theta_p(x,t)}_{_{r(t)+p_m(\cdot)-2}}}
{|\|\mathbf{w}(t)\||^{r(t)}_{_{r(t)}}}\right)\right]+\frac{2\mathfrak{C}^{\ast}_p(r(t)-1)\kappa_p\tilde{p}^{^{p^+_M}}_m}{(r(t)+\tilde{p}_m-2)^{^{p^+_M}}}+$$\\
$$-\frac{r'(t)\tilde{q}_m}{r(t)(r(t)+\tilde{q}_m-2)}\left[\log\left(\displaystyle\frac{\mathfrak{C}^{\ast}_qr(t)(r(t)-1)\tilde{q}^{^{q^+_M-1}}_m}{r'(t)
C''_{\epsilon'}(r(t)+\tilde{q}_m-2)^{^{q^+_M-1}}}\right)+\log\left(\displaystyle\frac{|\|\mathbf{w}(t)\||^{\theta_q(x,t)}_{_{r(t)+q_m(\cdot)-2}}}
{|\|\mathbf{w}(t)\||^{r(t)}_{_{r(t)}}}\right)\right]+\frac{2\mathfrak{C}^{\ast}_q(r(t)-1)\kappa_q\tilde{q}^{^{q^+_M}}_m}{(r(t)+\tilde{q}_m-2)^{^{q^+_M}}}.$$\\[1ex]
At this stage, for simplicity, we suppose that $m_{\sigma}(\overline{\Omega}):=|\Omega|+\sigma(\Gamma)=1$. We also recall that the mapping
$s\mapsto\log\left(|\|\Phi\||^s_{_{s}}\right)$ is convex for each $\Phi\in\mathbb{X\!}^{\,s}(\overline{\Omega})$ fixed. In addition,
the mapping $s\mapsto\frac{d}{ds}\log\left(|\|\Phi\||^s_{_{s}}\right)$ is non-decreasing, with
$$\frac{d}{ds}\log\left(|\|\Phi\||^s_{_{s}}\right)=S_{_{\overline{\Omega}}}(\Phi;s,s)+\log\left(|\|\Phi\||_{_{s}}\right)\,\,\,\,\,
\textrm{for a.e.}\,\,s.$$
Consequently, using the above facts together with the assumption $m_{\sigma}(\overline{\Omega})=1$,
H\"older's inequality, and the monotonicity of the logarithmic function, it follows that\\
\begin{equation}\label{4.2.11}
\mathbf{S}_{_{\overline{\Omega}}}(\mathbf{w}(t);r(t)+p_m(\cdot)-2,r(t)+q_m(\cdot)-2)-\mathbf{S}_{_{\overline{\Omega}}}(\mathbf{w}(t);r(t),r(t))\,\leq\,
\log\left(\frac{|\|\mathbf{w}(t)\||_{_{r(t)}}}{|\|\mathbf{w}(t)\||_{_{r(t)+p_m(\cdot)-2}}}\right)+
\log\left(\frac{|\|\mathbf{w}(t)\||_{_{r(t)}}}{|\|\mathbf{w}(t)\||_{_{r(t)+q_m(\cdot)-2}}}\right).
\end{equation}\indent\\
Applying (\ref{4.2.11}) to the previous calculation gives\\[2ex]
\indent$\displaystyle\frac{d}{dt}\log\left(|\|\mathbf{w}(t)\||_{_{r(t)}}\right)$\\
$$\leq\,\frac{r'(t)}{r(t)}\left[\log\left(|\|\mathbf{w}(t)\||_{_{r(t)+p_m(\cdot)-2}}\right)-\log\left(|\|\mathbf{w}(t)\||_{_{r(t)}}\right)+
\log\left(|\|\mathbf{w}(t)\||_{_{r(t)+q_m(\cdot)-2}}\right)-\log\left(|\|\mathbf{w}(t)\||_{_{r(t)}}\right)\right]+\indent\indent\indent\indent$$
$$+\frac{r'(t)\tilde{p}_m}{(r(t)+\tilde{p}_m-2)}\log\left(|\|\mathbf{w}(t)\||_{_{r(t)}}\right)
-\frac{r'(t)\tilde{p}_m\theta_p(x,t)}{r(t)(r(t)+\tilde{p}_m-2)}\log\left(|\|\mathbf{w}(t)\||_{_{r(t)+p_m(\cdot)-2}}\right)+$$
$$+\frac{r'(t)\tilde{q}_m}{(r(t)+\tilde{q}_m-2)}\log\left(|\|\mathbf{w}(t)\||_{_{r(t)}}\right)
-\frac{r'(t)\tilde{q}_m\theta_q(x,t)}{r(t)(r(t)+\tilde{q}_m-2)}\log\left(|\|\mathbf{w}(t)\||_{_{r(t)+q_m(\cdot)-2}}\right)+$$
$$\indent\indent+\frac{2\mathfrak{C}^{\ast}_p(r(t)-1)\kappa_p\tilde{p}^{^{p^+_M}}_m}{(r(t)+\tilde{p}_m-2)^{^{p^+_M}}}-
\frac{r'(t)\tilde{p}_m}{r(t)(r(t)+\tilde{p}_m-2)}\log\left(\displaystyle\frac{\mathfrak{C}^{\ast}_pr(t)(r(t)-1)\tilde{p}^{^{p^+_M-1}}_m}{r'(t)
C'_{\epsilon}(r(t)+\tilde{p}_m-2)^{^{p^+_M-1}}}\right)+$$
$$\indent\indent+\frac{2\mathfrak{C}^{\ast}_q(r(t)-1)\kappa_q\tilde{q}^{^{q^+_M}}_m}{(r(t)+\tilde{q}_m-2)^{^{q^+_M}}}-
\frac{r'(t)\tilde{q}_m}{r(t)(r(t)+\tilde{q}_m-2)}\log\left(\displaystyle\frac{\mathfrak{C}^{\ast}_qr(t)(r(t)-1)\tilde{q}^{^{q^+_M-1}}_m}{r'(t)
C''_{\epsilon'}(r(t)+\tilde{q}_m-2)^{^{q^+_M-1}}}\right)$$
$$=\frac{r'(t)}{r(t)}\left[\left(1-\frac{\tilde{p}_m\theta_p(x,t)}{r(t)+\tilde{p}_m-2}\right)\log\left(|\|\mathbf{w}(t)\||_{_{r(t)+p_m(\cdot)-2}}\right)
+\left(1-\frac{\tilde{q}_m\theta_q(x,t)}{r(t)+\tilde{q}_m-2}\right)\log\left(|\|\mathbf{w}(t)\||_{_{r(t)+q_m(\cdot)-2}}\right)\right]+$$
$$\indent\indent+\frac{r'(t)}{r(t)}\left(\frac{r(t)\tilde{p}_m}{r(t)+\tilde{p}_m-2}+\frac{r(t)\tilde{q}_m}{r(t)+\tilde{q}_m-2}-2\right)
\log\left(|\|\mathbf{w}(t)\||_{_{r(t)}}\right)+$$
$$\indent\indent+\frac{2\mathfrak{C}^{\ast}_p(r(t)-1)\kappa_p\tilde{p}^{^{p^+_M}}_m}{(r(t)+\tilde{p}_m-2)^{^{p^+_M}}}-
\frac{r'(t)\tilde{p}_m}{r(t)(r(t)+\tilde{p}_m-2)}\log\left(\displaystyle\frac{\mathfrak{C}^{\ast}_pr(t)(r(t)-1)\tilde{p}^{^{p^+_M-1}}_m}{r'(t)
C'_{\epsilon}(r(t)+\tilde{p}_m-2)^{^{p^+_M-1}}}\right)+$$
$$\indent\indent+\frac{2\mathfrak{C}^{\ast}_q(r(t)-1)\kappa_q\tilde{q}^{^{q^+_M}}_m}{(r(t)+\tilde{q}_m-2)^{^{q^+_M}}}-
\frac{r'(t)\tilde{q}_m}{r(t)(r(t)+\tilde{q}_m-2)}\log\left(\displaystyle\frac{\mathfrak{C}^{\ast}_qr(t)(r(t)-1)\tilde{q}^{^{q^+_M-1}}_m}{r'(t)
C''_{\epsilon'}(r(t)+\tilde{q}_m-2)^{^{q^+_M-1}}}\right)$$\\[2ex]
\,\,\,\,\,\,\,\,\,\,\,\,\,\,\,\,\,\,$\indent\indent\leq\,\displaystyle\frac{r'(t)}{r(t)}
\left(\displaystyle\frac{(2-\check{p}_m)\tilde{p}_m}{r(t)+\tilde{p}-2}+\frac{(2-\check{q}_m)\tilde{q}_m}{r(t)+\tilde{q}-2}
\right)\log\left(|\|\mathbf{w}(t)\||_{_{r(t)}}\right)+$\\
$$\indent\indent+\frac{2\mathfrak{C}^{\ast}_p(r(t)-1)\kappa_p\tilde{p}^{^{p^+_M}}_m}{(r(t)+\tilde{p}_m-2)^{^{p^+_M}}}-
\frac{r'(t)\tilde{p}_m}{r(t)(r(t)+\tilde{p}_m-2)}\log\left(\displaystyle\frac{\mathfrak{C}^{\ast}_pr(t)(r(t)-1)\tilde{p}^{^{p^+_M-1}}_m}{r'(t)
C'_{\epsilon}(r(t)+\tilde{p}_m-2)^{^{p^+_M-1}}}\right)+$$
$$\indent\indent+\frac{2\mathfrak{C}^{\ast}_q(r(t)-1)\kappa_q\tilde{q}^{^{q^+_M}}_m}{(r(t)+\tilde{q}_m-2)^{^{q^+_M}}}-
\frac{r'(t)\tilde{q}_m}{r(t)(r(t)+\tilde{q}_m-2)}\log\left(\displaystyle\frac{\mathfrak{C}^{\ast}_qr(t)(r(t)-1)\tilde{q}^{^{q^+_M-1}}_m}{r'(t)
C''_{\epsilon'}(r(t)+\tilde{q}_m-2)^{^{q^+_M-1}}}\right)$$
where
\begin{equation}\label{check-p-q}
\check{p}_m:=\left\{
\begin{array}{lcl}
p^-_{m},\,\,\,\,\,\,\,\,\,\,\,\,\textrm{if}\,\,\,\,\|w(t)\|_{_{r(t)+p_m(\cdot)-2,\Omega}}>1,\\[1ex]
p^+_{m},\,\,\,\,\,\,\,\,\,\,\,\,\textrm{if}\,\,\,\,\|w(t)\|_{_{r(t)+p_m(\cdot)-2,\Omega}}\leq1,\\
\end{array}
\right.\indent\textrm{and}\indent\check{q}_m:=\left\{
\begin{array}{lcl}
q^-_{m},\,\,\,\,\,\,\,\,\,\,\,\,\textrm{if}\,\,\,\,\|w(t)\|_{_{r(t)+q_m(\cdot)-2,\Gamma}}>1,\\[1ex]
q^+_{m},\,\,\,\,\,\,\,\,\,\,\,\,\textrm{if}\,\,\,\,\|w(t)\|_{_{r(t)+q_m(\cdot)-2,\Gamma}}\leq1,\\
\end{array}
\right.
\end{equation}
From here, the proof is completed by setting $\mathfrak{P}_{p,q}(t)\geq0$ and $\mathfrak{Q}_{p,q}(t)$ as in (\ref{P}) and (\ref{Q}), respectively.
\end{proof}

\begin{remark}\label{JDE}
In \cite{BOU-AVS18}, the authors established for the first time an ultracontractivity-type bound for nonlnear semigroups associated to an anisotropic
boundary value problem. To develop such bound for anisotropic differential equations, in the last two
inequalities of \cite[proof of Lemma 10]{BOU-AVS18}, it was argued that $-\displaystyle\sum^N_{i=1}|\pxi u|^{p_i(x)}\leq -c
\displaystyle\sum^N_{i=1}|\pxi u|^{p_m(x)}$ for some constant $c>0$ and a.e.~$x\in\Omega$, and this was used to obtain the fulfillment of the
rest of the results following the approach as in \cite{VELEZ2012-3}. Unfortunately, the correct inequality reads as
$$\displaystyle\sum^N_{i=1}|\pxi u|^{p_i(x)}\leq -c_{\epsilon}
\displaystyle\sum^N_{i=1}|\pxi u|^{p_m(x)}+\epsilon\,\,\,\,\,\,\,\,\textrm{for a.e.}\,\,x\in\Omega,\,\,\,\,\,\textrm{for each}\,\,\epsilon>0,
\,\,\,\,\,\textrm{and for some constant}\,\,c_{\epsilon}>0.$$
and it may not be valid for $\epsilon=0$. Furthermore, the presence of a positive constant in the above correct inequality is sufficient
to hinder the subsequent results, so a different approach needed to be followed. Consequently, in the above lemmas, we
kept the anisotropy intact, and instead derived an alternative way to deduce the appropriate estimations and modifications
to obtain desirable conclusions in the key technical lemmas, which form a basis for the proofs of the main results of the section.
In particular, the methods employed in this paper provide a way to fix the error used in the derivation of ultracontractivity bounds in \cite{BOU-AVS18}.
\end{remark}

\indent Now we are ready to state and prove the main result of this subsection, which is following.\\

\begin{theorem}\label{MainT1}
Let $(\pp,\,\qq)\in C^{0,1}(\overline{\Omega})^N\times C^{0,1}(\overline{\Omega})^{N-1}$
be such that $2<p^-_m\leq p^+_M<\infty$ and $2<q^-_m\leq q^+_M<\infty$, assume that $(p_m,q_m)\in C^{0,1}(\overline{\Omega})\times
C^{0,1}(\overline{\Omega})$, and let $\left\{T_{\sigma}(t)\right\}_{t\geq0}$ be the submarkovian
$C_0$-semigroup on $\mathbb{X\!}^{\,2}(\overline{\Omega})$ generated by $\partial\Phi_{\sigma}$.
Let $(r,s)\in\mathcal{P}(\Omega)\times\mathcal{P}(\Gamma)$ be such that $2\leq r^-\leq r^+\leq\infty$ and
$2\leq s^-\leq s^+\leq\infty$. Then there exist positive constants $C,\,C',\,\kappa$, and $\gamma\in(0,1)$ such that\\
\begin{equation}\label{4.2.12}
|\|T_{\sigma}(t)\mathbf{u}_0-T_{\sigma}(t)\mathbf{v}_0\||_{_{\infty}}\,\leq\,
C\,e^{C't}t^{-\kappa}|\|\mathbf{u}_0-\mathbf{v}_0\||^{\gamma}_{_{r(\cdot),s(\cdot)}},
\end{equation}
for every $\mathbf{u}_0,\,\mathbf{v}_0\in\mathbb{X\!}^{\,r(\cdot),s(\cdot)}(\overline{\Omega})$, for all $t>0$,
where $$\gamma:=\left(\frac{r^-\wedge s^-}
{(r^-\wedge s^-)+\tilde{p}_m-2}\right)^{^{\frac{(\check{p}_m-2)\tilde{p}_m}{\tilde{p}_m-2}}}
\left(\frac{r^-\wedge s^-}{(r^-\wedge s^-)+\tilde{q}_m-2}\right)^{^{\frac{(\check{q}_m-2)\tilde{q}_m}{\tilde{q}_m-2}}}\in(0,1),$$
$\kappa=k_1>0$, and $C'=k_2>0$, for $k_1,\,k_2$ given by (\ref{k1}) and (\ref{k2}), respectively.
\end{theorem}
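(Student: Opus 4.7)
The plan is to solve the Gronwall-type differential inequality furnished by Lemma \ref{Lem03} along a suitable time-dependent exponent, and then to pass to the $L^\infty$-limit. This follows the Davies/Cipriani-Grillo/Warma/V\'elez-Santiago template, adapted to the anisotropic Wentzell setting. By the density of $\mathbb{X}^\infty(\overline{\Omega})$ in $\mathbb{X}^{r(\cdot),s(\cdot)}(\overline{\Omega})$, combined with the non-expansiveness of $\{T_\sigma(t)\}$ from Theorem \ref{A03}, I first reduce to the case $\mathbf{u}_0,\mathbf{v}_0\in\mathbb{X}^\infty(\overline{\Omega})$, so that every $r(s)$-norm appearing below is finite and the hypotheses of Lemma \ref{Lem01} (and hence of Lemmas \ref{Lem02} and \ref{Lem03}) are in force.

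Fix $t>0$ and a target $R>r_0:=r^-\wedge s^-$, and choose a strictly increasing differentiable $r_R:[0,t]\to[r_0,R]$ with $r_R(0)=r_0$ and $r_R(t)=R$; the affine choice $r_R(s)=r_0+(R-r_0)s/t$ will do. Setting $y(s):=\log(|\|\mathbf{u}(s)-\mathbf{v}(s)\||_{r_R(s)})$, Lemma \ref{Lem03} becomes $y'(s)+\mathfrak{P}_{p,q}(s)\,y(s)\leq-\mathfrak{Q}_{p,q}(s)$. Multiplying by the integrating factor $\mu(s):=\exp(\int_0^s \mathfrak{P}_{p,q}(\tau)\,d\tau)$ and integrating yields
$$y(t)\,\leq\,\mu(t)^{-1}y(0)-\mu(t)^{-1}\int_0^t \mu(s)\,\mathfrak{Q}_{p,q}(s)\,ds.$$
Exponentiating, using the continuous embedding $|\|\cdot\||_{r_0}\lesssim |\|\cdot\||_{r(\cdot),s(\cdot)}$, and letting $R\to\infty$ (so that $|\|\mathbf{u}(t)-\mathbf{v}(t)\||_R\to|\|\mathbf{u}(t)-\mathbf{v}(t)\||_\infty$ by monotone convergence on the finite-measure space $(\overline{\Omega},dx+d\sigma)$) yields a bound of the form (\ref{4.2.12}), where $\gamma=\exp(-\int_0^\infty \mathfrak{P}_{p,q})$, and where $\kappa$ and $C'$ come from the $\mathfrak{Q}_{p,q}$ integral.

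The identification of $\gamma$ is where the explicit formula emerges. Via the substitution $u=r_R(s)$, the integral $\int_0^t \mathfrak{P}_{p,q}(s)\,ds$ becomes
$$\int_{r_0}^R \frac{1}{u}\left(\frac{(\check{p}_m-2)\tilde{p}_m}{u+\tilde{p}_m-2}+\frac{(\check{q}_m-2)\tilde{q}_m}{u+\tilde{q}_m-2}\right)du,$$
which is path-independent. A partial-fraction decomposition $\frac{1}{u(u+a)}=\frac{1}{a}\bigl(\frac{1}{u}-\frac{1}{u+a}\bigr)$ applied to each summand integrates each piece to a constant multiple of $\log\bigl(u/(u+\tilde{p}_m-2)\bigr)$ and $\log\bigl(u/(u+\tilde{q}_m-2)\bigr)$, respectively. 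Letting $R\to\infty$ kills the upper limit, leaving precisely $-\log\gamma$ with the formula stated in the theorem. The constants $\kappa$ and $C'$ then arise by inserting the affine $r_R$ into $\mu(t)^{-1}\int_0^t \mu(s)\,\mathfrak{Q}_{p,q}(s)\,ds$: the logarithmic factors $\log\bigl(\mathfrak{C}^\ast_p r(s)(r(s)-1)\tilde{p}_m^{p_M^+-1}/r'(s) C'_\epsilon(r(s)+\tilde{p}_m-2)^{p_M^+-1}\bigr)$ and its $\qq$-analogue produce, after the passage to $R\to\infty$, the $-\kappa\log t$ contribution responsible for the $t^{-\kappa}$ prefactor, while the remaining bounded terms aggregate to the linear growth $C't$ in the exponential. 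The explicit evaluation is carried out in Appendix A, yielding $\kappa=k_1$ and $C'=k_2$.

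The main obstacle will be the bookkeeping in the $\mathfrak{Q}_{p,q}$-integral, which requires a case analysis according to whether the norms $\|w(s)\|_{r(s)+p_m(\cdot)-2,\Omega}$ and $\|w(s)\|_{r(s)+q_m(\cdot)-2,\Gamma}$ exceed $1$; this pins down the pairs $(\check{p}_m,\tilde{p}_m)$ and $(\check{q}_m,\tilde{q}_m)$ and must be maintained consistently across the interval $[0,t]$. Unlike the isotropic case of \cite{VELEZ2012-3}, the two \emph{unrelated} vector fields $\pp$ and $\qq$ generate independent interior and boundary contributions that must be combined without degrading $\gamma$ below $1$; that they multiply (rather than add or compete) is a structural consequence of $\int_0^t\mathfrak{P}_{p,q}$ decoupling additively, which is precisely what makes the partial-fraction computation tractable. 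Ensuring that a single affine $r_R$ is compatible with all four regimes simultaneously, and that the $R\to\infty$ limit commutes with the integrals, is where the anisotropy requires genuine care beyond the scalar-exponent case.
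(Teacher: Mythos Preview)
Your overall strategy---apply Lemma \ref{Lem03} along a time-dependent exponent, solve the resulting Gronwall inequality via an integrating factor, then pass to $L^\infty$---is precisely the paper's approach. The identification of $\gamma$ via the path-independent integral $\int_{r_0}^{R}\frac{1}{u}\bigl(\frac{(\check{p}_m-2)\tilde{p}_m}{u+\tilde{p}_m-2}+\frac{(\check{q}_m-2)\tilde{q}_m}{u+\tilde{q}_m-2}\bigr)\,du$ is also correct.

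The gap is your claim that ``the affine choice $r_R(s)=r_0+(R-r_0)s/t$ will do.'' It will not. Unlike $\int_0^t\mathfrak{P}_{p,q}$, the quantity $\int_0^t\mu\,\mathfrak{Q}_{p,q}$ is \emph{not} path-independent, because $r'$ appears inside the logarithm in (\ref{Q}) and as a bare denominator in the non-logarithmic terms after the substitution $u=r_R(s)$. With the affine choice one has $r'\equiv(R-r_0)/t$, and the logarithmic summand of $\mathfrak{Q}_{p,q}$ contributes, after substitution, a term
\[
-\log(R-r_0)\int_{r_0}^{R}\tilde{\mu}(u)\,\frac{\tilde{p}_m}{u(u+\tilde{p}_m-2)}\,du,
\]
whose coefficient converges to a finite positive limit while $\log(R-r_0)\to\infty$. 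Hence $-\int_0^t\mu\,\mathfrak{Q}_{p,q}\to+\infty$ as $R\to\infty$, and the bound on $y(t)$ is vacuous. (The non-logarithmic terms go the other way: they vanish like $t/(R-r_0)$, so you also lose the $e^{C't}$ factor.) Your later appeal to Appendix~A is inconsistent: those computations are performed for a \emph{different} exponent function, not the affine one.

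The paper's choice $\theta(\xi)=\dfrac{t(r^-\wedge s^-)}{t-\xi}$ on $[0,t)$ is not incidental; it is exactly what keeps $\theta'/\theta^2$ of order $1/t$, so that the argument of the logarithm in (\ref{Q-rs}) tends to zero only like a power of $(t-\xi)$, yielding an integrable singularity at $\xi=t^-$ and a finite limit of the form $k_1\log t-k_2t+k_3$ (Proposition~A1). Moreover, the passage to $L^\infty$ is done by first using non-expansiveness, $|\|\mathbf{u}(t)-\mathbf{v}(t)\||_{\theta(\xi)}\le|\|\mathbf{u}(\xi)-\mathbf{v}(\xi)\||_{\theta(\xi)}=e^{\mathbf{Y}(\xi)}\le e^{\mathbf{X}(\xi)}$, and \emph{then} letting $\xi\to t^-$; this replaces your separate $R\to\infty$ limit. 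Replacing your affine $r_R$ by $\theta$ and following this endpoint argument repairs the proof.
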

\indent\\
\begin{proof}
We first recall that since the embedding and trace theorems become sharper whenever $p^+_M\geq N$ and $q^+_M\geq N-1$,
we concentrate in the critical case, namely, when $p^+_M<N$ and $q^+_M<N-1$. We can also assume (without loss of generality) that
$\max\{r^+,s^+\}<\infty$. To proceed with the proof, we first take $\mathbf{u}_0,\,\mathbf{v}_0\in\mathbb{X\!}^{\,\infty}(\overline{\Omega})$,
and we put $\mathbf{u}(t):=T_{\sigma}(t)\mathbf{u}_0$ and $\mathbf{v}(t):=T_{\sigma}(t)\mathbf{v}_0$. Given
$\theta:[0,\infty)\rightarrow[2,\infty)$ an increasing differentiable function, let $\mathfrak{P}_{p,q}(t),\,\mathfrak{Q}_{p,q}(t)$ be defined as
in (\ref{P}) and (\ref{Q}) (for $r(t)=\theta(t)$), and set\\
$$\mathbf{Y}(t):=\log\left(|\|\mathbf{u}(t)-\mathbf{v}(t)\||_{_{\theta(t)}}\right).$$ Then by (\ref{4.2.10}) the function
$\mathbf{Y}(t)$ fulfills the ordinary differential inequality
\begin{equation}\label{4.2.13}
\mathbf{Y}'(t)+\mathfrak{P}_{p,q}(t)\mathbf{Y}(t)+\mathfrak{Q}_{p,q}(t)\,\leq\,0.
\end{equation}
Now, the unique solution of the differential equation
$$\mathbf{X}'(t)+\mathfrak{P}_{p,q}(t)\mathbf{X}(t)+\mathfrak{Q}_{p,q}(t)=0,\indent\indent\mathbf{X}(0)=\mathbf{Y}(0)$$
is given by
\begin{equation}\label{4.2.14}
\mathbf{X}(t)=\exp\left(-\int^t_0\mathfrak{P}_{p,q}(\xi)\,d\xi\right)\left(\mathbf{Y}(0)-\displaystyle\int^t_0\mathfrak{Q}_{p,q}(\xi)
\exp\left(\int^\xi_0\mathfrak{P}_{p,q}(\tau)\,d\tau\right)d\xi\right),
\end{equation}\indent\\
and thus the solution $\mathbf{Y}(t)$ of (\ref{4.2.13}) satisfies $\mathbf{Y}(t)\leq\mathbf{X}(t)$ over $[0,\infty)$.
Next, given $(r,s)\in\mathcal{P}(\Omega)\times\mathcal{P}(\Gamma)$ as in the theorem, fix $t>0$, take $\xi\in[0,t)$ and set
$$\theta(\xi):=\frac{t(r^-\wedge s^-)}{t-\xi},\indent\indent\indent\textrm{for}\,\,\,r^-\wedge s^-:=\min\{r^-,s^-\}\,\geq\,2.$$
Then $\theta(\cdot)$ is an increasing differentiable function over $[0,t)$, with
$\theta(\xi)\geq2$ for all $\xi\in[0,t)$. Moreover, using this function, we obtain that
\begin{equation}\label{P-rs}
\mathfrak{P}_{p,q}(\xi):=\displaystyle\frac{(\check{p}_m-2)\tilde{p}_m}{[(r^-\wedge s^-)+\tilde{p}_m-2]t-(\tilde{p}_m-2)\xi}+
\displaystyle\frac{(\check{q}_m-2)\tilde{q}_m}{[(r^-\wedge s^-)+\tilde{q}_m-2]t-(\tilde{q}_m-2)\xi}\indent\indent\indent\indent\indent
\end{equation}
and\\[2ex]
$\mathfrak{Q}_{p,q}(\xi):=\displaystyle\frac{\tilde{p}_m}{[(r^-\wedge s^-)+\tilde{p}_m-2]t-(\tilde{p}_m-2)\xi}
\log\left(\displaystyle\frac{\mathfrak{C}^{\ast}_p\,\tilde{p}^{^{p^+_M-1}}_m[(r^-\wedge s^-)t-\xi](t-\xi)^{^{p^+_M-1}}}{
C'_{\epsilon}\left\{[(r^-\wedge s^-)+\tilde{p}_m-2]t-(\tilde{p}_m-2)\xi\right\}^{^{p^+_M-1}}}\right)+$\\
$$\indent\indent\indent\indent\indent\indent\indent\indent\indent\indent
-2\mathfrak{C}^{\ast}_p\kappa_p\left(\displaystyle\frac{\tilde{p}_m[(r^-\wedge s^-)t-\xi]^{^{\frac{1}{p^+_M}}}(t-\xi)^{^{\frac{p^+_M-1}{p^+_M}}}}
{[(r^-\wedge s^-)+\tilde{p}_m-2]t-(\tilde{p}_m-2)\xi}\right)^{p^+_M}+$$
$$\indent\indent\indent\indent+\displaystyle\frac{\tilde{q}_m}{[(r^-\wedge s^-)+\tilde{q}_m-2]t-(\tilde{q}_m-2)\xi}
\log\left(\displaystyle\frac{\mathfrak{C}^{\ast}_q\,\tilde{q}^{^{q^+_M-1}}_m[(r^-\wedge s^-)t-\xi](t-\xi)^{^{q^+_M-1}}}{
C''_{\epsilon'}\left\{[(r^-\wedge s^-)+\tilde{q}_m-2]t-(\tilde{q}_m-2)\xi\right\}^{^{q^+_M-1}}}\right)+$$
\begin{equation}\label{Q-rs}
\indent\indent\indent\indent\indent\indent\indent\indent\indent\indent
-2\mathfrak{C}^{\ast}_q\kappa_q\left(\displaystyle\frac{\tilde{q}_m[(r^-\wedge s^-)t-\xi]^{^{\frac{1}{q^+_M}}}(t-\xi)^{^{\frac{q^+_M-1}{q^+_M}}}}
{[(r^-\wedge s^-)+\tilde{q}_m-2]t-(\tilde{q}_m-2)\xi}\right)^{q^+_M}
\end{equation}
\indent\\
(here we recall that the explicit definitions of the constants appearing in the above expressions are given in Lemma \ref{Lem02} and Lemma \ref{Lem03}).
Since\\[2ex] \,\,\,$\displaystyle\int^{\xi}_0\mathfrak{P}_{p,q}(\tau)\,d\tau$\\
$$=\frac{(\check{p}_m-2)\tilde{p}_m}{\tilde{p}_m-2}\log\left(\frac{[(r^-\wedge s^-)+\tilde{p}_m-2]t}{[(r^-\wedge s^-)+\tilde{p}_m-2]t-(\tilde{p}_m-2)\xi}\right)+
\frac{(\check{q}_m-2)\tilde{q}_m}{\tilde{q}_m-2}\log\left(\frac{[(r^-\wedge s^-)+\tilde{q}_m-2]t}{[(r^-\wedge s^-)+
\tilde{q}_m-2]t-(\tilde{q}_m-2)\xi}\right),$$ we see that
$$\exp\left(-\int^{\xi}_0\mathfrak{P}_{p,q}(\tau)\,d\tau\right)=\left(\frac{[(r^-\wedge s^-)+\tilde{p}_m-2]t-(\tilde{p}_m-2)\xi}
{[(r^-\wedge s^-)+\tilde{p}_m-2]t}\right)^{^{\frac{(\check{p}_m-2)\tilde{p}_m}{\tilde{p}_m-2}}}
\left(\frac{[(r^-\wedge s^-)+\tilde{q}_m-2]t-(\tilde{q}_m-2)\xi}
{[(r^-\wedge s^-)+\tilde{q}_m-2]t}\right)^{^{\frac{(\check{q}_m-2)\tilde{q}_m}{\tilde{q}_m-2}}}.$$ Thus,
\begin{equation}\label{lim1}
\displaystyle\lim_{\xi\rightarrow t^{-}}\exp\left(-\int^{\xi}_0\mathfrak{P}_{p,q}(\tau)\,d\tau\right)=\left(\frac{r^-\wedge s^-}
{(r^-\wedge s^-)+\tilde{p}_m-2}\right)^{^{\frac{(\check{p}_m-2)\tilde{p}_m}{\tilde{p}_m-2}}}
\left(\frac{r^-\wedge s^-}{(r^-\wedge s^-)+\tilde{q}_m-2}\right)^{^{\frac{(\check{q}_m-2)\tilde{q}_m}{\tilde{q}_m-2}}}.
\end{equation}
\indent\\
Furthermore, calculating, we find that\\
$$\displaystyle\lim_{\xi\rightarrow t^{-}}\left(\mathbf{Y}(0)-\int^{\xi}_0\mathfrak{Q}_{p,q}(\tau)\,\exp\left(\int^{\tau}_0
\mathfrak{P}_{p,q}(l)\,dl\right)\,d\tau\right)=\mathbf{Y}(0)-k_1\log(t)+k_2t-k_3$$ and
\begin{equation}\label{lim2}
\displaystyle\lim_{\xi\rightarrow t^{-}}\mathbf{X}(\xi)=\left(\frac{r^-\wedge s^-}
{(r^-\wedge s^-)+\tilde{p}_m-2}\right)^{^{\frac{(\check{p}_m-2)\tilde{p}_m}{\tilde{p}_m-2}}}
\left(\frac{r^-\wedge s^-}{(r^-\wedge s^-)+\tilde{q}_m-2}\right)^{^{\frac{(\check{q}_m-2)\tilde{q}_m}{\tilde{q}_m-2}}}
\left(\mathbf{Y}(0)-k_1\log(t)+k_2t-k_3\right),
\end{equation}
\indent\\
where the constants $k_1,\,k_2,\,k_3$ have been computed explicitly in Appendix A (see Proposition A1).
Since $T_{\sigma}$ is non-expansive over $\mathbb{X\!}^{\,\theta(\xi)}(\overline{\Omega})$, it follows that\\
\begin{equation}\label{4.2.15}
|\|\mathbf{u}(t)-\mathbf{v}(t)\||_{_{\theta(\xi)}}\,\leq\,|\|\mathbf{u}(\xi)-\mathbf{v}(\xi)\||_{_{\theta(\xi)}}
\,\leq\,e^{\mathbf{Y}(\xi)}\,\leq\,e^{\mathbf{X}(\xi)},
\end{equation}\indent\\
whenever $0\leq s<t$. Using the fact that $Y(0)=\log\left(|\|\mathbf{u}_0-\mathbf{v}_0\||_{_{\theta(0)}}\right)$,
$\theta(0)=r^-\wedge s^-$ and $\displaystyle\lim_{\xi\rightarrow t^{-}}\theta(\xi)=\infty$, setting
$\kappa:=k_1>0$ and $$\gamma:=\left(\frac{r^-\wedge s^-}
{(r^-\wedge s^-)+\tilde{p}_m-2}\right)^{^{\frac{(\check{p}_m-2)\tilde{p}_m}{\tilde{p}_m-2}}}
\left(\frac{r^-\wedge s^-}{(r^-\wedge s^-)+\tilde{q}_m-2}\right)^{^{\frac{(\check{q}_m-2)\tilde{q}_m}{\tilde{q}_m-2}}}\in(0,1),$$
we let $\xi$ tend to $t$ (from the left) in (\ref{4.2.15}) and apply H\"older's inequality to deduce that\\
\begin{equation}\label{4.2.16}
|\|T_{\sigma}(t)\mathbf{u}_0-T_{\sigma}(t)\mathbf{v}_0\||_{_{\infty}}\,\leq\,
c_0\,e^{C't}t^{-\kappa}|\|\mathbf{u}_0-\mathbf{v}_0\||^{\gamma}_{_{r^-\wedge s^-}}\,\leq\,
C\,e^{C't}t^{-\kappa}|\|\mathbf{u}_0-\mathbf{v}_0\||^{\gamma}_{_{r(\cdot),s(\cdot)}},
\end{equation}\indent\\
for some constants $c_0=e^{-k_3}$,\,$C=c_0\,c_{_{\overline{\Omega}}}$ (for a constant $c_{_{\overline{\Omega}}}>0$ depending
on $\overline{\Omega}$, given by H\"older's inequality), and $C':=k_2>0$. This leads to the inequality
(\ref{4.2.12}) in the case when $\mathbf{u}_0,\,\mathbf{v}_0\in\mathbb{X\!}^{\,\infty}(\overline{\Omega})$.
To complete the proof, it remains to prove (\ref{4.2.12}) for
$\mathbf{u}_0,\,\mathbf{v}_0\in\mathbb{X\!}^{\,r(\cdot),s(\cdot)}(\overline{\Omega})$. If this is the case, since $\Omega$ is bounded,
we clearly have that $\mathbf{u}_0,\,\mathbf{v}_0\in\mathbb{X\!}^{\,(r\wedge s)(\cdot)}(\overline{\Omega})$. Then,
let $\{\mathbf{u}_{0,n}\},\,\{\mathbf{v}_{0,n}\}\subseteq\mathbb{X\!}^{\,\infty}(\overline{\Omega})$
be sequences converging to $\mathbf{u}_0$ and $\mathbf{v}_0$, respectively (in $\mathbb{X\!}^{\,(r\wedge s)(\cdot)}(\overline{\Omega})$),
and put $\mathbf{u}_n(t):=T_{\sigma}(t)\mathbf{u}_{0,n}$ and
$\mathbf{v}_n(t):=T_{\sigma}(t)\mathbf{v}_{0,n}$. By (\ref{4.2.16}) we clearly get that $\mathbf{u}_n(t)\stackrel{n\rightarrow\infty}
\longrightarrow \mathbf{u}(t)$ and $\mathbf{v}_n(t)\stackrel{n\rightarrow\infty}\longrightarrow\mathbf{v}(t)$ in
$\mathbb{X\!}^{\,\infty}(\overline{\Omega})$ for each $t>0$. Thus for every $t>0$ the sequence $\{\mathbf{u}_n(t)-\mathbf{v}_n(t)\}$
converges in $\mathbb{X\!}^{\,\infty}(\overline{\Omega})$, and moreover
$\mathbf{u}_n(t)-\mathbf{v}_n(t)\stackrel{n\rightarrow\infty}\longrightarrow\mathbf{u}(t)-\mathbf{v}(t)$
by uniqueness of the limit. Hence (\ref{4.2.12}) is valid for
$\mathbf{u}_0,\,\mathbf{v}_0\in\mathbb{X\!}^{\,r(\cdot),s(\cdot)}(\overline{\Omega})$, as desired.
\end{proof}

\indent \indent Note that if $\check{p}_m=\check{q}_m=2$, then from the proof of Lemma \ref{Lem03}, one sees that
the inequality (\ref{4.2.10}) is valid for $\mathfrak{P}_{p,q}(t)=0$, and the procedure in the proof of Theorem \ref{MainT1} may
not be completely valid for this case. However, in this case we can establish an even sharper ultracontractivity property for
$\left\{T_{\sigma}(t)\right\}_{t\geq0}$, namely, the Lipschitz-ultracontractivity property. The next result assures this fact.\\

\begin{theorem}\label{MainT2}
In addition to all the assumptions in Theorem \ref{MainT1}, suppose that $\check{p}_m=\check{q}_m=2$,
and let $\left\{T_{\sigma}(t)\right\}_{t\geq0}$ be the submarkovian
$C_0$-semigroup on $\mathbb{X\!}^{\,2}(\overline{\Omega})$ generated by $\partial\Phi_{\sigma}$.
If $(r,s)\in\mathcal{P}(\Omega)\times\mathcal{P}(\Gamma)$ fulfills $2\leq r^-\leq r^+\leq\infty$ and
$2\leq s^-\leq s^+\leq\infty$, then there exist positive constants $C_0,\,C'_0,\,\kappa$ such that\\
\begin{equation}\label{4.2.17}
|\|T_{\sigma}(t)\mathbf{u}_0-T_{\sigma}(t)\mathbf{v}_0\||_{_{\infty}}\,\leq\,
C_0\,e^{C'_0}t^{-\kappa}|\|\mathbf{u}_0-\mathbf{v}_0\||_{_{r(\cdot),s(\cdot)}},
\end{equation}\indent\\
for every $\mathbf{u}_0,\,\mathbf{v}_0\in\mathbb{X\!}^{\,r(\cdot),s(\cdot)}(\overline{\Omega})$ and for all $t>0$, where
$$\kappa:=\displaystyle\frac{\tilde{p}_m}{\tilde{p}_m-2}\log\left(\frac{(r^-\wedge s^-)+\tilde{p}_m-2}{(r^-\wedge s^-)}\right)+
\displaystyle\frac{\tilde{q}_m}{\tilde{q}_m-2}\log\left(\frac{(r^-\wedge s^-)+\tilde{q}_m-2}{(r^-\wedge s^-)}\right),$$
$$C'_0:=2\mathfrak{C}^{\ast}_p\kappa_p\,\tilde{p}_m^{^{p^+_M}}J_{5,p}+2\mathfrak{C}^{\ast}_q\kappa_q\,\tilde{q}_m^{^{q^+_M}}J_{5,q},
\indent\indent\indent\indent\indent$$
and
$$C_0:=c_{_{\overline{\Omega}}}\exp\left(\tilde{p}_m\left[\log\left(\frac{\mathfrak{C}^{\ast}_p\,\tilde{p}_m^{^{p^+_M-1}}}{C'_{\epsilon}}\right)
J_{1,p}+(p^+_M-1)(J_{3,p}-J_{2,p})+J_{4,p}\right]+\right.\indent\indent\indent\indent\indent\indent$$
$$\indent\indent\indent\indent\indent\indent\left.
+\tilde{q}_m\left[\log\left(\frac{\mathfrak{C}^{\ast}_q\,\tilde{q}_m^{^{q^+_M-1}}}{C''_{\epsilon}}\right)J_{1,q}
+(q^+_M-1)(J_{3,q}-J_{2,q})+J_{4,q}\right]\right),$$
for the constants $J_{j,p},\,J_{j,q}$ ($1\leq j\leq 5$) given by (\ref{A13})--(\ref{A22}).
\end{theorem}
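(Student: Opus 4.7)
The plan is to exploit the fact that under the additional hypothesis $\check{p}_m = \check{q}_m = 2$, the coefficient $\mathfrak{P}_{p,q}(t)$ of (\ref{P}) vanishes identically, since both summands carry a factor of $(\check{p}_m - 2)$ or $(\check{q}_m - 2)$ respectively. Consequently, the differential inequality (\ref{4.2.10}) of Lemma \ref{Lem03} reduces to
\begin{equation*}
\frac{d}{dt}\log\left(|\|\mathbf{u}(t) - \mathbf{v}(t)\||_{r(t)}\right) \,\leq\, -\mathfrak{Q}_{p,q}(t),
\end{equation*}
which integrates directly, without any integrating factor. In the proof of Theorem \ref{MainT1}, the H\"older exponent $\gamma \in (0,1)$ arose precisely from the surviving factor $\exp(-\int_0^t \mathfrak{P}_{p,q}(\xi)\,d\xi)$ in the solution formula (\ref{4.2.14}); since this factor is now identically $1$, one expects a Lipschitz bound with $\gamma = 1$.

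First I would take $\mathbf{u}_0, \mathbf{v}_0 \in \mathbb{X\!}^{\,\infty}(\overline{\Omega})$, put $\mathbf{u}(t) := T_\sigma(t)\mathbf{u}_0$ and $\mathbf{v}(t) := T_\sigma(t)\mathbf{v}_0$, fix $t > 0$, and choose the same auxiliary exponent $\theta(\xi) := \tfrac{t(r^- \wedge s^-)}{t - \xi}$ on $[0, t)$ as in Theorem \ref{MainT1}, satisfying $\theta(0) = r^- \wedge s^-$ and $\theta(\xi) \to \infty$ as $\xi \to t^-$. Integrating the simplified differential inequality on $[0, \xi]$ and exponentiating yields
\begin{equation*}
|\|\mathbf{u}(\xi) - \mathbf{v}(\xi)\||_{\theta(\xi)} \,\leq\, |\|\mathbf{u}_0 - \mathbf{v}_0\||_{r^- \wedge s^-}\, \exp\left(-\int_0^\xi \mathfrak{Q}_{p,q}(\tau)\, d\tau\right),
\end{equation*}
and the non-expansivity of $\{T_\sigma(t)\}$ on $\mathbb{X\!}^{\,\theta(\xi)}(\overline{\Omega})$ from Theorem \ref{A03}, combined with the semigroup property, gives $|\|\mathbf{u}(t) - \mathbf{v}(t)\||_{\theta(\xi)} \leq |\|\mathbf{u}(\xi) - \mathbf{v}(\xi)\||_{\theta(\xi)}$. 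Passing to the limit $\xi \to t^-$ then converts the left-hand side into $|\|\mathbf{u}(t) - \mathbf{v}(t)\||_\infty$.

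The main computational task is to evaluate $\int_0^t \mathfrak{Q}_{p,q}(\xi)\, d\xi$ starting from the explicit form (\ref{Q-rs}). After the substitution $\xi = tu$ the integrands become rationals in $u$ multiplied by logarithms of quantities affine in $u$, so all the pieces reduce to the ten elementary integrals $J_{j,p}, J_{j,q}$ ($1 \leq j \leq 5$) of Appendix A. Collecting the $\log t$ contributions, the higher-order factors $(p^+_M - 1)\log(t - \xi)$ and $-(p^+_M - 1)\log([(r^-\wedge s^-)+\tilde{p}_m-2]t - (\tilde{p}_m - 2)\xi)$ inside the logarithm of $\mathfrak{Q}_{p,q}$ cancel, while the remaining single logarithm $\log((r^-\wedge s^-)t - \xi)$ gets integrated against $\tilde{p}_m/\{[(r^-\wedge s^-)+\tilde{p}_m-2]t - (\tilde{p}_m-2)\xi\}$, producing exactly $\tfrac{\tilde{p}_m}{\tilde{p}_m - 2}\log\left(\tfrac{(r^-\wedge s^-)+\tilde{p}_m-2}{r^-\wedge s^-}\right)\log(t)$, and similarly for the $q$-half; this delivers the $-\kappa \log(t)$ term with $\kappa$ as in the statement. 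The quartic-type pieces $-2\mathfrak{C}^{\ast}_p\kappa_p(\cdots)^{p^+_M}$ and its $q$-counterpart integrate to a linear-in-$t$ contribution giving the exponential growth constant $C_0'$, while the purely $\xi$-independent residues exponentiate to the multiplicative constant $C_0$. H\"older's inequality (applicable because $\Omega$ is bounded) then upgrades the $\mathbb{X\!}^{\,r^- \wedge s^-}$ norm on the right to $|\|\cdot\||_{r(\cdot),s(\cdot)}$ at the expense of a multiplicative constant depending on $\overline{\Omega}$, which is absorbed into $C_0$.

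To conclude for general $\mathbf{u}_0, \mathbf{v}_0 \in \mathbb{X\!}^{\,r(\cdot),s(\cdot)}(\overline{\Omega})$ I would use the density argument from the closing of the proof of Theorem \ref{MainT1}: pick approximating sequences in $\mathbb{X\!}^{\,\infty}(\overline{\Omega})$ converging in $\mathbb{X\!}^{\,(r\wedge s)(\cdot)}(\overline{\Omega})$, apply the already-established inequality (\ref{4.2.17}) to the approximants to get $L^\infty$-convergence for every $t > 0$, and pass to the limit by uniqueness of the limit. The main obstacle will be the careful bookkeeping to identify the closed forms of $\kappa$, $C_0'$ and $C_0$ in terms of the $J_{j,p}, J_{j,q}$; fortunately the $\Omega$- and $\Gamma$-contributions decouple additively since the differential inequality in Lemma \ref{Lem03} is already split into an interior and a boundary part, so the two anisotropic computations can be carried out independently and simply summed.
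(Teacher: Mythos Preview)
Your proposal is correct and follows essentially the same approach as the paper: both arguments observe that $\check{p}_m=\check{q}_m=2$ forces $\mathfrak{P}_{p,q}\equiv 0$, integrate the resulting inequality $\mathbf{Y}'(t)\leq -\mathfrak{Q}_{p,q}(t)$ directly with the same choice $\theta(\xi)=\frac{(r^-\wedge s^-)t}{t-\xi}$, invoke non-expansivity, pass to the limit $\xi\to t^-$, and then use H\"older's inequality followed by the density argument from Theorem~\ref{MainT1}. Your identification of the constants via the substitution $\xi=tu$ and the integrals $J_{j,p},\,J_{j,q}$ is exactly what the paper carries out in Proposition~A1.
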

\indent\\

\begin{proof}
Under the assumptions in the theorem, by the last argument in the proof of Theorem \ref{MainT1}, it
suffices to prove (\ref{4.2.17}) for $\mathbf{u}_0,\,\mathbf{v}_0\in\mathbb{X\!}^{\,\infty}(\overline{\Omega})$. Because $\mathfrak{P}_{p,q}(t)=0$,
letting $$\mathbf{Y}(t):=\log\left(|\|\mathbf{u}(t)-\mathbf{v}(t)\||_{_{\theta(t)}}\right)\,\indent\textrm{and}\,\indent
\mathcal{L}(t):=\displaystyle\int^t_0\mathfrak{Q}_{p,q}(\tau)\,d\tau,$$ we get from (\ref{4.2.10}) that $\mathbf{Y}(t)$ fulfills the
differential inequality $$\frac{d}{dt}\left(\mathbf{Y}(t)+\mathcal{L}(t)\right)\,\leq\,0,\indent\textrm{for\, a.e.}\,\,\,t\geq0.$$
This entails that\\
\begin{equation}\label{4.2.18}
\mathbf{Y}(t)+\mathcal{L}(t)\,\leq\,\mathbf{Y}(0)+\mathcal{L}(0)=\mathbf{Y}(0),\indent\textrm{for\, a.e.}\,\,\,t\geq0.
\end{equation}
\indent\\
As in the proof of Theorem \ref{MainT1}, let $(r,s)\in\mathcal{P}(\Omega)\times\mathcal{P}(\Omega)$ be such that
$2\leq r^-\leq r^+<\infty$ and $2\leq s^-\leq s^+<\infty$, fix $t>0$, and set
$\theta(\xi):=\frac{(r^-\wedge s^-)t}{t-\xi}$ over $[0,t)$. Then proceeding in a similar way as before (see also Proposition A1), we deduce that\\[2ex]
$\displaystyle\lim_{\xi\rightarrow t^{-}}\mathcal{L}(\xi)=\left\{
\displaystyle\frac{\tilde{p}_m}{\tilde{p}_m-2}\log\left(\frac{(r^-\wedge s^-)+\tilde{p}_m-2}{(r^-\wedge s^-)}\right)+
\displaystyle\frac{\tilde{q}_m}{\tilde{q}_m-2}\log\left(\frac{(r^-\wedge s^-)+\tilde{q}_m-2}{(r^-\wedge s^-)}\right)\right\}\log(t)+$\\
$$-\left(2\mathfrak{C}^{\ast}_p\kappa_p\,\tilde{p}_m^{^{p^+_M}}J_{5,p}+2\mathfrak{C}^{\ast}_q\kappa_q\,\tilde{q}_m^{^{q^+_M}}J_{5,q}\right)t+\left\{
\tilde{p}_m\left(\log\left(\frac{\mathfrak{C}^{\ast}_p\,\tilde{p}_m^{^{p^+_M-1}}}{C'_{\epsilon}}\right)J_{1,p}+(p^+_M-1)(J_{3,p}-J_{2,p})+J_{4,p}\right)+\right.$$
\begin{equation}\label{lim3}
\indent\indent\indent\indent\indent\left.
+\tilde{q}_m\left(\log\left(\frac{\mathfrak{C}^{\ast}_q\,\tilde{q}_m^{^{q^+_M-1}}}{C''_{\epsilon}}\right)J_{1,q}+(q^+_M-1)(J_{3,q}-J_{2,q})+J_{4,q}\right)\right\},
\end{equation}
where the constants $J_{j,p},\,J_{j,q}$ are given by (\ref{A13})--(\ref{A22}) ($1\leq j\leq 5$).
Following the approach as in the proof of the previous theorem, we deduce from (\ref{4.2.18}) that\\
\begin{equation}\label{4.2.19}
|\|\mathbf{u}(t)-\mathbf{v}(t)\||_{_{\theta(\xi)}}\,\leq\,e^{-\mathcal{L}(\xi)}e^{\mathbf{Y}(0)},\,\,\,\indent\textrm{for every}\,\,\,\xi\in[0,t).
\end{equation}\indent\\
Letting $C_0:=e^{k_6}\,c_{_{\overline{\Omega}}}$,\,\,$C'_0:=-k_5$ (for $k_5,\,k_6$ given by (\ref{k5}) and (\ref{k6}), respectively), and
$$\kappa:=\displaystyle\frac{\tilde{p}_m}{\tilde{p}_m-2}\log\left(\frac{(r^-\wedge s^-)+\tilde{p}_m-2}{(r^-\wedge s^-)}\right)+
\displaystyle\frac{\tilde{q}_m}{\tilde{q}_m-2}\log\left(\frac{(r^-\wedge s^-)+\tilde{q}_m-2}{(r^-\wedge s^-)}\right),$$
we let $\xi$ tend to $t$ from the left in (\ref{4.2.19}), and then apply
H\"older's inequality to achieve (\ref{4.2.17}), as desired.
\end{proof}
\indent\\
\begin{remark}
The definition of $\mathfrak{P}_{p,q}(t)$
in (\ref{P}) allows the possibility to consider the existence of cases in which either $p^-_m\in(1,2)$, or
$q^-_m\in(1,2)$ (but not both at the same time), and $\mathfrak{P}_{p,q}(t)$ remaining nonnegative. This may be the case for sufficient high dimension of the
space ($N>2$ sufficiently large), in which case if $p^-_m\in(1,2)$, then $q^-_m$ should be sufficiently large. The same if the opposite condition holds.
We will not add the details about this here.\\
\end{remark}

\begin{center}
\textsc{Appendix A: Derivation of the explicit constants in (\ref{lim2}) and (\ref{lim3})}
\end{center}
\indent\\

This part is devoted in establishing the exact values of the constants in (\ref{lim2}) and (\ref{lim3}). More precisely,
we will establish the following result.\\

\noindent{\bf Proposition A1.} {\it If $\mathfrak{P}_{p,q}(\cdot)$ and $\mathfrak{Q}_{p,q}(\cdot)$ are the functions given by (\ref{P-rs}) and
(\ref{Q-rs}), respectively, then we have that
\begin{apequation}\label{A01}
\displaystyle\lim_{\xi\rightarrow t^{-}}\int^{\xi}_0\mathfrak{Q}_{p,q}(\tau)\,\exp\left(\int^{\tau}_0
\mathfrak{P}_{p,q}(l)\,dl\right)\,d\tau=k_1\log(t)-k_2t+k_3,
\end{apequation}
and
\begin{apequation}\label{A02}
\displaystyle\lim_{\xi\rightarrow t^{-}}\displaystyle\int^{\xi}_0\mathfrak{Q}_{p,q}(\tau)\,d\tau=k_4\log(t)-k_5t+k_6,
\end{apequation}
where the constants $k_m$ ($1\leq m\leq 6$) are given explicitly as follows:}
\begin{apequation}\label{k1}
k_1:=\frac{\tilde{p}_m}{(r^-\wedge s^-)+\tilde{p}_m-2}I_{1,p}+\frac{\tilde{q}_m}{(r^-\wedge s^-)+\tilde{q}_m-2}I_{1,q},
\end{apequation}
\begin{apequation}\label{k2}
k_2:=\frac{2\mathfrak{C}^{\ast}_p\kappa_p\,\tilde{p}^{^{p_{M}^{+}}}_m}{(r^-\wedge s^-)+\tilde{p}_m-2}I_{5,p}+
\frac{2\mathfrak{C}^{\ast}_q\kappa_q\,\tilde{q}^{^{q_{M}^{+}}}_m}{(r^-\wedge s^-)+\tilde{q}_m-2}I_{5,q},
\end{apequation}
\indent\\
\indent\indent$k_3:=\displaystyle\frac{\tilde{p}_m}{(r^-\wedge s^-)+\tilde{p}_m-2}\left(\log\left(\frac{\mathfrak{C}^{\ast}_p\,\tilde{p}^{^{p_{M}^{+}-1}}_m}
{C'_{\epsilon}}\right)I_{1,p}+(p_{M}^{+}-1)(I_{2,p}-I_{3,p})+I_{4,p}\right)+$\\
\begin{apequation}\label{k3}
\indent\indent\indent+\displaystyle\frac{\tilde{q}_m}{(r^-\wedge s^-)+\tilde{q}_m-2}\left(\log\left(\frac{\mathfrak{C}^{\ast}_q\,\tilde{q}^{^{q_{M}^{+}-1}}}
{C''_{\epsilon}}\right)I_{1,q}+(q_{M}^{+}-1)(I_{2,q}-I_{3,q})+I_{4,q}\right),
\end{apequation}
\begin{apequation}\label{k4}
k_4:=\displaystyle\frac{\tilde{p}_m}{\tilde{p}_m-2}\log\left(\frac{(r^-\wedge s^-)+\tilde{p}_m-2}{(r^-\wedge s^-)}\right)+
\displaystyle\frac{\tilde{q}_m}{\tilde{q}_m-2}\log\left(\frac{(r^-\wedge s^-)+\tilde{q}_m-2}{(r^-\wedge s^-)}\right),
\end{apequation}
\begin{apequation}\label{k5}
k_5:=2\mathfrak{C}^{\ast}_p\kappa_p\,\tilde{p}_m^{^{p^+_M}}J_{5,p}+2\mathfrak{C}^{\ast}_q\kappa_q\,\tilde{q}_m^{^{q^+_M}}J_{5,q},
\indent\indent\indent\indent\indent\indent\indent\indent\indent
\end{apequation}
and\\[2ex]
\indent$k_6:=\tilde{p}_m\left[\log\left(\frac{\mathfrak{C}^{\ast}_p\,\tilde{p}_m^{^{p^+_M-1}}}{C'_{\epsilon}}\right)
J_{1,p}+(p^+_M-1)(J_{3,p}-J_{2,p})+J_{4,p}\right]$\\
\begin{apequation}\label{k6}
\indent\indent\indent\indent\indent\indent
+\tilde{q}_m\left[\log\left(\frac{\mathfrak{C}^{\ast}_q\,\tilde{q}_m^{^{q^+_M-1}}}{C''_{\epsilon}}\right)J_{1,q}
+(q^+_M-1)(J_{3,q}-J_{2,q})+J_{4,q}\right],
\end{apequation}
{\it for the constants $I_{i,p},\,I_{j,q}$ ($1\leq i,\,j\leq 5$) given by (\ref{A03})--(\ref{A12}), and the constants
$J_{j,p},\,J_{j,q}$ ($1\leq j\leq 5$) given by (\ref{A13})--(\ref{A22})}.
\indent\\

\begin{proof}
To simplify the notation, we write\, $a:=r^-\wedge s^-$, \, $b:=\check{p}_{m}$, \, $c:=\check{q}_{m}$, \, $d_1:=p_{M}^{+}$,  \, $d_2:=q_{M}^{+}$,
\, $p:=\tilde{p}_{m}$, and\, $q:=\tilde{q}_{m}$. Then

\begin{center}
\noindent$\displaystyle\int_{0}^{t} \mathfrak{Q}_{p,q}(\tau) \ \exp\left(\int_{0}^{\tau} \mathfrak{P}_{p,q}(l) \ dl\right) d\tau =p \int_{0}^{t} \frac{1}{[a+p-2]t-(p-2)\tau} \ \log\left(\frac{C_{p}^{*}p^{d_1-1} [at-\tau] (t-\tau)^{d_1-1}}{C'_{\epsilon}[(a+p-2)t-(p-2) \tau ]^{d_1-1}} \right) $ \\

\vspace{0.2cm}

$\times\left(\displaystyle\frac{(a+p-2)t-(p-2)\tau}{[a+p-2]t}\right)^{\frac{(b-2)p}{p-2}} \ \left(\displaystyle\frac{(a+q-2)t-(q-2)\tau}{[a+q-2]t}\right)^{\frac{(c-2)q}{q-2}} \ d\tau$ \\

\vspace{0.2cm}

$-2C_{p}^{*} \kappa_{p} \displaystyle\int_{0}^{t} \left(\frac{p (at-\tau)^{\frac{1}{d_1}} (t-\tau)^{\frac{d_1-1}{d_1}}}{(a+p-2)t-(p-2)\tau}\right)^{d_1} \
\left(\displaystyle\frac{(a+p-2)t-(p-2)\tau}{[a+p-2]t}\right)^{\frac{(b-2)p}{p-2}} \left(\displaystyle\frac{(a+q-2)t-(q-2)\tau}{[a+q-2]t}\right)^{\frac{(c-2)q}{q-2}} \ d\tau$ \\

\vspace{0.2cm}

$+q  \displaystyle\int_{0}^{t} \frac{1}{[a+q-2]t-(q-2)\tau} \ \log\left(\frac{C_{q}^{*}q^{d_2-1}(at-\tau)(t-\tau)^{d_2-1}}{C''_{\epsilon}[(a+q-2)t-(q-2)
\tau ]^{d_2-1}} \right)\indent\indent\indent\indent\indent$ \\

\vspace{0.2cm}

$\times\left(\displaystyle\frac{(a+p-2)t-(p-2)\tau}{[a+p-2]t}\right)^{\frac{(b-2)p}{p-2}} \ \left(\displaystyle\frac{(a+q-2)t-(q-2)\tau}{[a+q-2]t}\right)^{\frac{(c-2)q}{q-2}} \ d\tau$ \\

\vspace{0.2cm}

$-2C_{q}^{*} \kappa_{q} \displaystyle\int_{0}^{t} \left(\frac{q(at-\tau)^{\frac{1}{d_2}}(t-\tau)^{\frac{d_2-1}{d_2}}}{(a+q-2)t-(q-2)\tau}\right)^{d_2} \
\left(\displaystyle\frac{(a+p-2)t-(p-2)\tau}{[a+p-2]t}\right)^{\frac{(b-2)p}{p-2}} \left(\displaystyle\frac{(a+q-2)t-(q-2)\tau}{[a+q-2]t}\right)^{\frac{(c-2)q}{q-2}} \ d\tau$

\end{center}

\noindent Making a standard substitution $x=\tau/t$ (for $t>0$), we find that

\vspace{0.2cm}

$p \displaystyle\int_{0}^{t} \frac{1}{[a+p-2]t-(p-2)\tau} \ \log\left(\frac{C_{p}^{*}p^{d_1-1}(at-\tau) (t-\tau)^{d_1-1}}{C'_{\epsilon}[(a+p-2)t-(p-2) \tau ]^{d_1-1}} \right) \left(\displaystyle\frac{(a+p-2)t-(p-2)\tau}{[a+p-2]t}\right)^{\frac{(b-2)p}{p-2}} $

\vspace{0.2cm}

\begin{center}
$\times\left(\displaystyle\frac{(a+q-2)t-(q-2)\tau}{[a+q-2]t}\right)^{\frac{(c-2)q}{q-2}} \ d\tau$
\end{center}

$=p \displaystyle\int_{0}^{1} \frac{t}{[a+p-2]t-(p-2)tx} \ \log\left(\frac{C_{p}^{*}p^{d_1-1}(at-tx)(t-tx)^{d_1-1}}{C'_{\epsilon}[(a+p-2)t-(p-2) tx ]^{d_1-1}} \right) \left(\displaystyle\frac{(a+p-2)t-(p-2)tx}{[a+p-2]t}\right)^{\frac{(b-2)p}{p-2}}$

\vspace{0.2cm}

\begin{center}
s$\times \left(\displaystyle\frac{(a+q-2)t-(q-2)tx}{[a+q-2]t}\right)^{\frac{(c-2)q}{q-2}} \ dx$
\end{center}

$=p \displaystyle\int_{0}^{1} \frac{1}{[a+p-2]-(p-2)x} \ \log\left(\frac{C_{p}^{*}p^{d_1-1} t (a-x) (1-x)^{d_1-1}}{C'_{\epsilon}[(a+p-2)-(p-2) x ]^{d_1-1}} \right)\left(\displaystyle\frac{(a+p-2)-(p-2)x}{[a+p-2]}\right)^{\frac{(b-2)p}{p-2}} $

\vspace{0.2cm}

\begin{center}
$\times \left(\displaystyle\frac{(a+q-2)-(q-2)x}{[a+q-2]}\right)^{\frac{(c-2)q}{q-2}} \ dx$
\end{center}

$=\displaystyle\frac{p}{a+p-2} \int_{0}^{1} \ \left(\displaystyle 1-\frac{(p-2)x}{[a+p-2]}\right)^{\frac{(b-2)p}{p-2}-1} \ \log\left(\frac{C_{p}^{*}p^{d_1-1}t (a-x) (1-x)^{d_1-1}}{ C'_{\epsilon}[a+(p-2)(1-x) ]^{d_1-1}} \right) \left(1-\displaystyle\frac{(q-2)x}{[a+q-2]}\right)^{\frac{(c-2)q}{q-2}} \ dx.$ \\

\vspace{0.2cm}

Also, \\

\vspace{0.2cm}

$-C_{p}^{*} \kappa_{p} \displaystyle\int_{0}^{t} \left(\frac{p(at-\tau)^{\frac{1}{d_1}}(t-\tau)^{\frac{d_1-1}{d_1}}}{(a+p-2)t-(p-2)\tau}\right)^{d_1} \
\left(\displaystyle\frac{(a+p-2)t-(p-2)\tau}{[a+p-2]t}\right)^{\frac{(b-2)p}{p-2}} \left(\displaystyle\frac{(a+q-2)t-(q-2)\tau}{[a+q-2]t}\right)^{\frac{(c-2)q}{q-2}} \ d\tau$ \\

\vspace{0.2cm}

$=- t \ C_{p}^{*} \ \kappa_{p} \displaystyle\int_{0}^{1} \left(\frac{p(at-tx)^{\frac{1}{d_1}}(t-tx)^{\frac{d_1-1}{d_1}}}{(a+p-2)t-(p-2)tx}\right)^{d_1} \left(\displaystyle\frac{(a+p-2)t-(p-2)tx}{[a+p-2]t}\right)^{\frac{(b-2)p}{p-2}} \left(\displaystyle\frac{(a+q-2)t-(q-2)tx}{[a+q-2]t}\right)^{\frac{(c-2)q}{q-2}} dx$ \\

\vspace{0.2cm}
$=- t  \ C_{p}^{*} \ \kappa_{p} \ p^{d_1} \displaystyle\int_{0}^{1} \left(\frac{(a-x)^{\frac{1}{d_1}}(1-x)^{\frac{d_1-1}{d_1}}}{(a+p-2)-(p-2)x}\right)^{d_1} \left(\displaystyle\frac{(a+p-2)-(p-2)x}{[a+p-2]}\right)^{\frac{(b-2)p}{p-2}} \left(\displaystyle\frac{(a+q-2)-(q-2)x}{[a+q-2]}\right)^{\frac{(c-2)q}{q-2}} dx$ \\

\vspace{0.2cm}

$=-\displaystyle\frac{\ C_{p}^{*} \ \kappa_{p} \ p^{d_1}}{(a+p-2)^{d_1}} \ t \ \displaystyle\int_{0}^{1} (a-x) (1-x)^{d_1-1} \left(1-\displaystyle\frac{(p-2)x}{[a+p-2]}\right)^{\frac{(b-2)p}{p-2}-d_1} \left(1-\displaystyle\frac{(q-2)x}{[a+q-2]}\right)^{\frac{(c-2)q}{q-2}} dx$. \\

\vspace{0.2cm}

Proceeding in the exact way, we have that \\

\vspace{0.2cm}

$q \displaystyle\int_{0}^{t} \frac{1}{[a+q-2]t-(q-2)\tau} \ \log\left(\frac{C_{q}^{*}q^{d_2-1}(at-\tau) (t-\tau)^{d_2-1}}{C'_{\epsilon}[(a+q-2)t-(q-2) \tau ]^{d_2-1}} \right) \left(\displaystyle\frac{(a+p-2)t-(p-2)\tau}{[a+p-2]t}\right)^{\frac{(b-2)p}{p-2}} $

\vspace{0.2cm}

\begin{center}
$\times\left(\displaystyle\frac{(a+q-2)t-(q-2)\tau}{[a+q-2]t}\right)^{\frac{(c-2)q}{q-2}} \ d\tau$
\end{center}

\vspace{0.2cm}

$=\displaystyle\frac{q}{a+q-2} \int_{0}^{1} \ \left(\displaystyle 1-\frac{(q-2)x}{[a+q-2]}\right)^{\frac{(c-2)q}{q-2}-1} \ \log\left(\frac{C_{q}^{*}q^{d_2-1} t (a-x)(1-x)^{d_2-1}}{ C'_{\epsilon}[a+(q-2)(1-x) ]^{d_2-1}} \right) \left(1-\displaystyle\frac{(p-2)x}{[a+p-2]}\right)^{\frac{(b-2)p}{p-2}} \ dx$,

\vspace{0.2cm}

and\\

\vspace{0.2cm}

$-C_{q}^{*} \kappa_{q} \displaystyle\int_{0}^{t} \left(\frac{q(at-\tau)^{\frac{1}{d_2}}(t-\tau)^{\frac{d_2-1}{d_2}}}{(a+q-2)t-(q-2)\tau}\right)^{d_2} \
\left(\displaystyle\frac{(a+p-2)t-(p-2)\tau}{[a+p-2]t}\right)^{\frac{(b-2)p}{p-2}} \left(\displaystyle\frac{(a+q-2)t-(q-2)\tau}{[a+q-2]t}\right)^{\frac{(c-2)q}{q-2}} \ d\tau$ \\

\vspace{0.2cm}

\begin{center}
$=-\displaystyle\frac{\ C_{q}^{*} \ \kappa_{q} \ q^{d_2}}{(a+q-2)^{d_2}} \ t \ \displaystyle\int_{0}^{1} (a-x) (1-x)^{d_2-1} \left(1-\displaystyle\frac{(p-2)x}{[a+p-2]}\right)^{\frac{(b-2)p}{p-2}} \left(1-\displaystyle\frac{(q-2)x}{[a+q-2]}\right)^{\frac{(c-2)q}{q-2}-d_2} dx$.\\
\end{center}

Now, observe that\\

\begin{center}
$\displaystyle\frac{p}{a+p-2} \int_{0}^{1} \ \left(\displaystyle 1-\frac{(p-2)x}{[a+p-2]}\right)^{\frac{(b-2)p}{p-2}-1} \ \log\left(\frac{C_{p}^{*}p^{d_1-1} t (a-x) (1-x)^{d_1-1}}{ C'_{\epsilon}[a+(p-2)(1-x) ]^{d_1-1}} \right) \left(1-\displaystyle\frac{(q-2)x}{[a+q-2]}\right)^{\frac{(c-2)q}{q-2}} \ dx$
\end{center}

\vspace{0.2cm}

$=\displaystyle\frac{p}{a+p-2}\log\left(\frac{C_{p}^{*}p^{d_1-1}t}{ C'_{\epsilon}}\right)\displaystyle\int_{0}^{1}\left(\displaystyle 1-\frac{(p-2)x}{[a+p-2]}\right)^{\frac{(b-2)p}{p-2}-1} \left(1-\displaystyle\frac{(q-2)x}{[a+q-2]}\right)^{\frac{(c-2)q}{q-2}} \ dx$

\vspace{0.2cm}

\begin{center}
$+\displaystyle\frac{p(d_1-1)}{a+p-2}\displaystyle\int_{0}^{1} \left(\displaystyle 1-\frac{(p-2)x}{[a+p-2]}\right)^{\frac{(b-2)p}{p-2}-1} \left(1-\displaystyle\frac{(q-2)x}{[a+q-2]}\right)^{\frac{(c-2)q}{q-2}} \ \log(1-x) \ dx$
\end{center}

\vspace{0.2cm}

$-\displaystyle\frac{(d_1-1)p}{a+p-2}\displaystyle\int_{0}^{1} \left(\displaystyle 1-\frac{(p-2)x}{[a+p-2]}\right)^{\frac{(b-2)p}{p-2}-1} \left(1-\displaystyle\frac{(q-2)x}{[a+q-2]}\right)^{\frac{(c-2)q}{q-2}} \ \log[a+(p-2)(1-x)] \ dx$.

\vspace{0.2cm}

\begin{center}
$+\displaystyle\frac{p}{a+p-2}\displaystyle\int_{0}^{1} \left(\displaystyle 1-\frac{(p-2)x}{[a+p-2]}\right)^{\frac{(b-2)p}{p-2}-1} \left(1-\displaystyle\frac{(q-2)x}{[a+q-2]}\right)^{\frac{(c-2)q}{q-2}} \ \log(a-x) \ dx$
\end{center}

\vspace{0.5cm}

In the exact way, we entail that

\begin{center}
$\displaystyle\frac{q}{a+q-2} \int_{0}^{1} \ \left(\displaystyle 1-\frac{(p-2)x}{[a+p-2]}\right)^{\frac{(b-2)p}{p-2}} \ \log\left(\frac{C_{q}^{*}q^{d_2-1} t (a-x) (1-x)^{d_2-1}}{ C''_{\epsilon}[a+(q-2)(1-x) ]^{d_2-1}} \right) \left(1-\displaystyle\frac{(q-2)x}{[a+q-2]}\right)^{\frac{(c-2)q}{q-2}-1} \ dx$
\end{center}

\vspace{0.2cm}

$=\displaystyle\frac{q}{a+q-2}\log\left(\frac{C_{q}^{*}q^{d_2-1}t}{ C''_{\epsilon}}\right)\displaystyle\int_{0}^{1}\left(\displaystyle 1-\frac{(q-2)x}{[a+q-2]}\right)^{\frac{(b-2)p}{p-2}} \left(1-\displaystyle\frac{(q-2)x}{[a+q-2]}\right)^{\frac{(c-2)q}{q-2}-1} \ dx$

\vspace{0.2cm}

\begin{center}
$+\displaystyle\frac{q(d_2-1)}{a+p-2}\displaystyle\int_{0}^{1} \left(\displaystyle 1-\frac{(p-2)x}{[a+p-2]}\right)^{\frac{(b-2)p}{p-2}} \left(1-\displaystyle\frac{(q-2)x}{[a+q-2]}\right)^{\frac{(c-2)q}{q-2}-1} \ \log(1-x) \ dx$
\end{center}

\vspace{0.2cm}

$-\displaystyle\frac{(d_2-1)q}{a+q-2}\displaystyle\int_{0}^{1} \left(\displaystyle 1-\frac{(p-2)x}{[a+p-2]}\right)^{\frac{(b-2)p}{p-2}} \left(1-\displaystyle\frac{(q-2)x}{[a+q-2]}\right)^{\frac{(c-2)q}{q-2}-1} \ \log[a+(q-2)(1-x)] \ dx$.

\vspace{0.2cm}

\begin{center}
$+\displaystyle\frac{q}{a+q-2}\displaystyle\int_{0}^{1} \left(\displaystyle 1-\frac{(p-2)x}{[a+p-2]}\right)^{\frac{(b-2)p}{p-2}} \left(1-\displaystyle\frac{(q-2)x}{[a+q-2]}\right)^{\frac{(c-2)q}{q-2}-1} \ \log(a-x) \ dx$
\end{center}

\vspace{0.5cm}

Then, we denote the following:

\begin{apequation}\label{A03}
I_{1,p}:=\displaystyle\int_{0}^{1}\left(\displaystyle 1-\frac{(p-2)x}{[a+p-2]}\right)^{\frac{(b-2)p}{p-2}-1} \left(1-\displaystyle\frac{(q-2)x}{[a+q-2]}\right)^{\frac{(c-2)q}{q-2}} \ dx,
\end{apequation}

\begin{apequation}\label{A04}
I_{2,p}:=\displaystyle\int_{0}^{1} \left(\displaystyle 1-\frac{(p-2)x}{[a+p-2]}\right)^{\frac{(b-2)p}{p-2}-1} \left(1-\displaystyle\frac{(q-2)x}{[a+q-2]}\right)^{\frac{(c-2)q}{q-2}} \ \log(1-x) \ dx,
\end{apequation}

\begin{apequation}\label{A05}
I_{3,p}:=\displaystyle\int_{0}^{1} \left(\displaystyle 1-\frac{(p-2)x}{[a+p-2]}\right)^{\frac{(b-2)p}{p-2}-1} \left(1-\displaystyle\frac{(q-2)x}{[a+q-2]}\right)^{\frac{(c-2)q}{q-2}} \ \log[a+(p-2)(1-x)] \ dx,
\end{apequation}

\begin{apequation}\label{A06}
I_{4,p}:=\displaystyle\int_{0}^{1} \left(\displaystyle 1-\frac{(p-2)x}{[a+p-2]}\right)^{\frac{(b-2)p}{p-2}-1} \left(1-\displaystyle\frac{(q-2)x}{[a+q-2]}\right)^{\frac{(c-2)q}{q-2}} \ \log(a-x) \ dx,
\end{apequation}

\begin{apequation}\label{A07}
I_{5,p}:=\displaystyle\int_{0}^{1} (a-x) (1-x)^{d_1-1} \left(1-\displaystyle\frac{(p-2)x}{[a+p-2]}\right)^{\frac{(b-2)p}{p-2}-d_1} \left(1-\displaystyle\frac{(q-2)x}{[a+q-2]}\right)^{\frac{(c-2)q}{q-2}} dx,
\end{apequation}

\begin{apequation}\label{A08}
I_{1,q}:=\displaystyle\int_{0}^{1}\left(\displaystyle 1-\frac{(p-2)x}{[a+p-2]}\right)^{\frac{(b-2)p}{p-2}} \left(1-\displaystyle\frac{(q-2)x}{[a+q-2]}\right)^{\frac{(c-2)q}{q-2}-1} \ dx,
\end{apequation}

\begin{apequation}\label{A09}
I_{2,q}:=\displaystyle\int_{0}^{1} \left(\displaystyle 1-\frac{(p-2)x}{[a+p-2]}\right)^{\frac{(b-2)p}{p-2}} \left(1-\displaystyle\frac{(q-2)x}{[a+q-2]}\right)^{\frac{(c-2)q}{q-2}-1} \ \log(1-x) \ dx,
\end{apequation}

\begin{apequation}\label{A10}
I_{3,q}:=\displaystyle\int_{0}^{1} \left(\displaystyle 1-\frac{(p-2)x}{[a+p-2]}\right)^{\frac{(b-2)p}{p-2}} \left(1-\displaystyle\frac{(q-2)x}{[a+q-2]}\right)^{\frac{(c-2)q}{q-2}-1} \ \log[a+(q-2)(1-x)] \ dx,
\end{apequation}

\begin{apequation}\label{A11}
I_{4,q}:=\displaystyle\int_{0}^{1} \left(\displaystyle 1-\frac{(p-2)x}{[a+p-2]}\right)^{\frac{(b-2)p}{p-2}} \left(1-\displaystyle\frac{(q-2)x}{[a+q-2]}\right)^{\frac{(c-2)q}{q-2}-1} \ \log(a-x) \ dx,
\end{apequation}

\begin{apequation}\label{A12}
I_{5,q}:=\displaystyle\int_{0}^{1} (a-x) (1-x)^{d_2-1} \left(1-\displaystyle\frac{(q-2)x}{[a+q-2]}\right)^{\frac{(c-2)q}{q-2}-d_2} \left(1-\displaystyle\frac{(p-2)x}{[a+p-2]}\right)^{\frac{(b-2)p}{p-2}} dx.
\end{apequation}

\noindent Standard arguments show that all these integrals yield positive constants; in particular, the integrals $I_{2,p},\,I_{2,q}$ are convergent.
Therefore, by letting
$$k_1:=\frac{p}{a+p-2}I_{1,p}+\frac{q}{a+q-2}I_{1,q},\indent\indent\,\,\,\,\,k_2:=-2\left(\frac{\mathfrak{C}^{\ast}_p\kappa_p\,p^{d_1}}{a+p-2}I_{5,p}+
\frac{\mathfrak{C}^{\ast}_q\kappa_q\,q^{d_2}}{a+q-2}I_{5,q}\right),$$
and
$$k_3:=\frac{p}{a+p-2}\left(\log\left(\frac{\mathfrak{C}^{\ast}_p\,p^{d_1-1}}{C'_{\epsilon}}\right)I_{1,p}+(d_1-1)(I_{2,p}-I_{3,p})+I_{4,p}\right)+
\indent\indent\indent\indent$$ $$\indent\indent\indent
+\frac{q}{a+q-2}\left(\log\left(\frac{\mathfrak{C}^{\ast}_q\,q^{d_2-1}}{C''_{\epsilon}}\right)I_{1,q}+(d_2-1)(I_{2,q}-I_{3,q})+I_{4,q}\right),$$
we are lead into the fulfillment of the equality (\ref{A01}) for the given explicit constants. It remains to do the same for (\ref{A02}),
which turns out to be an easier calculation. In fact, proceeding as before, we have:

\vspace{0.2cm}

\noindent$\displaystyle\int_{0}^{t} \mathfrak{D}_{p,q}(\tau) \ d\tau$\\
$$=\displaystyle\int_{0}^{t} \frac{p}{[a+p-2]t-(p-2)\tau} \ \log\left(\frac{C_{p}^{*} p^{d_1-1} (at- \tau) (t-\tau)^{d_1-1}}{C'_{\epsilon}
\left\{[a+(p-2)]t-(p-2)\tau \right\}^{d_1-1}}\right)\ d\tau-2C_{p}^{*} \kappa_{p} \displaystyle\int_{0}^{t} \left(\frac{p (at-\tau)^{\frac{1}{d_1}} (t-\tau)^{\frac{d_1-1}{d_1}}}{(a+p-2)t-(p-2)\tau}\right)^{d_1} d\tau$$
$$+\displaystyle\int_{0}^{t} \frac{q}{[a+q-2]t-(q-2)\tau} \ \log\left(\frac{C_{q}^{*} q^{d_2-1} (at- \tau) (t-\tau)^{d_2-1}}{C''_{\epsilon} \left\{[a+(q-2)]t-(q-2)\tau \right\}^{d_2-1}}\right) \ d\tau-2C_{q}^{*} \kappa_{q} \displaystyle\int_{0}^{t} \left(\frac{q (at-\tau)^{\frac{1}{d_2}} (t-\tau)^{\frac{d_2-1}{d_2}}}{(a+q-2)t-(q-2)\tau}\right)^{d_2} d\tau$$
$$=p \log\left(\displaystyle\frac{t C_{p}^{*} p^{d_1-1}}{ C'_{\epsilon}}\right)\displaystyle\int_{0}^{1} \frac{1}{[a+(p-2)(1-x)]} \ dx +
p\displaystyle\int_{0}^{1}  \frac{1}{[a+(p-2)(1-x)]} \ \log(a-x) \ dx\indent\indent\indent\indent\indent\indent\indent\indent\indent\indent$$
$$+p (d_1-1)\displaystyle\int_{0}^{1}  \frac{1}{[a+(p-2)(1-x)]} \ \log(1-x) \ dx
-p (d_1-1)\displaystyle\int_{0}^{1} \frac{1}{[a+(p-2)(1-x)]} \ \log(a+(p-2)(1-x)) \ dx$$
$$\indent\indent\indent\indent\indent\indent\indent\indent\indent\indent\indent\indent\indent\indent\indent\indent\indent\indent\indent\indent
- 2t  \ C_{p}^{*} \ \kappa_{p} \ p^{d_1} \displaystyle\int_{0}^{1} \left(\frac{(a-x)^{\frac{1}{d_1}}(1-x)^{\frac{d_1-1}{d_1}}}{(a+p-2)-(p-2)x}\right)^{d_1}dx$$
$$+q \log\left(\displaystyle\frac{t C_{q}^{*} q^{d_2-1}}{ C''_{\epsilon}}\right)\displaystyle\int_{0}^{1} \frac{1}{[a+(q-2)(1-x)]} \ dx +
q\displaystyle\int_{0}^{1}  \frac{1}{[a+(q-2)(1-x)]} \ \log(a-x) \ dx\indent\indent\indent\indent\indent\indent\indent\indent\indent\indent$$
$$+q (d_2-1)\displaystyle\int_{0}^{1}  \frac{1}{[a+(q-2)(1-x)]} \ \log(1-x) \ dx
-q (d_2-1)\displaystyle\int_{0}^{1} \frac{1}{[a+(q-2)(1-x)]} \ \log(a+(q-2)(1-x)) \ dx$$
$$\indent\indent\indent\indent\indent\indent\indent\indent\indent\indent\indent\indent\indent\indent\indent\indent\indent\indent\indent\indent
- 2t  \ C_{q}^{*} \ \kappa_{q} \ q^{d_2} \displaystyle\int_{0}^{1} \left(\frac{(a-x)^{\frac{1}{d_2}}(1-x)^{\frac{d_2-1}{d_2}}}{(a+q-2)-(q-2)x}\right)^{d_2}dx.$$

Then we set\\

\begin{apequation}\label{A13}
J_{1,p}:=\displaystyle\int_{0}^{1}\frac{1}{[a+(p-2)(1-x)]} \ dx =-\frac{\log(a)}{p-2}+ \frac{\log(a+p-2)}{p-2},
\end{apequation}

\begin{apequation}\label{A14}
J_{2,p}:=\displaystyle\int_{0}^{1} \frac{\log(a+(p-2)(1-x))}{[a+(p-2)(1-x)]} \ dx=-\frac{(\log(a))^2}{2(p-2)}+ \frac{(\log(a+p-2))^2}{2(p-2)},
\end{apequation}

\begin{apequation}\label{A15-16}
J_{3,p}:=\displaystyle\int_{0}^{1} \frac{\log(1-x)}{a+(p-2)(1-x)} \ dx,
\indent\indent\,\,\,
J_{4,p}:=\displaystyle\int_{0}^{1} \frac{\log(a-x)}{a+(p-2)(1-x)} \ dx,
\end{apequation}
\indent\\

\noindent$J_{5,p}:=\displaystyle\int_{0}^{1} \left(\frac{(a-x)^{\frac{1}{d_1}}(1-x)^{\frac{d_1-1}{d_1}}}{(a+p-2)-(p-2)x}\right)^{d_1}dx$\\[2ex]
$=\displaystyle\frac{1}{ad_1(1+d_1)(a+p-2)^{d_1}}\left(a^2(1+d_1)\displaystyle\frac{\Gamma(1+d_1)}{\Gamma(d_1)}\int_{0}^{1} t^{d_1-1}
\left(1-\frac{p-2}{a+p-2}t\right)^{-1}dt\right.$\\
\begin{apequation}\label{A17}
\indent\indent\indent\left.+(ad_1+p-2)\displaystyle\frac{\Gamma(2+d_1)}{\Gamma(d_1)}\int_{0}^{1} t^{d_1-1}(1-t)
\left(1-\frac{p-2}{a+p-2}t\right)^{-1}dt-(1+d_1)(a+p-2)\right),
\end{apequation}

\begin{apequation}\label{A18}
J_{1,q}:=\displaystyle\int_{0}^{1}\frac{1}{[a+(q-2)(1-x)]} \ dx =-\frac{\log(a)}{q-2}+ \frac{\log(a+q-2)}{q-2},
\end{apequation}

\begin{apequation}\label{A19}
J_{2,q}:=\displaystyle\int_{0}^{1} \frac{\log(a+(q-2)(1-x))}{[a+(q-2)(1-x)]} \ dx=-\frac{(\log(q))^2}{2(q-2)}+ \frac{(\log(a+q-2))^2}{2(q-2)},
\end{apequation}

\begin{apequation}\label{A20-21}
J_{3,q}:=\displaystyle\int_{0}^{1} \frac{\log(1-x)}{a+(q-2)(1-x)} \ dx,
\indent\indent\,\,\,
J_{4,q}:=\displaystyle\int_{0}^{1} \frac{\log(a-x)}{a+(q-2)(1-x)} \ dx,
\end{apequation}
\indent\\

\noindent$J_{5,q}:=\displaystyle\int_{0}^{1} \left(\frac{(a-x)^{\frac{1}{d_2}}(1-x)^{\frac{d_2-1}{d_2}}}{(a+q-2)-(q-2)x}\right)^{d_2}dx$\\[2ex]
$=\displaystyle\frac{1}{ad_2(1+d_2)(a+q-2)^{d_2}}\left(a^2(1+d_2)\displaystyle\frac{\Gamma(1+d_2)}{\Gamma(d_2)}\int_{0}^{1} t^{d_2-1}
\left(1-\frac{q-2}{a+q-2}t\right)^{-1}dt\right.$\\
\begin{apequation}\label{A22}
\indent\indent\indent\left.+(ad_2+q-2)\displaystyle\frac{\Gamma(2+d_2)}{\Gamma(d_2)}\int_{0}^{1} t^{d_2-1}(1-t)
\left(1-\frac{q-2}{a+q-2}t\right)^{-1}dt-(1+d_2)(a+q-2)\right),
\end{apequation}

\noindent where $\Gamma(\cdot)$ denotes the well-known Gamma function. As before, letting

$$k_4:=pJ_{1,p}+qJ_{1,q},\indent\indent\,\,\,\,\,k_5:=2\mathfrak{C}^{\ast}_p\kappa_p\,p^{d_1}J_{5,p}+
2\mathfrak{C}^{\ast}_q\kappa_q\,q^{d_2}J_{5,q},$$
and
$$k_6:=p\left(\log\left(\frac{\mathfrak{C}^{\ast}_p\,p^{d_1-1}}{C'_{\epsilon}}\right)J_{1,p}+(d_1-1)(J_{3,p}-J_{2,p})+J_{4,p}\right)+
\indent\indent\indent\indent$$ $$\indent\indent\indent
+q\left(\log\left(\frac{\mathfrak{C}^{\ast}_q\,q^{d_2-1}}{C''_{\epsilon}}\right)J_{1,q}+(d_2-1)(J_{3,q}-J_{2,q})+J_{4,q}\right),$$
we are lead into the fulfillment of the equality (\ref{A02}) with explicit constants, as desired.
\end{proof}

\noindent{\bf Acknowledgement}.  {\it The third author is supported by:
The Puerto Rico Science, Technology and Research Trust, under agreement number 2022-00014.\\
\indent The authors are thankful to Dr. Luis Medina for his help in the computational calculations and verification of some of the constants in the paper.\\

\indent\\

\noindent{\bf Disclaimer.}  {\it This content is only the responsibility of the authors and does not necessarily represent the official views of The Puerto Rico Science, Technology and Research Trust}.\\

\end{document}